\newtheorem{lemme}{Lemma}
\newtheorem{theoreme}[lemme]{Theorem}
\newtheorem{prop}[lemme]{Proposition}
\newtheorem{coro}[lemme]{Corollary}
\newtheorem{theo}{Theorem}
\theoremstyle{definition}
\newtheorem{definition}[lemme]{Definition}
\theoremstyle{remark}
\newtheorem{rem}[lemme]{Remark}
\newtheorem{ex}[lemme]{Example}
\newcommand{\N}{\mathbf{N}}
\newcommand{\R}{\mathbf{R}}
\newcommand{\T}{\mathbf{T}}
\newcommand{\Q}{\mathbf{Q}}
\newcommand{\Z}{\mathbf{Z}}
\newcommand{\varep}{\varepsilon}
\newcommand{\Hom}{\operatorname{Homeo}}
\newcommand{\Leb}{\operatorname{Leb}}
\newcommand{\Diff}{\operatorname{Diff}}
\newcommand{\ud}{\,\operatorname{d}}
\newcommand{\Prb}{\mathcal{P}}
\newcommand{\card}{\operatorname{Card}}
\newcommand{\dist}{\operatorname{dist}}
\newcommand{\1}{\mathbf 1}
\newcommand{\Id}{\operatorname{Id}}
\newcommand{\im}{\operatorname{im}}
\newcommand{\Sp}{\mathbf{S}}
\newcommand{\ind}{{\boldsymbol{i}}}
\newcommand{\len}{\operatorname{len}}
\newcommand{\fat}{\operatorname{fat}}
\newcommand{\Ll}{\mathcal{L}}
\newcommand{\E}{\mathbf{E}}
\newcommand{\Dr}{D_{\text{rec}}}
\begin{document}

\begin{frontmatter}[classification=text]


\author[pag]{Pierre-Antoine Guih{\'e}neuf\thanks{Supported by a CAPES/IMPA grant.}}

\begin{abstract}
We study spatial discretizations of dynamical systems: is it possible to recover some dynamical features of a system from numerical simulations? Here, we tackle this issue for the simplest algorithm possible: we compute long segments of orbits with a fixed number of digits. We show that for every $r\ge 1$, the dynamics of the discretizations of a $C^r$ generic conservative diffeomorphism of the torus is very different from that observed in the $C^0$ regularity. The proof of our results involves in particular a local-global formula for discretizations, as well as a study of the corresponding linear case, which uses ideas from the theory of quasicrystals.
\end{abstract}
\end{frontmatter}


%
%

\section{Introduction}

This paper is concerned with the issue of numerical simulations of dynamical systems. Consider a discrete-time dynamical system $f$ on the torus $\T^n = \R^n/\Z^n$ endowed with Lebesgue measure\footnote{We will see in Appendix~\ref{AddendSett} a more general framework where our results remain true.}, and define some spatial discretizations of this system in the following way: let $(E_N)_{N\in\N}$ be the collection of uniform grids on $\T^n$
\[E_N = \left\{ \left(\frac{i_1}{N},\cdots,\frac{i_n}{N}\right)\in \R^n/\Z^n \middle\vert\ 1\le i_1,\cdots,i_n \le N\right\},\]
and take a Euclidean projection $P_N$ on the nearest point of $E_N$; in other words $P_N(x)$ is (one of) the point(s) of $E_N$ which is the closest from $x$. This projection allows one to define the discretizations of $f$.

\begin{definition}
The \emph{discretization} $f_N : E_N\to E_N$ of $f$ on the grid $E_N$ is the map $f_N = P_N\circ f_{|E_N}$.
\end{definition}

Such discretizations $f_N$ are supposed to reflect what happens when segments of orbits of the system $f$ are computed numerically: in the particular case $N=10^k$, the discretization models computations made with $k$ decimal places. The question we are interested in is then the following: can the dynamics of the system $f$ be inferred from the dynamics of (some of) its discretizations $f_N$?
\bigskip

Here, we will focus on a number associated with the combinatorics of the discretizations, called the degree of recurrence. As every discretization is a finite map $f_N : E_N\to E_N$, each of its orbits is eventually periodic. Thus, the sequence of sets $(f_N^k(E_N))_{k\in\N}$ (the order of discretization $N$ being fixed) is nested and eventually constant, equal to a set $\Omega(f_N)$ called the \emph{recurrent set}\footnote{Note that a priori, $(f_N)^k\neq (f^k)_N$.}. This set coincides with the union of the periodic orbits of $f_N$; it is the biggest subset of $E_N$ on which the restriction of $f_N$ is a bijection. Then, the \emph{degree of recurrence} of $f_N$, denoted by $\Dr(f_N)$, is the ratio between the cardinality of $\Omega(f_N)$ and the cardinality of the grid $E_N$.

\begin{definition}\label{DefDegree}
Let $E$ be a finite set and $\sigma : E\to E$ be a finite map on $E$. The \emph{recurrent set} of $\sigma$ is the union $\Omega(\sigma)$ of the periodic orbits of $\sigma$. The \emph{degree of recurrence} of the finite map $\sigma$ is the ratio\index{$D(f_N)$}
\[\Dr(\sigma) = \frac{\card\big(\Omega(\sigma)\big)}{\card(E)}.\]
\end{definition}

This degree of recurrence $\Dr(f_N)\in [0,1]$ represents the loss of information induced by the iteration of the discretization $f_N$ (for example, if $\Dr(f_N)=1$, then $f_N$ is a bijection). Also, a finite map with a degree of recurrence equal to 1 preserves the uniform measure on $E_N$, and thus can be considered as \emph{conservative}.
\bigskip

The goal of this paper is to study the behaviour of the degree of recurrence $\Dr(f_N)$ as $N$ goes to infinity and for a \emph{generic} dynamics $f$ of the torus $\T^n$, $n\ge 2$. More precisely, on every Baire space $\mathcal B$ it is possible to define a good notion of genericity: a property on elements of $\mathcal B$ will be said \emph{generic} if satisfied on at least a countable intersection of open and dense subsets of $\mathcal B$.

For our purpose, the spaces $\Diff^r(\T^n)$ and $\Diff^r(\T^n,\Leb)$ of respectively $C^r$-diffeomorphisms and Lebesgue measure preserving $C^r$-diffeomorphisms of $\T^n$ are Baire spaces for every $r\in[0,+\infty]$, when endowed with the classical metric on $C^r$-diffeomorphisms\footnote{For example, $d_{C^1}(f,g)=\sup_{x\in\T^n} d(f(x),g(x)) + \sup_{x\in\T^n} \|Df_x-Dg_x\|$.}. Elements of $\Diff^r(\T^n)$ will be called \emph{dissipative} and elements of $\Diff^r(\T^n,\Leb)$ \emph{conservative}. Also, the spaces $\Diff^0(\T^n)$ and $\Diff^0(\T^n,\Leb)$ of homeomorphisms will be denoted by respectively $\Hom(\T^n)$ and $\Hom(\T^n,\Leb)$.

We will also consider the case of generic expanding maps of the circle. Indeed, for any $r\in [1,+\infty]$, the space $\mathcal D^r(\Sp^1)$ of $C^r$ expanding maps\footnote{Recall that a $C^1$ map $f$ is \emph{expanding} if for every $x\in\Sp^1$, $|f'(x)|>1$.} of the circle $\Sp^1$, endowed with the classical $C^r$ topology, is a Baire space.
 
The study of generic dynamics is motivated by the phenomenon of resonance that appears on some very specific examples like that of the linear automorphism of the torus $(x,y)\mapsto (2x+y,x+y)$: as noticed by \'E.~Ghys in \cite{Ghys-vari}, the fact that this map is linear with integer coefficients forces the discretizations on the uniform grids to be bijections with a (very) small global order (see also \cite{MR1176587} or the introduction of \cite{Guih-These}). One can hope that this behaviour is exceptional; to avoid them, \'E.~Ghys proposes to study the case of generic maps.
\bigskip

To begin with, let us recall what happens for generic homeomorphisms. The following consequence of \cite{MR3027586} can be found in Section 5.2 of \cite{Guih-discr}.

\begin{theoreme}[Guih{\'e}neuf]
Let $f\in\Hom(\T^n)$ be a generic dissipative homeomorphism. Then
\[\Dr(f_N)\underset{N\to+\infty}{\longrightarrow} 0.\]
\end{theoreme}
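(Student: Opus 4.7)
\emph{Strategy and Baire setup.} The plan is to apply Baire's theorem in $\Hom(\T^n)$: for every $\varep>0$ I would produce an open dense subset $\U_\varep\subset\Hom(\T^n)$ on which $\limsup_N D(f_N)\le\varep$; then $\bigcap_{k\ge 1}\U_{1/k}$ is a dense $G_\delta$ on which $D(f_N)\to 0$. The mechanism making $D(f_N)$ small is a robust small-measure attractor, which a dissipative homeomorphism can be perturbed to possess.

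\emph{Construction of perturbations (density).} Given $f\in\Hom(\T^n)$ and $\delta>0$, I would build $\tilde f$ with $d_{C^0}(f,\tilde f)<\delta$ carrying an attracting basin of the following form: an open set $U\subset\T^n$, a point $p\in U$ and an integer $k_0$ with $\tilde f(\overline U)\subset U$, $\Leb(U)<\varep/3$ and $\Leb(\tilde f^{-k_0}(U))>1-\varep/3$. Concretely, in a small ball centered at $p$ I paste a strongly contracting ``plug'' that pulls a controlled fraction of the torus toward $p$; the volume this plug must displace is then pushed out along a long, thin tube so that $\tilde f$ remains a homeomorphism $C^0$-close to $f$. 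This is in the spirit of the classical constructions of dissipative homeomorphisms with prescribed attractor.

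\emph{From attractor to collapsing discretization.} For such $\tilde f$ I would show $\limsup_N D(\tilde f_N)\le\varep$. For $x\in E_N$ with $\tilde f^{k_0}(x)\in U$, a standard telescoping estimate using the uniform continuity of $\tilde f$ gives $\dist\bigl(\tilde f_N^{k_0}(x),\tilde f^{k_0}(x)\bigr)\to 0$ as $N\to\infty$ uniformly in $x$, so $\tilde f_N^{k_0}(x)$ lies in a slight enlargement $U'$ of $U$ as soon as $N$ is large enough. Consequently
\[\bigl|\tilde f_N^{k_0}(E_N)\bigr|\le \bigl|E_N\cap U'\bigr|+\bigl|\{x\in E_N:\tilde f^{k_0}(x)\notin U\}\bigr|\le(\varep+o(1))\,N^n,\]
and since $\Omega(\tilde f_N)\subset \tilde f_N^{k_0}(E_N)$ this yields $D(\tilde f_N)\le\varep$ for every $N\ge N_0(\tilde f,\varep)$.

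\emph{Openness and main obstacle.} The two properties $\tilde f(\overline U)\subset U$ and ``the open set $\{x:\tilde f^{k_0}(x)\in U\}$ has measure $>1-\varep/3$'' are both $C^0$-open (the first by compactness of $\overline U$, the second by upper semicontinuity of the Lebesgue measure of a nested open set combined with uniform continuity of the $k_0$-th iterate). A full neighborhood of $\tilde f$ thus inherits the previous conclusion with $\varep$ replaced by $2\varep$, so $\U_\varep$ (the union of these neighborhoods over all witnessing $\tilde f$) is open and dense. The main technical obstacle is precisely the density construction: since $\tilde f$ is a homeomorphism, contraction on the plug must be compensated by equal expansion elsewhere, and one must spread this expansion thinly enough that it neither creates a competing attracting region nor perturbs $f$ by more than $\delta$ in the $C^0$ metric. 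Once that is arranged, everything that follows is a bookkeeping of discretization errors.
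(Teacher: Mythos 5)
The paper does not prove this statement itself; it cites Section~5.2 of \cite{Guih-discr} and explains that the result reflects the ``attractor dynamics'' of a generic dissipative homeomorphism. Your proposal is in exactly that spirit, and most of its pieces are sound. The Baire scheme is standard. The telescoping error estimate is correct: for fixed $k_0$, the grid error at each step is $O(1/N)$, uniform continuity of $\tilde f$ gives $\sup_{x\in E_N}\dist\big(\tilde f_N^{k_0}(x),\tilde f^{k_0}(x)\big)\to 0$, so the image $\tilde f_N^{k_0}(E_N)$ is captured by a slight enlargement $U'$ of $U$ except for a small proportion of points, and $\Omega(\tilde f_N)\subset\tilde f_N^{k_0}(E_N)$ yields the cardinality bound. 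The $C^0$-openness argument also works: fix a compact $K\subset\tilde f^{-k_0}(U)$ with $\Leb(K)>1-\varep/3$; then $\tilde f^{k_0}(K)$ sits a positive distance inside $U$, and uniform continuity of $g\mapsto g^{k_0}$ in the $C^0$ topology propagates both conditions to a full neighbourhood of $\tilde f$.

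The genuine gap is the density step, and the specific mechanism you sketch does not carry the load. A contracting plug pasted in a ball around a single point $p$, with a ``long thin tube'' compensating the displaced volume, only directly controls the $\delta$-neighbourhood of the plug and the tube; for an arbitrary $f\in\Hom(\T^n)$ and a small $\delta$, whether the orbit of a far-away $x$ under a $\delta$-perturbation of $f$ ever enters that tube is dictated by the unperturbed $f$, about which nothing is assumed. So you cannot, with a $C^0$-size-$\delta$ perturbation, force $\Leb(\tilde f^{-k_0}(U))>1-\varep/3$ for a single small connected region $U$ placed at your convenience. What makes the density step work is a combinatorial argument on a fine cover of $\T^n$, as in the ``shredding''-type arguments behind \cite{MR3027586}: take a cover by small balls $B_1,\dots,B_m$ of diameter much smaller than $\delta$, fine enough that each $f(B_i)$ lies within $\delta/2$ of a ball $B_{j(i)}$; perturb $f$ by less than $\delta$ so that $\tilde f(\overline{B_i})\subset B_{j(i)}$, so that the dynamics of $\tilde f$ at scale $\delta$ factors through the finite map $i\mapsto j(i)$; and take $U$ to be the union of the balls indexed by the recurrent part of that finite map, which (after refining the cover and choosing $j$ suitably) covers an arbitrarily small fraction of indices while its $k_0$-step basin eventually exhausts $\T^n$ up to measure $\varep/3$. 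In particular the attracting set $U$ generically has many connected components scattered over the torus, not a single plug. With the density step replaced by such a cover-and-collapse construction, the rest of your argument goes through.
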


This theorem expresses that there is a total loss of information when a discretization of a generic homeomorphism is iterated. This is not surprising, as a generic dissipative homeomorphism has an ``attractor dynamics'' (see \cite{MR3027586}). The following result is the Corollary 5.24 of \cite{Guih-These} (see also Section 4.3 of \cite{Guih-discr} and Proposition 2.2.2 of \cite{Mier-dyna}).

\begin{theoreme}[Miernowski, Guih{\'e}neuf]\label{ThmGM}
Let $f\in\Hom(\T^n,\Leb)$ be a generic conservative homeomorphism. Then the sequence $\big(\Dr(f_N)\big)_{N\in\N}$ accumulates on the whole segment $[0,1]$.
\end{theoreme}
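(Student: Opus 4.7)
I will prove the theorem by a Baire category argument. Fix a countable dense set $\{\alpha_k\}_{k\in\N}$ of $[0,1]$ and a positive sequence $\eta_\ell\downarrow 0$. For each $(k,\ell,M)\in \N^3$, set
\[
\mathcal G_{k,\ell,M} = \bigl\{f\in \Hom(\T^n,\Leb) : \exists\, N\geq M,\ |D(f_N) - \alpha_k| < \eta_\ell \bigr\}.
\]
If each $\mathcal G_{k,\ell,M}$ is open and dense, then the countable intersection is a dense $G_\delta$, and for any $f$ in it $(D(f_N))_N$ accumulates on the dense set $\{\alpha_k\}$, hence on the whole of $[0,1]$. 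Openness is immediate: for fixed $N$, the map $f\mapsto f_N$ depends only on the finitely many values $f(x)$, $x\in E_N$, and is locally constant outside the closed nowhere-dense set where some $f(x)$ lies on a Voronoi boundary. Thus $\{f : |D(f_N)-\alpha_k|<\eta_\ell\}$ is open for each $N\geq M$, and $\mathcal G_{k,\ell,M}$ is a union of opens.

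\textbf{Density.} Given $f\in\Hom(\T^n,\Leb)$, $\alpha\in[0,1]$, $\varep>0$, and $M\in\N$, I need to find $g$ with $d_{C^0}(f,g)<\varep$ and $|D(g_N)-\alpha|$ small for some $N\ge M$. My strategy is to combine a Lax--Oxtoby--Ulam approximation with a local volume-preserving ``collapse''. First, by Lax's theorem in its Oxtoby--Ulam--Alpern form, $f$ is $C^0$-approximable within $\varep/2$ by a conservative $\tilde f$ such that $\tilde f_{N_0}$ is a prescribed permutation of $E_{N_0}$ for some $N_0\geq M$ that I may choose arbitrarily large; this already gives $D(\tilde f_{N_0})=1$, handling $\alpha=1$. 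For general $\alpha$, I refine the scale to $N=kN_0$ with $k$ large and select a subset $A\subset E_N$ of density close to $\alpha$ which is $\varep/2$-dense in $\T^n$. I then construct a conservative homeomorphism $h$ with $d_{C^0}(h,\Id)<\varep/2$ whose grid projection $y\mapsto P_N(h(y))$ sends $E_N$ onto $A$ and permutes $A$. Setting $g=h\circ \tilde f$ gives $d_{C^0}(f,g)<\varep$ and $g_N(E_N)\subseteq A$. A short bookkeeping argument, exploiting the fact that $h$ is built at a scale coarser than $N$, shows that the restriction $g_N|_{g_N(E_N)}$ is itself a bijection, so $\Omega(g_N)=g_N(E_N)$ is a subset of $A$ of density close to $\alpha$, yielding $|D(g_N)-\alpha|<\eta_\ell$ for the desired $\eta_\ell$.

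\textbf{Main obstacle.} The technical core is the construction of the homeomorphism $h$: a volume-preserving map, $C^0$-close to the identity, whose grid discretization is a prescribed many-to-one retraction of $E_N$ onto $A$ and a permutation on $A$. The mechanism, specific to the $C^0$ setting, is that a volume-preserving homeomorphism may squeeze several small Voronoi cells into arbitrarily thin tubes clustered near one chosen cell, pushing the excess volume into long narrow filaments that fill the complementary region. Concretely, I partition $\T^n$ into boxes of diameter $\varep/2$, each containing at least one point of $A$; inside each box, I define $h$ as a local conservative homeomorphism focusing every grid point into the Voronoi cell of the chosen $A$-point, using thin filaments to absorb the displaced volume. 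This ``thin-tube'' flexibility collapses in $C^r$ regularity for $r\geq 1$ (where bounded Jacobians prevent such squeezings), and the resulting rigidity is precisely what the rest of the paper exploits to exhibit the contrasting behaviour announced in the abstract.
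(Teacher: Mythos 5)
The Baire-category framing is appropriate, and this theorem is indeed proved in the cited references by a density-plus-genericity argument of roughly this shape. However, your density step has a genuine gap that the phrase ``a short bookkeeping argument'' does not repair. Writing $g = h\circ\tilde f$, you have $g_N(x) = P_N\big(h(\tilde f(x))\big)$ for $x\in E_N$, while your construction of $h$ only controls $P_N(h(y))$ for $y\in E_N$. Lax's theorem gives you that $\tilde f_{N_0} = P_{N_0}\circ\tilde f|_{E_{N_0}}$ is a permutation; it does \emph{not} give $\tilde f(E_N)\subset E_N$ (nor even $\tilde f(E_{N_0})\subset E_{N_0}$), so $\tilde f(x)$ is a generic point of the torus and nothing forces $h(\tilde f(x))$ into a Voronoi cell of a point of $A$. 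This cannot be fixed by extending the control of $h$ from $E_N$ to a full-measure neighbourhood: a Lebesgue-preserving homeomorphism $h$ cannot send a set of measure $>\alpha$ into $\bigcup_{a\in A}\operatorname{Vor}(a)$, which has measure $\approx\alpha<1$. Thus $g_N(E_N)\subseteq A$ is unjustified. Even granting it, the claim that $g_N$ is bijective on $g_N(E_N)$ is not automatic: $g_N|_A$ is a composition of the injection $\tilde f|_A$ (landing in some a priori uncontrolled subset of $E_N$) with the non-injective $P_N\circ h$, so you must additionally arrange $\tilde f(A)$ to avoid the collision locus of $P_N\circ h$, which you have not done.

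The standard way to avoid both problems is to perturb once, not twice: use an extension of Lax's theorem for non-bijective finite maps, of the form ``for any conservative $f$, any $\varepsilon>0$, any large $N$, and any $\sigma:E_N\to E_N$ with $d(\sigma(x),f(x))<\varepsilon$ for all $x\in E_N$, there is a conservative $g$ with $d_{C^0}(f,g)<\varepsilon$ and $g_N=\sigma$''. Then directly pick $\sigma$ with the right combinatorics: take $A\subset E_N$ of density $\approx\alpha$ which is $\varepsilon/2$-dense, let $\sigma|_A$ be a permutation of $A$ moving points by less than $\varepsilon$ towards $f$, and let $\sigma$ map each $x\in E_N\setminus A$ to a nearby point of $A$ that is still $\varepsilon$-close to $f(x)$; then $\Omega(\sigma)=A$ and $D(\sigma)\approx\alpha$. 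This removes the problematic interaction between the two discretizations. As a minor point, your openness claim is slightly overstated: $\{f:|D(f_N)-\alpha_k|<\eta_\ell\}$ need not be open at those $f$ for which some $f(x)$, $x\in E_N$, lies on a Voronoi boundary, so one should intersect with the dense open complement of that exceptional set (or replace $\mathcal G_{k,\ell,M}$ by its interior); the density argument then produces points off the exceptional set, and the Baire argument goes through.
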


Thus, the degree of recurrence of a generic \emph{conservative} homeomorphism accumulates on the biggest set on which it can a priori accumulate. Then, the behaviour of this combinatorial quantity depends a lot on the order $N$ of the discretization and not at all on the dynamics of the homeomorphism $f$. Moreover, as there exists a subsequence $(N_k)_{k\in \N}$ such that $\Dr(f_{N_k})\to_{k\to+\infty} 0$, there exist ``a lot'' of discretizations that do not reflect the conservative character of the homeomorphism.
\bigskip

In this paper, we study the asymptotic behaviour of the degree of recurrence in higher regularity. Our results can be summarized in the following theorem (see Corollary~\ref{CoroCoroArturJairo}, Theorem~\ref{limiteEgalZero} and Corollary~\ref{Tau0Dilat}).

\begin{theo}\label{TheoIntro}
Let $r\in[1,+\infty]$ and $f$ be either
\begin{itemize}
\item a generic dissipative $C^1$-diffeomorphism of $\T^n$;
\item a generic conservative $C^r$-diffeomorphism of $\T^n$;
\item a generic $C^r$ expanding map\footnote{This point was previously announced by P.P. Flockerman and O. E. Lanford in 2008 \cite{LanfExpo}, but remained unpublished (see also \cite{Flocker}).} of the circle $\Sp^1$;
\end{itemize}
Then
\[\Dr(f_N)\underset{N\to+\infty}{\longrightarrow} 0.\]
\end{theo}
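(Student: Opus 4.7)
The plan is to reduce the global quantity $D(f_N)$ to a local integral over $\T^n$ of a quantity depending only on the derivative $Df$ along the orbit, and then to analyse this local quantity via a careful study of the linear case. Concretely, I would first prove a \emph{local--global formula} expressing the asymptotic density of the $k$-th image set:
\[
\lim_{N\to+\infty}\frac{\card\big((f_N)^k(E_N)\big)}{\card(E_N)}=\int_{\T^n}\tau_k\big(Df_x,Df_{f(x)},\ldots,Df_{f^{k-1}(x)}\big)\ud x,
\]
where $\tau_k(A_1,\ldots,A_k)$ denotes the mean rate of injectivity of the linear cocycle $A_1,\ldots,A_k$ discretized on very fine lattices. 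Since $D(f_N)=\inf_k \card((f_N)^k(E_N))/\card(E_N)$, it suffices to prove that this integrand tends to $0$ with $k$ at Lebesgue-a.e.\ point $x$.

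The second step is the analysis of the linear rate. In the expanding and dissipative regimes there is a quick argument: whenever $|\det A|>1$, the discretization of $A$ is at most $|\det A|^{-1}$-dense, so $\tau_k(A,\ldots,A)\le|\det A|^{-k}\to 0$. This immediately handles $C^r$ expanding maps of $\Sp^1$ (where $|f'|>1$ everywhere), and for generic dissipative $C^1$-diffeomorphisms one checks by a transversality argument that the set $\{|\det Df|=1\}$ has zero Lebesgue measure. The delicate case is the conservative one, where $|\det Df_x|=1$ on all of $\T^n$; here the image $(P\circ A)(\Z^n)$ is a \emph{cut-and-project set}, and iterating yields a sequence of nested quasicrystals whose densities $\tau_k(A,\ldots,A)$ one must drive to zero. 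The core technical lemma is that for generic $A\in SL_n(\R)$, equidistribution arguments on the parameter torus force $\tau_k\to 0$; this is where quasicrystal theory genuinely enters.

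Finally, one combines these ingredients with a Baire genericity argument: a suitable jet-perturbation lemma, prescribing $Df$ along any fixed finite orbit while controlling the $C^r$-norm, shows that in each of the spaces $\Diff^r(\T^n,\Leb)$, $\Diff^1(\T^n)$ and $\mathcal{D}^r(\Sp^1)$, the set of $f$ for which the differentials along a.e.\ orbit land in the ``good'' set of linear maps contains a dense $G_\delta$. Combined with the local--global formula this yields $D(f_N)\to 0$ for generic $f$. The main obstacle is the conservative linear step: with $|\det A|=1$ there is no volume-loss shortcut, and the convergence $\tau_k(A,\ldots,A)\to 0$ has to be extracted from the arithmetic distribution of iterated cut-and-project sets, which is precisely where tools from the theory of aperiodic order become essential. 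A secondary but nontrivial obstacle is making the local--global formula uniform enough in $k$ that one can swap the limit in $N$ with the infimum in $k$; I would expect this to require control of the rate of injectivity via a Lipschitz-type estimate in the cocycle, and to be established through an explicit covering argument on small cubes in $\T^n$.
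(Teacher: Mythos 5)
Your overall architecture --- reduce $D(f_N)$ to the asymptotic rate of injectivity, prove a local--global formula expressing $\tau^k(f)$ as an integral over $\T^n$ of linear rates along the orbit, then analyse the linear case --- is exactly the paper's. The reduction to $\limsup_N D(f_N)\le\tau^\infty(f)$ is in fact simpler than you fear: since $D(f_N)=\lim_t\tau^t(f_N)\le\tau^t(f_N)$ for each fixed $t$, taking $\limsup_N$ and then $t\to\infty$ gives the bound directly, so no uniformity-in-$k$ is needed to swap limits. The worry you flag as a ``secondary obstacle'' is a non-issue.

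The two determinant shortcuts, however, do not work, and this is a real gap. For a dissipative diffeomorphism of $\T^n$ one always has $\int_{\T^n}|\det Df_x|\ud x=1$, hence by Jensen $\int\log|\det Df_x|\ud x\le 0$: along a typical orbit the products $\prod_i|\det Df_{f^i(x)}|$ are \emph{not} driven to $\infty$, so there is no $|\det A|^{-k}$-type decay to exploit. Knowing that $\{|\det Df_x|=1\}$ is Lebesgue-null (which is what transversality gives you) says nothing about the Birkhoff average of $\log|\det Df|$, which can perfectly well vanish on a set of full measure. This is precisely why the paper instead invokes the Avila--Bochi theorem (Theorem~\ref{ArturJairo}): for a generic $C^1$ map there is a compact $K$ with $\Leb(K)>1-\varep$ and $\Leb(f^m(K))<\varep$, a genuinely different and much stronger statement than a transversality condition on $\det Df$. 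The dissipative case does not even need the local--global formula; it follows directly from Equation~\eqref{intervLim} and Avila--Bochi.

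The expanding case has a subtler but equally serious problem. The relevant image set is a union over the full $d$-ary tree of preimage branches: $y'$ lies in $f_N^k(E_N)$ if \emph{some} chain of $k$ preimages discretizes onto it, not just a fixed one. If the density of a single branch is about $|\det A|^{-k}\approx d^{-k}$ but there are $d^k$ roughly independent branches, the naive estimate for the probability of a miss is $(1-d^{-k})^{d^k}\to e^{-1}$, which would give $\tau^k(f)\to 1-e^{-1}\neq 0$. So the single-chain bound $\tau_k\le|\det A|^{-k}$ is simply wrong for expanding maps. What saves the statement is the sub-criticality of the associated branching process, which the paper establishes by identifying $\E(Z_m)$ with $(\Ll^m 1)(y)$ where $\Ll$ is the transfer operator (Lemma~\ref{Espoir}), using the convergence of $\Ll^m 1$ to the SRB density (hence boundedness of $\E(Z_m)$) and a Markov-inequality argument on the Galton--Watson tree (Proposition~\ref{PropTruc}). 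None of this is in your sketch, and without it the expanding case remains open.

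Finally, for the conservative case you correctly identify the quasicrystal/cut-and-project machinery as essential, but your formulation in terms of a single generic $A\in SL_n(\R)$ iterated $k$ times is not what is needed: along an orbit one gets a genuine \emph{sequence} $Df_x,Df_{f(x)},\dots$, and the paper's Theorem~\ref{ConjPrincip} concerns generic sequences in $\ell^\infty(SL_n(\R))$, proven by a cube-intersection argument (Hajós-type, Lemmas~\ref{EstimTauxN} and~\ref{LemDeFin}) in $\R^{nk}$, not by studying powers of a fixed matrix. The transversality step you propose for transferring genericity from linear maps to diffeomorphisms (Lemma~\ref{PerturbCr}) is correctly identified, but the central linear theorem you are invoking as a black box is precisely the hardest part of the proof, and your sketch does not indicate how it would go.
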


The two last items of this theorem are in fact valid under much weaker assumptions than Baire genericity. They hold under a countable number of explicit positive codimension conditions over the sequence of the differentials of the map. More precisely, these items are valid as soon as Thom's transversality theorem holds (see Lemma \ref{PerturbCr}); in particular they remain true for \emph{prevalent} maps (see \cite{HUNT201043} for a definition of prevalence and results about the validity of Thom's theorem).

We must admit that the degree of recurrence gives only little information about the discretizations' dynamics. Our model does not take into account the fact that most of computations are done with floating point numbers. Actually, most of simulations of dynamical systems are performed using more sophisticated algorithms and allow to compute accurately stable manifolds, SRB measures etc. However, our work describes theoretically what happens in ``most of cases'' for the naive algorithm, which has been only little studied theoretically up to now.

Theorem~\ref{TheoIntro} becomes interesting when compared to the corresponding case in $C^0$ regularity: in the conservative setting, it indicates that the bad behaviours observed for generic conservative homeomorphisms should disappear in any higher regularity: the behaviour of the rate of injectivity is less irregular for generic conservative $C^r$ diffeomorphisms than for generic conservative homeomorphisms (compare Theorems~\ref{ThmGM} and \ref{TheoIntro}). This big difference suggests that the wild behaviours of the discretizations global dynamics observed in \cite{Guih-discr} for generic conservative homeomorphisms may not appear in higher regularities.

More precisely, to any discretization $f_N$ and any $x\in\T^n$ one can associate the measure $\mu_x^{f_N}$ which is the uniform measure on the $f_N$-periodic orbit in which the positive orbit of $P_N(x)$ eventually falls. Oxtoby-Ulam theorem states that a generic conservative homeomorphism is ergodic \cite{Oxto-meas}, and it has been conjectured by A. Katok that this result is still true in $C^1$ regularity. Thus, one can expect the measures $\mu_x^{f_N}$ tend to Lebesgue measure for ``typical'' $x\in\T^n$.

In \cite{Guih-discr} it is proved that for a generic conservative homeomorphism $f$ and any $x\in\T^n$, these measures  $\mu_x^{f_N}$ accumulate (as $N$ goes to infinity) on the whole set of $f$-invariant measures; in \cite{Gui15c}, it is proved that the same holds for generic conservative $C^1$-diffeomorphisms. Hence, in both cases, the ergodic behaviour of a single orbit highly depends on the discretization order and do not tend to the ``physical'' dynamics of the map. For generic conservative homeomorphisms, the same result holds for the measures $\mu^{f_N}$, which are the averages of the measures $\mu_x^{f_N}$ over $x\in E_N$. However, some simulations suggest that this should no longer be true for generic conservative $C^1$-diffeomorphisms: the measures $\mu^{f_N}$ seem to tend to Lebesgue measure (see \cite{Gui15c}). From this one  can conjecture the following: the global dynamics of discretizations of generic conservative homeomorphism depends a lot on the order of discretization, but the global dynamics of discretizations of generic conservative $C^1$-diffeomorphism tends to the physical dynamics of the diffeomorphism (the dynamics of Lebesgue almost every point).

A particular case of this conjecture is answered positively by Theorems~\ref{ThmGM} and \ref{TheoIntro}. This can be seen as an indication that the measures $\mu^{f_N}$ indeed tend to Lebesgue measure in the $C^r$ generic case, $r\ge 1$.

Despite this, Theorem~\ref{TheoIntro} shows that while iterating the discretizations of a generic conservative diffeomorphism, a great amount of information is lost. Although $f$ is conservative, its discretizations tend to behave like dissipative maps. This can be compared with the work of P.~Lax \cite{MR0272983} (see also lemma 15 of \cite{Oxto-meas}): for any conservative homeomorphism $f$, there is a bijective finite map arbitrarily close to $f$. Theorem~\ref{TheoIntro} states that for a generic conservative $C^r$ diffeomorphism, the discretizations never possess this property.

\bigskip

However, the main interest of Theorem~\ref{TheoIntro} certainly lies in the techniques used to prove it: we will link global and local behaviours of the discretizations, thus reduce the proof to that of a linear statement.

As it can be obtained as the decreasing limit of finite time quantities, the degree of recurrence is maybe the easiest combinatorial invariant to study: we will deduce its behaviour from that of the rate of injectivity.

\begin{definition}\label{DefTauxDiffeo}
Let $n\ge 1$, $f:\T^n\to \T^n$ an endomorphism of the torus and $t\in\N$. The \emph{rate of injectivity} in time $t$ and for the order $N$ is the quantity\index{$\tau_N^t$}
\[\tau^t(f_N) = \frac{\card\big((f_N)^t(E_N)\big)}{\card(E_N)}.\]
Then, the \emph{upper rate of injectivity} of $f$ in time $t$ is defined as\index{$\tau^t$}
\begin{equation}\label{EqTauT}
\tau^t(f) = \limsup_{N\to +\infty} \tau^t(f_N),
\end{equation}
and the \emph{asymptotic rate of injectivity} of $f$ is 
\[\tau^\infty(f) = \lim_{t\to +\infty} \tau^t(f)\]
(as the sequence $(\tau^t(f))_t$ is decreasing, the limit is well defined).
\end{definition}

The link between the degree of recurrence and the rates of injectivity is made by the trivial formula:
\[\Dr(f_N) = \lim_{t\to+\infty}\tau^t(f_N).\]
Furthermore, for a fixed $N$, the sequence $(\tau^t(f_N))_t$ is decreasing in $t$, so $\Dr(f_N) \le \tau^t(f_N)$ for every $t\in\N$. Taking the upper limit in $N$, we get
\[\limsup_{N\to +\infty} \Dr(f_N) \le \tau^t(f)\]
for every $t\in\N$, so considering the limit $t\to+\infty$, we get
\begin{equation}\label{intervLim}
\limsup_{N\to +\infty} \Dr(f_N) \le \tau^\infty(f).
\end{equation}
In particular, if we have an upper bound on $\tau^\infty(f)$, this will give a bound on $\limsup_{N\to+\infty} \Dr(f_N)$. This reduces the proof of Theorem~\ref{TheoIntro} to the study of the asymptotic rate of injectivity $\tau^\infty(f)$.
\bigskip

For generic dissipative $C^1$-diffeomorphisms, the fact that the sequence $\big(\Dr(f_N)\big)_N$ converges to 0 (Theorem~\ref{TheoIntro}) is an easy consequence of Equation~\eqref{intervLim} and of a theorem of A.~Avila and J.~Bochi (Theorem~\ref{ArturJairo}, extracted from \cite{MR2267725}).

For conservative diffeomorphisms and expanding maps, the study of the rates of injectivity will be the opportunity to understand the local behaviour of the discretizations: we will ``linearize'' the problem and reduce it to a statement about generic sequences of linear maps.

Let us first define the corresponding quantities for linear maps: as in general a linear map does not send $\Z^n$ into $\Z^n$, we will approach it by a discretization. For any $A\in GL_n(\R)$ and any $x\in\Z^n$, $\widehat A(x)$ is defined as the point of $\Z^n$ which is the closest from $A(x)$. Then, for any sequence $(A_k)_k$ of linear maps, the \emph{rate of injectivity} $\tau^k(A_1,\cdots,A_k)$ is defined as the density of the set $(\widehat A_k\circ\cdots\circ\widehat A_1)(\Z^n)$ (see Definition~\ref{DefTaux}):
\[\tau^k(A_1,\cdots,A_k) = \limsup_{R\to +\infty} \frac{\card \big((\widehat{A_k}\circ\cdots\circ\widehat{A_1})(\Z^n) \cap B(0,R)\big)}{\card \big(\Z^n \cap B(0,R)\big)}\in]0,1],\]
and the \emph{asymptotic rate of injectivity} $\tau^\infty((A_k)_k)$ is the limit of $\tau^k(A_1,\cdots,A_k)$ as $k$ tends to infinity. These definitions are made to mimic the corresponding definitions for diffeomorphisms. The following statement asserts that that the rates of injectivity of a generic map are obtained by averaging the corresponding quantities for the differentials of the diffeomorphism, and thus makes the link between local and global behaviours of the discretizations (Theorem~\ref{convBisMieux}).

\begin{theo}\label{LocGlobIntro}
Let $r\in [1,+\infty]$, and $f\in \Diff^r(\T^n)$ (or $f\in \Diff^r(\T^n,\Leb)$) be a generic diffeomorphism. Then $\tau^k(f)$ is well defined (that is, the limit superior in \eqref{EqTauT} is a limit) and satisfies:
\[\tau^k(f) = \int_{\T^n} \tau^k\left(Df_x, \cdots, D f_{f^{k-1}(x)}\right) \ud \Leb(x).\]
\end{theo}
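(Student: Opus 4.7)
The plan is to reduce this global statement to a collection of local statements about discretized linear maps, exploiting the fact that on small scales $f^k$ is well approximated by the composition of its differentials along orbits. I would work with two well-separated scales: a mesoscopic scale $\delta$, small compared to the modulus of continuity of $Df,\ldots,Df^{k-1}$, and the microscopic scale $1/N$, chosen much smaller than $\delta$.

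Concretely, I would cover $\T^n$ by a partition into essentially disjoint cubes $(C_j)$ of side length $\delta$ and centres $x_j$, and observe that on each $C_j$ the iterate $f^i$ differs from the affine map $y\mapsto f^i(x_j)+Df^i_{x_j}(y-x_j)$ by a $C^0$ error of order $\delta\,\omega(\delta)$, where $\omega$ is a modulus of continuity of the $Df^i$. Consequently, for all but a fraction $o_\delta(1)$ of the grid points in $C_j$, the orbit $y,f_N(y),\ldots,f_N^k(y)$ coincides, up to a translation sending $x_j$ onto a point of the grid, with the orbit of the corresponding point under the composition of discretized linear maps $\widehat{A}_k(x_j)\circ\cdots\circ\widehat{A}_1(x_j)$ where $A_i(x_j) = Df_{f^{i-1}(x_j)}$. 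Summing the local contributions and using the very definition of $\tau^k$ for linear maps yields
\[
\tau^k(f_N) = \sum_j \frac{\card(C_j\cap E_N)}{\card(E_N)}\,\tau^k\bigl(A_1(x_j),\ldots,A_k(x_j)\bigr)+\varep(N,\delta),
\]
with $\varep(N,\delta)\to 0$ as $N\to\infty$ at $\delta$ fixed, and then $\delta\to 0$. The main term is a Riemann sum for the integral in the statement, so taking the limits in this order would simultaneously upgrade the $\limsup$ in Definition~\ref{DefTauxDiffeo} to a genuine limit and identify it with the claimed integral.

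The main obstacle is that the function $\Phi:x\mapsto \tau^k(Df_x,\ldots,Df_{f^{k-1}(x)})$ is a priori only upper semi-continuous, and the linear rate $\tau^k(A_1,\ldots,A_k)$ is itself defined as a limsup over balls of increasing radius; convergence of the Riemann sum to a true integral is therefore not automatic, and this is where genericity enters. For a generic diffeomorphism $f$, the $k$-tuple of differentials along a $\Leb$-typical orbit must avoid the bad set of matrix tuples at which $\tau^k$ is discontinuous or fails to be a limit. I would establish this by perturbing $f$ to put the $k$-jet map $x\mapsto(Df_x,\ldots,Df_{f^{k-1}(x)})$ into general position with respect to the bad set, using a Kupka--Smale style transversality argument and the description of the bad set that presumably comes from the quasicrystal techniques announced in the abstract. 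This genericity step is the technical heart; the local-to-global bookkeeping sketched above, once the linear rates are known to behave well, is then essentially routine but still requires uniform control of the linearization error over all cubes simultaneously.
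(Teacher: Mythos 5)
Your proposal follows essentially the same route the paper takes for Theorem~\ref{convBisMieux}: a mesoscopic linearization scale between the macroscopic dynamics and the microscopic grid, a Riemann-sum reduction to the linear rates $\tau^k(Df_x,\ldots,Df_{f^{k-1}(x)})$, and a Thom-transversality genericity step (the paper's Lemma~\ref{PerturbCr}) to keep the $k$-tuple of differentials away from a bad set of matrix tuples. You are also right that the structure of the bad set and the uniformity of the convergence there constitute the real technical content.

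What you underestimate is the ``essentially routine'' bookkeeping, which in the paper conceals three separate quantitative genericity statements packaged into Lemma~\ref{LemTauxExpand} (and its diffeomorphism variant Lemma~\ref{LemTauxDiff2}). First, the rate $\tau^k$ for linear maps is computed on $\Z^n$, whereas on the cube $C_j$ the relevant lattice is translated by the fractional part of $Nx_j$ and $N f^i(x_j)$; one must show the density of the image set is essentially invariant under affine translations of the $A_i$ (estimate~\eqref{EqLem22}), which is not automatic and fails on a positive-codimension exceptional set. Second, approximating $f$ by its linearization at $C^0$ error $\varep\rho$ is not enough by itself: two points within $\varep\rho$ can still round to different grid points. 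One needs to bound the proportion of points whose image under the (affine) linearization falls within $\varep\rho$ of a rounding boundary $(\Z^n)'/N$; this is estimate~\eqref{EqLem222}, again a genericity statement about the tuple of differentials. Third, the radius $R_0$ beyond which the density is within $1/\ell'$ of its limit must be chosen \emph{uniformly} over all tuples at distance $\ge\eta'$ from the bad submanifolds $V_q$; this is what lets you separate the choices of $\delta$ and $N$. The paper obtains all three simultaneously via the model-set formalism (Lemma~\ref{ppe}) and a quantitative, uniform version of Weyl's equidistribution criterion for toral rotations away from rational hyperplanes (Lemma~\ref{Weyl}). Without these, the claim that all but a fraction $o_\delta(1)$ of grid points behave like their linearized counterpart has no justification; as it stands your sketch treats that fraction as automatically small, which it is not.
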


The same kind of result holds for generic expanding maps (see Theorem~\ref{TauxExpand}).

The proof of this theorem is quite technical but its general idea is very simple: apply an order 1 Taylor expansion and use some continuity results of the map $\tau^k$. These continuity results are proved using the model set formalism which will be discussed soon.

As noticed by Lanford in \cite{Lanf-inf}, ``\emph{this problem \emph{[of discretization]} reminds me quite a lot the notoriously difficult one of deriving non equilibrium statistical mechanics from atomic physics}''. As suggested by Hilbert in the statement of his $6^{\text{th}}$ problem \cite{MR1557926}, a fruitful approach is to study what happens at an intermediate scale between atomic or microscopic one (which here corresponds to the scale of the grid) and the macroscopic one (which here is the scale where we see the dynamics of the map)\footnote{For examples of recent progresses in this related field, see for instance \cite{MR3455156} and references therein.}. This scale is called mesoscopic and for us it will be the scale at which the map $f$ is almost affine: we will study discretizations of sequences of linear maps.
\bigskip

For conservative diffeomorphisms, the core of the proof of Theorem~\ref{TheoIntro} is the study of the rate of injectivity of generic sequences of matrices with determinant 1, that we will conduct in Section~\ref{SecLin}. Indeed, applying Theorem~\ref{LocGlobIntro} which links the local and global behaviours of the diffeomorphism, together with a transversality result (Lemma \ref{PerturbCr}), we reduce the proof of Theorem~\ref{TheoIntro} to the main result of Section~\ref{SecLin} (see Theorem~\ref{ConjPrincip} for the statement we actually need to prove Theorem~\ref{TheoIntro}).

\begin{theo}\label{ConjPrincipIntro}
For a generic sequence of matrices $(A_k)_{k\ge 1}$ of $\ell^\infty(SL_n(\R))$, we have
\[\tau^\infty\big( (A_k)_{k\ge 1}\big) = 0.\]
The same conclusion holds for a sequence of random iid matrices (Corollary \ref{CoroiidMat}) and for the iterations of a single matrix (Corollary \ref{CoroMemMat}) in both groups $SL_n(\R)$ and $O_n(\R)$.
\end{theo}

This linear statement has many consequences. First, it has nice applications to image processing. For example --- the result remains true for a generic sequence of \emph{isometries} ---, it says that if we apply a naive algorithm, the quality of a numerical image will be necessarily deteriorated by rotating this image many times by a generic sequence of angles. This gives an alternative proof of the result of \cite{Gui15b}\footnote{Which for its part is based on a Minkowski theorem for almost periodic patterns, see \cite{MR3690663}.} that may lead to new research directions for the field which studies discretizations of isometries of $\R^n$. For example, it could be used to set new discrete rotation algorithms, or to generalize the study of the local structure of the image sets to multiple iterations. See for instance \cite{MR2162013,thibault:tel-00596947, nouvel:tel-00444088, A1996_1075, ThesePluta}. Theorem~\ref{ConjPrincipIntro} is also the central technical result of \cite{Gui15c}, where it is combined with perturbation results in $C^1$ topology\footnote{Connecting lemma, ergodic closing lemma and some ad hoc perturbation techniques.} to study the physical measures of discretizations of generic conservative $C^1$-diffeomorphisms. 

The proof of Theorem~\ref{ConjPrincipIntro} is the most difficult and original part of this article. It uses the nice formalism of model sets, developed initially for the study of quasicrystals. Unfortunately, the classical theory does not apply here: the common definition of a model set includes a hypothesis of injectivity of a projection, which is not satisfied in the present setting\footnote{For complements about this more general class of model sets, see \cite{Gui15d}.} (as this non injectivity is the very source of the discretizations' loss of injectivity). However, using an equidistribution property, the model set viewpoint allows to get a geometric formula for the computation of the rate of injectivity of a generic sequence of matrices (Proposition~\ref{CalculTauxModel}): the rate of injectivity of a generic sequence $A_1,\cdots,A_k$ of matrices of $SL_n(\R)$ can be expressed in terms of areas of intersections of cubes in $\R^{nk}$. This formula, by averaging what happens in the image sets $(\widehat A_k\circ\cdots\circ\widehat A_1)(\Z^n)$, reflects the global behaviour of these sets. Also, it transforms the iteration into a passage in high dimension. This allows to prove Theorem~\ref{ConjPrincipIntro}, using quite technical geometric considerations about the volume of intersections of cubes, without having to make ``clever'' perturbations of the sequence of matrices (that is, the perturbations made a each iteration are chosen independently from that made in the past or in the future).

Interestingly, Lanford notes in \cite {Lanf-inf} that ``\emph{The problem \emph{[of discretization]} can probably be made much easier by the judicious introduction of a stochastic element in the microscopic evolution.[...] I think this is cheating.}'' Indeed, the equidistribution property we will crucially use during the proofs is equivalent to the uniform distribution of roundoff errors; in that sense it can be said that discretizations behave like random perturbations. This idea will be developed in a forthcoming paper in collaboration with M.~Monge \cite{GuihMonge}, in which we will use crucially the ideas of the present article to study the ergodic global middle-term behaviour of the discretizations of generic expanding maps.
\bigskip

The end of this paper is devoted to the results of the simulations we have conducted about the degree of recurrence of $C^1$-diffeomorphisms and expanding maps; it shows that in practice, the degree of recurrence tends to 0, at least for the examples of diffeomorphisms we have tested.
\bigskip

Recall that we will see in Appendix~\ref{AddendSett} that the quite restrictive framework of the torus $\T^n$ equipped with the uniform grids can be generalized to arbitrary manifolds, provided that the discretizations grids behave locally (and almost everywhere) like the canonical grids on the torus.
\bigskip

To finish, we state some questions related to Theorem~\ref{TheoIntro} that remain open.
\begin{itemize}
\item What is the behaviour of the degree of recurrence of discretizations of generic $C^r$-expanding maps of the torus $\T^n$, for $n\ge 2$ and $r>1$? In the view of Theorem~\ref{TheoIntro}, we can conjecture that this degree of recurrence tends to 0. To prove it we would need a generalization of Lemma~\ref{Espoir} to higher dimensions.
\item What is the behaviour of the degree of recurrence of discretizations of generic dissipative $C^r$-diffeomorphisms of the torus $\T^n$ for $r>1$? This question could reveal quite hard, as it may require some perturbation results in the $C^r$ topology for $r>1$.
\end{itemize}

\section{Discretizations of sequences of linear maps}\label{SecLin}

We begin by the study of the linear case, corresponding to the ``local behaviour'' of $C^1$ maps. We first define the linear counterpart of discretizations.

\begin{definition}\label{DefDiscrLin}
The map $P : \R\to\Z$\index{$P$} is defined as a projection from $\R$ onto $\Z$. More precisely, for $x\in\R$, $P(x)$ is the unique\footnote{The choice of where the inequality is strict or large is arbitrary.} integer $k\in\Z$ such that $k-1/2 < x \le k + 1/2$. This projection induces the map\index{$\pi$}
\[\begin{array}{rrcl}
\pi : & \R^n & \longmapsto & \Z^n\\
 & (x_i)_{1\le i\le n} & \longmapsto & \big(P(x_i)\big)_{1\le i\le n}
\end{array}\]
which is an Euclidean projection on the lattice $\Z^n$. For $A\in M_n(\R)$, we denote by $\widehat A$ the \emph{discretization}\index{$\widehat A$} of $A$, defined by 
\[\begin{array}{rrcl}
\widehat A : & \Z^n & \longrightarrow & \Z^n\\
 & x & \longmapsto & \pi(Ax).
\end{array}\]
\end{definition}

This definition allows us to define the rate of injectivity for sequences of linear maps.

\begin{definition}\label{DefTaux}
For a discrete subset $E\subset \R^n$, we denote\footnote{By definition, $B_R = B_\infty(0,R)$, where the considered norm is $\|x\|_\infty = \max(|x_1|,\cdots,|x_n|)$. Our results do not depend on the choice of the norm, but this one simplifies the computations.}
\[D_R = \frac{\card \big(E \cap B_R\big)}{\card \big(\Z^n \cap B_R\big)}.\]

Let $A_1,\cdots,A_k \in GL_n(\R)$. The \emph{rate of injectivity} of $A_1,\cdots,A_k$ is the asymptotic density\footnote{In the sequel we will see that the $\limsup$ is in fact a limit for a generic sequence of matrices. It can also be shown that it is a limit for any sequence of matrices, using the formalism developed hereafter and the classification of closed subgroups of $\T^n$.}\index{$\tau^k$}
\[\tau^k(A_1,\cdots,A_k) = \limsup_{R\to +\infty} D_R\Big((\widehat{A_k}\circ\cdots\circ\widehat{A_1})(\Z^n)\Big),\]
and for an infinite sequence $(A_k)_{k\ge 1}$ of invertible matrices, as the previous quantity is decreasing in $k$, we can define the \emph{asymptotic rate of injectivity}\index{$\tau^\infty$}
\[\tau^\infty\big((A_k)_{k\ge 1}\big) = \lim_{k\to +\infty}\tau^k(A_1,\cdots,A_k)\in[0,1].\]
\end{definition}

\begin{figure}[t]
\begin{minipage}[c]{.33\linewidth}
	\includegraphics[width=\linewidth, trim = 1.5cm .5cm 1.5cm .5cm,clip]{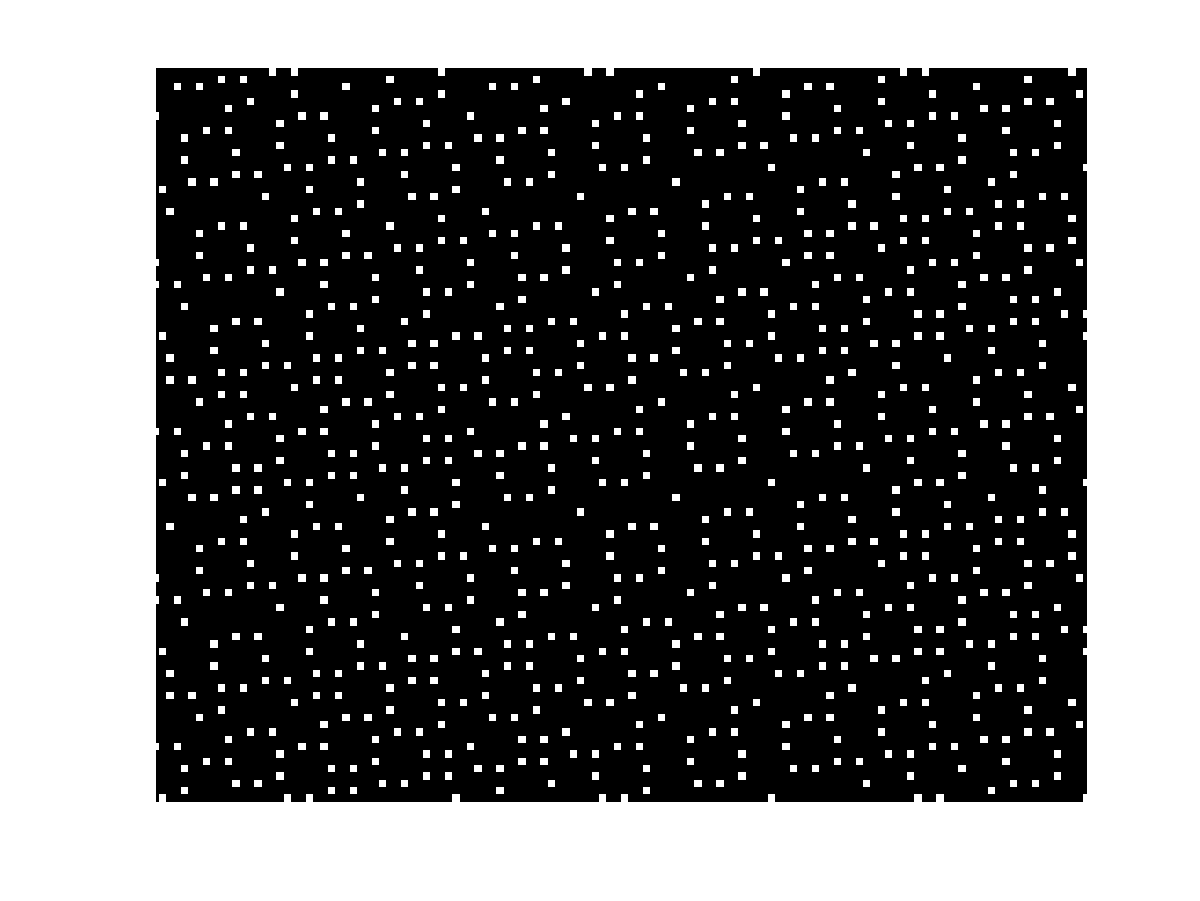}
\end{minipage}\hfill
\begin{minipage}[c]{.33\linewidth}
	\includegraphics[width=\linewidth, trim = 1.5cm .5cm 1.5cm .5cm,clip]{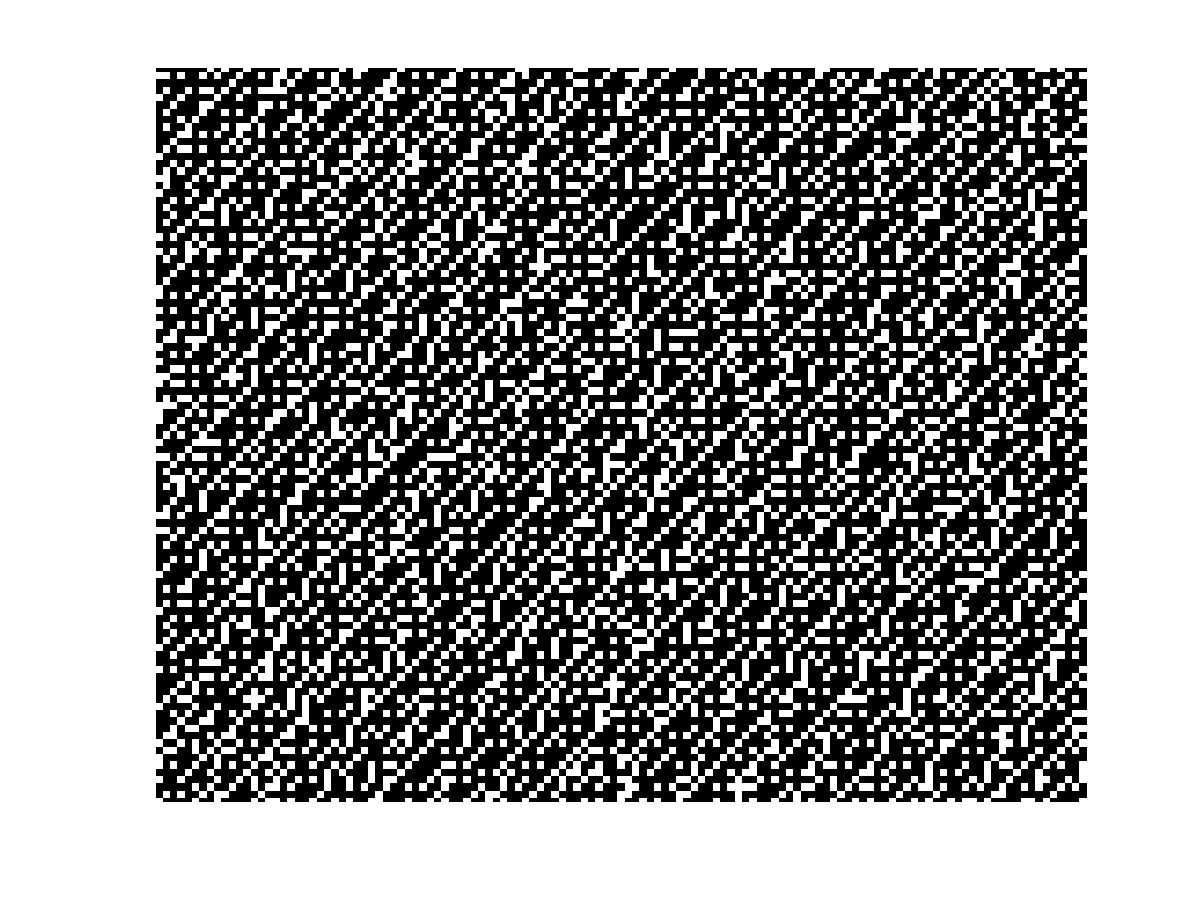}
\end{minipage}\hfill
\begin{minipage}[c]{.33\linewidth}
	\includegraphics[width=\linewidth, trim = 1.5cm .5cm 1.5cm .5cm,clip]{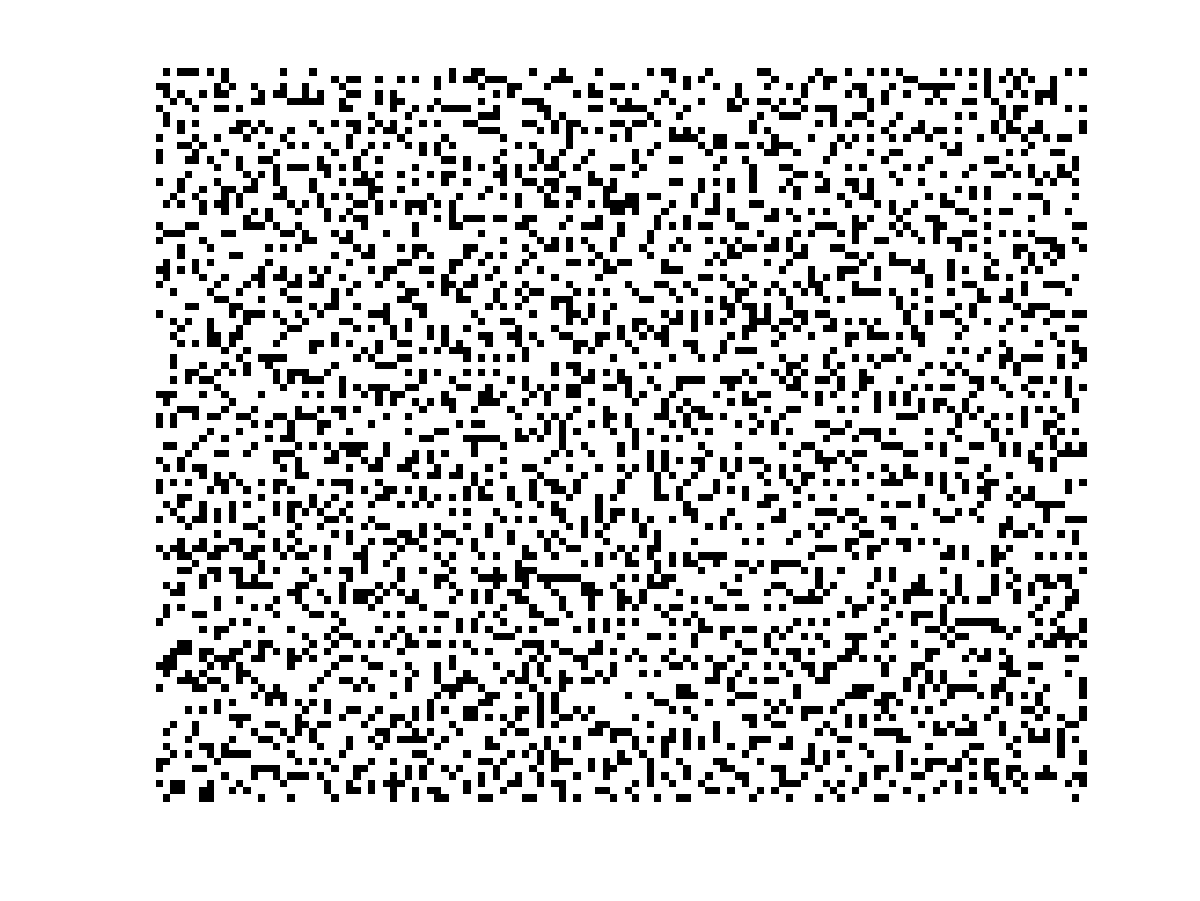}
\end{minipage}
\caption[Successive images of $\Z^2$ by discretizations of random matrices]{Successive images of $\Z^2$ by discretizations of random matrices in $SL_2(\R)$, a point is black if it belongs to $(\widehat{A_k}\circ\cdots\circ\widehat{A_1})(\Z^2)$. The $A_i$ are chosen randomly and independently, using the singular value decomposition: they are chosen among the matrices of the form $R_\theta D_t R_{\theta'}$, with $R_\theta$ the rotation of angle $\theta$ and $D_t$ the diagonal matrix $\operatorname{Diag}(e^t,e^{-t})$, the angles $\theta$, $\theta'$ being chosen uniformly in $[0,2\pi]$ and $t$ uniformly in $[-1/2,1/2]$. From left to right and top to bottom, $k=1,\, 3,\, 20$.}\label{ImagesSuitesMat}
\end{figure}

For a typical example of the sets $(\widehat{A_k}\circ\cdots\circ\widehat{A_1})(\Z^2)$, see Figure~\ref{ImagesSuitesMat}. Finally, we define a topology on the set of sequences of linear maps.

\begin{definition}\label{DefTopoSL}
We fix once for all a norm $\|\cdot\|$ on $M_n(\R)$. For a bounded sequence $(A_k)_{k\ge 1}$ of matrices of $SL_n(\R)$, we set\index{$\|(A_k)_k\|_\infty$}
\[\|(A_k)_k\|_\infty = \sup_{k\ge 1} \|A_k\|.\]
In other words, we consider the space $\ell^\infty(SL_n(\R))$ of uniformly bounded sequences of matrices of determinant 1 endowed with this classical metric.
\end{definition}

We take advantage of the rational independence between matrix coefficients of a generic sequence to obtain geometric formulas for the computation of the rate of injectivity. The tool used is the formalism of \emph{model sets}\footnote{Also called \emph{cut-and-project} sets.} (see for example \cite{Moody25} or \cite{MR2876415} for surveys about model sets, see also \cite{Gui15d} for the application to the specific case of discretizations of linear maps). Unfortunately, even if this viewpoint is very fruitful for the understanding of discretizations of linear maps, the classical theory of model sets do not apply here and we have to develop appropriate techniques for our specific setting.
\bigskip

Let us summarize the different notations we will use throughout this section. We will denote by $0^k$\index{$0^k$} the origin of the space $\R^k$, and $W^k = ]-1/2,1/2]^{nk}$ (unless otherwise stated). In this section, we will denote $B_R = B_\infty(0,R)$ and $D_c(E)$\index{$D_c$} the density of a ``continuous'' set $E\subset \R^n$, defined as (when the limit exists)
\[D_c(E) = \lim_{R\to+\infty} \frac{\Leb(B_R\cap E)}{\Leb(B_R)},\]
while for a discrete set $E\subset \R^n$, the notation $D_d(E)$\index{$D_d$} will indicate the discrete density of $E$, defined as (when the limit exists)
\[D_d(E) = \lim_{R\to+\infty} \frac{\card(B_R\cap E)}{\card(B_R\cap \Z^n)},\]

We will consider $(A_k)_{k\ge 1}$ a sequence of matrices of $SL_n(\R)$, and denote\index{$\Gamma_k$}
\[\Gamma_k = (\widehat{A_k}\circ\dots\circ\widehat{A_1}) (\Z^n).\]
Also, $\Lambda_k$ will be the lattice $M_{A_1,\cdots,A_k} \Z^{n(k+1)}$, with
\begin{equation}\label{DefMat1}
M_{A_1,\cdots,A_k} = \left(\begin{array}{ccccc}
A_1 & -I_n &        &        & \\
    & A_2  & -I_n   &        & \\
    &      & \ddots & \ddots & \\
    &      &        & A_k    & -I_n\\
    &      &        &        & I_n
\end{array}\right)\in M_{n(k+1)}(\R),
\end{equation}
and $\widetilde \Lambda_k$ will be the lattice $\widetilde M_{A_1,\cdots,A_k} \Z^{nk}$, with
\begin{equation}\label{DefMat2}
\widetilde M_{A_1,\cdots,A_k} = \left(\begin{array}{ccccc}
A_1 & -I_n &        &         & \\
    & A_2  & -I_n   &         & \\
    &      & \ddots & \ddots  & \\
    &      &        & A_{k-1} & -I_n\\
    &     &        &          & A_k
\end{array}\right)\in M_{nk}(\R).
\end{equation}
Finally, we will denote
\begin{equation}\label{DeftauBar}
\overline\tau^k(A_1,\cdots,A_k) = D_c\left( W^{k+1} + \Lambda_k \right)
\end{equation}
the \emph{mean rate of injectivity in time $k$} of $A_1,\cdots,A_k$, and 
\[\overline\tau^\infty\big((A_k)_k\big) = \lim_{k\to +\infty}\overline\tau^k(A_1,\cdots,A_k).\]

\subsection[A geometric viewpoint to compute the rate of injectivity]{A geometric viewpoint to compute the rate of injectivity in arbitrary times}

We begin by motivating the introduction of model sets by giving an alternative construction of the image sets $(\widehat{A_k}\circ\dots\circ\widehat{A_1}) (\Z^n)$ using this formalism.

Let $A_1,\cdots,A_k\in M_n(\R)$. A point $x\in\Z^n$ belongs to $\widehat{A_1}(\Z^n)$ if and only if there exists $y\in\Z^n$ such that $A_1y \in x+W^1$, that is $A_1y-x \in W^1$. In other words, denoting
\[\lambda = M_{A_1}\begin{pmatrix} y \\ x \end{pmatrix} = \begin{pmatrix}
A_1 y-x \\ x \end{pmatrix},\]
a point $x\in\Z^n$ belongs to $\widehat{A_1}(\Z^n)$ if and only if there exists $\lambda\in M_{A_1}\Z^{2n}$ such that $p_1(\lambda) = A_1 y-x \in W^1$ and $p_2(\lambda) = x$ (where $p_1$ is the projection on the $n$ first coordinates and $p_2$ the projection on the $n$ last). Thus, 
\[\Gamma_1 = \big\{p_2(\lambda)\mid \lambda\in \Lambda_1,\, p_1(\lambda)\in W^1\big\}\nonumber.\]

Iterating this idea, we get that
\begin{equation}\label{CalcGamma}
\Gamma_k = \big\{p_2(\lambda)\mid \lambda\in \Lambda_k,\, p_1(\lambda)\in W^k\big\} = p_2\Big(\Lambda_k \cap \big(p_1^{-1}(W^k)\big)\Big),
\end{equation}
with $p_1$ the projection on the $nk$ first coordinates and $p_2$ the projection on the $n$ last coordinates. This allows us to see the set $\Gamma_k$ as a model set.
\smallskip

\noindent\begin{minipage}[c]{.55\linewidth}
\hspace{1em} In general, a model set is a set of the form $p_2(\Lambda \cap (p_1^{-1}(W)))$, with $\Lambda$ a lattice of $\R^{m+n}$, $p_1 : \R^{m+n} \to \R^m$ and $p_2: \R^{m+n} \to \R^n$ the canonical orthogonal projections and $W$ some compact subset of $\R^m$. The classical definition also requires the injectivity of $p_{2|\Lambda}$ and the density of $p_1(\Lambda)$, the first of these two conditions being false in our context as soon as one of the maps $\widehat{A_i}$ is non-injective.
\end{minipage}\hfill
\begin{minipage}[r]{.4\linewidth}
\resizebox {\columnwidth} {!} {
\begin{tikzpicture}[scale=1]
\fill[color=blue!10!white] (-.6,-2) rectangle (.9,2);
\draw[color=blue!80!black] (-.6,-2) -- (-.6,2);
\draw[color=blue!80!black] (.9,-2) -- (.9,2);
\draw[color=blue!80!black, very thick] (-.6,0) -- (.9,0);
\draw[color=blue!80!black] (.25,0) node[below] {$W$};
\draw (-3.5,0) -- (3.5,0);
\draw (0,-2) -- (0,2);
\clip (-3.3,-2) rectangle (3.3,2);

\draw (.866,.364) -- (0,.364);
\draw (-.129,.987) -- (0,.987);
\draw (.737,1.351) -- (0,1.351);
\draw (-.258,1.974) -- (0,1.974);
\draw (.129,-.987) -- (0,-.987);
\draw (.258,-1.974) -- (0,-1.974);

\draw (0,0) node {$\times$};
\draw (0,.364) node {$\times$};
\draw (0,.987) node {$\times$};
\draw (0,1.351) node {$\times$};
\draw (0,1.974) node {$\times$};
\draw (0,-.987) node {$\times$};
\draw (0,-1.974) node {$\times$};

\foreach\i in {-4,...,4}{
\foreach\j in {-4,...,4}{
\draw[color=green!40!black] (.866*\i-.129*\j,.364*\i+.987*\j) node {$\bullet$};
}}
\draw[color=green!40!black] (1.2,-.8) node {$\Lambda$};
\end{tikzpicture}}
\end{minipage}
\smallskip

Under generic conditions among sequences of matrices, the set $p_1(\Lambda_k)$ is dense (thus, equidistributed) in the image set $\im p_1$. In particular, the set $\{p_1(\lambda)\mid \lambda\in\Lambda_k\}$ is equidistributed in the window $W^k$.

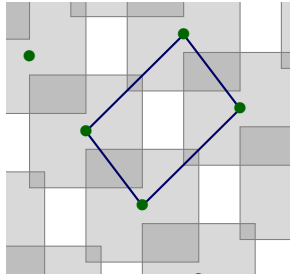
\begin{figure}[t]
\begin{center}
\begin{tikzpicture}[scale=1.5]
\clip (-1.2,-.6) rectangle (1.3,1.8);
\foreach\i in {-1,...,2}{
\foreach\j in {-1,...,3}{
\fill[color=gray,opacity = .3] (.866*\i-.5*\j-.5,.859*\i+.659*\j-.5) rectangle (.866*\i-.5*\j+.5,.859*\i+.659*\j+.5);
\draw[color=gray] (.866*\i-.5*\j-.5,.859*\i+.659*\j-.5) rectangle (.866*\i-.5*\j+.5,.859*\i+.659*\j+.5);
}}
\draw[color=blue!40!black,thick] (0,0) -- (.866,.859) -- (.366,1.518) -- (-.5,.659) -- cycle;
\foreach\i in {-2,...,2}{
\foreach\j in {-2,...,2}{
\draw[color=green!40!black] (.866*\i-.5*\j,.859*\i+.659*\j) node {$\bullet$};
}}
\end{tikzpicture}
\caption{Geometric construction to compute the rate of injectivity: the green points are the elements of $\Lambda$, the blue parallelogram is a fundamental domain of $\Lambda$ and the grey squares are centred on the points of $\Lambda$ and have radii $1/2$. The rate of injectivity is equal to the area of the intersection between the union of the grey squares and the blue parallelogram.}\label{tourp}
\end{center}
\end{figure}

The following property makes the link between the density of $\Gamma_k$ --- that is, the rate of injectivity of $A_1,\cdots,A_k$ --- and the density of the union of unit cubes centred on the points of the lattice $\Lambda_k$ (see Figure~\ref{tourp}). This formula seems to be very specific to the model sets defined by the matrix $M_{A_1,\cdots,A_k}$ and the window $W^k$, it is unlikely that it could be generalized to other model sets.

\begin{prop}\label{CalculTauxModel}
For a generic sequence of matrices $(A_k)_k$ of $SL_n(\R)$ (resp. $O_n(\R)$), we have
\[\tau^k(A_1,\cdots,A_k) \overset{\text{def.}}{=} D_d(\Gamma_k) = D_c\left( W^k + \widetilde\Lambda_k \right) \overset{\text{def.}}{=} \overline\tau^k(A_1,\cdots,A_k).\]
\end{prop}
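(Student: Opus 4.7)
My plan is to establish the two equalities separately, both exploiting the structural relationship between $\Lambda_k$ and $\widetilde\Lambda_k$ induced by the last block row of $M_{A_1,\ldots,A_k}$ being the identity.

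For the first equality $D_d(\Gamma_k)=D_c(W^k+\widetilde\Lambda_k)$, I start from the model-set expression \eqref{CalcGamma}. For $(x_0,\ldots,x_k)\in\Z^{n(k+1)}$, the last $n$ coordinates of $M_{A_1,\ldots,A_k}(x_0,\ldots,x_k)$ are simply $x_k$, so $p_2(\Lambda_k)=\Z^n$, and for each fixed $y\in\Z^n$ the preimage $p_2|_{\Lambda_k}^{-1}(y)$ projects under $p_1$ onto $\widetilde\Lambda_k-\varepsilon_y$, where $\varepsilon_y:=(0,\ldots,0,y)\in\R^{nk}$. Hence
\[y\in\Gamma_k\iff(\widetilde\Lambda_k-\varepsilon_y)\cap W^k\ne\emptyset\iff\varepsilon_y\in W^k+\widetilde\Lambda_k,\]
so $D_d(\Gamma_k)$ is the asymptotic proportion of $y\in\Z^n$ whose class in the torus $\mathcal T:=\R^{nk}/\widetilde\Lambda_k$ lies in the projection of $W^k$. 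The family $\{\varepsilon_y+\widetilde\Lambda_k\}_{y\in\Z^n}$ is the image in $\mathcal T$ of the subgroup $H:=\{0\}^{n(k-1)}\times\Z^n$ of $\R^{nk}$; by Weyl's criterion it is equidistributed for the Haar measure of $\mathcal T$ if and only if no nontrivial character of $\mathcal T$ vanishes on $H$, which, after identifying characters with the dual lattice, translates to $\widetilde M^{-T}\Z^{nk}\cap(\R^{n(k-1)}\times\Z^n)=\{0\}$. This is a countable intersection of open, dense conditions on $(A_1,\ldots,A_k)$; a Baire argument produces a $G_\delta$-dense subset of $\ell^\infty(SL_n(\R))$ on which the equidistribution holds for every $k\ge 1$ simultaneously. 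Under this condition, the density of $\{y:\varepsilon_y\in W^k+\widetilde\Lambda_k\}$ equals the Haar measure of the projection of $W^k$ in $\mathcal T$, which coincides with $D_c(W^k+\widetilde\Lambda_k)$ since $\det\widetilde M=\prod_i\det A_i=1$. The same equidistribution also promotes the $\limsup$ in Definition~\ref{DefTaux} to a genuine limit, so that $D_d(\Gamma_k)$ is well defined.

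The second equality $D_c(W^k+\widetilde\Lambda_k)=D_c(W^{k+1}+\Lambda_k)$ is purely geometric and requires no genericity. From the short exact sequence $0\to\widetilde\Lambda_k\to\Lambda_k\xrightarrow{p_2}\Z^n\to0$ (the kernel being obtained by forcing $x_k=0$), I take $F:=F_{\widetilde\Lambda_k}\times W^1$ as a fundamental domain of $\Lambda_k$ in $\R^{n(k+1)}$, where $W^1=\,]{-\tfrac12},\tfrac12]^n$; both covolumes are $1$ since $|\det\widetilde M|=|\det M|=1$. A direct check shows that for $(z,y)\in F$, membership $(z,y)\in W^{k+1}+\Lambda_k$ forces the last component of the corresponding lattice element to be $0$ (because $y\in W^1$), and the remaining $nk$ conditions then reduce exactly to $z\in W^k+\widetilde\Lambda_k$. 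Integrating the indicator function over $F$ and using $\Leb(W^1)=\Leb(F_{\widetilde\Lambda_k})=1$ yields the desired equality, which by~\eqref{DeftauBar} is $\overline\tau^k(A_1,\ldots,A_k)$.

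The main obstacle is the equidistribution step: one has to identify the rational-independence condition on $(A_i)_i$ precisely, check that it cuts out a dense $G_\delta$ subset of $\ell^\infty(SL_n(\R))$ (stable under iterated intersection over $k$), and exploit it to upgrade the $\limsup$ defining $D_d(\Gamma_k)$ to a genuine limit. Once the model-set picture and the block structure of $M_{A_1,\ldots,A_k}$ are in place, the remaining steps are bookkeeping.
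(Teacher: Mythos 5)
Your proof is correct and takes essentially the same route as the paper: both rest on the model-set identity~\eqref{CalcGamma}, the observation that the last block row of $M_{A_1,\ldots,A_k}$ reduces the question to whether $\varepsilon_y=(0^{(k-1)n},y)$ lands in the (translated) window $W^k+\widetilde\Lambda_k$, and an equidistribution argument for the subgroup $\{\varepsilon_y+\widetilde\Lambda_k\}_{y\in\Z^n}$ of $\R^{nk}/\widetilde\Lambda_k$. The only stylistic difference in the equidistribution step is that you phrase the genericity condition via the dual lattice $\widetilde M^{-T}\Z^{nk}$ and Pontryagin duality, whereas the paper computes $\widetilde M^{-1}$ explicitly and invokes ergodicity of the corresponding $\Z^n$-action on $\R^{nk}/\Z^{nk}$; these are the same condition in transpose form, so this is not a genuinely different argument. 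One thing you add that the paper leaves implicit is the second equality $D_c(W^k+\widetilde\Lambda_k)=D_c(W^{k+1}+\Lambda_k)=\overline\tau^k$: your fundamental-domain argument with $F_{\widetilde\Lambda_k}\times W^1$ and the forced vanishing of the last lattice component is a clean way to make it explicit, and it is correct. Two minor points worth tightening if you write this up: (i) the membership condition actually gives $\varepsilon_y\in\widetilde\Lambda_k-W^k$ rather than $\widetilde\Lambda_k+W^k$, which only agree up to the measure-zero boundary of the cube — harmless for densities but worth noting; (ii) the passage from density of the subgroup $H$ to equidistribution of $\{\varepsilon_y : y\in B_R\cap\Z^n\}$ along balls (which is what upgrades the $\limsup$ in Definition~\ref{DefTaux} to a limit) deserves one sentence of Weyl-type justification, as in the paper's Lemma~\ref{Weyl}.
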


Remark that the density on the left of the equality is the density of a discrete set (that is, with respect to counting measure), whereas the density on the right of the equality is that of a continuous set (that is, with respect to Lebesgue measure). The two notions coincide when we consider discrete sets as sums of Dirac masses.

A more precise version of this proposition will be given by Lemma~\ref{LemTauxDiff2}. In particular, it will imply that in the previous proposition, the generic sequence can be replaced by a sequence of iid random draws of law a probability measure whose support's interior is nonempty.

\begin{coro}\label{conttaukk}
For a generic sequence of matrices, the rate of injectivity $\tau^k$ in time $k$ coincides with the mean rate of injectivity $\overline\tau^k$, which is continuous and piecewise polynomial of degree $\le nk$ in the coefficients of the matrix.
\end{coro}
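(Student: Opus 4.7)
My plan is to deduce the first claim from Proposition~\ref{CalculTauxModel}, and then to establish the regularity of $\overline\tau^k$ directly from its geometric description as a volume in a fundamental domain. The equality $\tau^k = \overline\tau^k$ on a generic set is immediate: the proposition asserts $D_d(\Gamma_k) = \overline\tau^k(A_1,\dots,A_k)$, so in particular the discrete density $D_d(\Gamma_k)$ exists as a genuine limit. Since by definition
\[
\tau^k(A_1,\dots,A_k) = \limsup_{R\to\infty}\frac{\card(\Gamma_k\cap B_R)}{\card(\Z^n\cap B_R)} = D_d(\Gamma_k),
\]
the $\limsup$ is automatically a limit, and $\tau^k = \overline\tau^k$ on the generic set produced by the proposition.

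For the regularity of $\overline\tau^k$ as a function of the matrix coefficients, I would use the equivalent formulation $\overline\tau^k = D_c(W^k + \widetilde\Lambda_k)$ furnished by Proposition~\ref{CalculTauxModel}, which has the virtue of working in ambient dimension $nk$. Since $\det\widetilde M_{A_1,\dots,A_k} = \prod_i\det A_i = 1$, the lattice $\widetilde\Lambda_k$ has covolume $1$, and therefore $\overline\tau^k(A_1,\dots,A_k) = \Leb\bigl((W^k + \widetilde\Lambda_k)\cap F\bigr)$ for any fundamental domain $F$ of $\widetilde\Lambda_k$. Because $W^k$ is bounded, only finitely many $\lambda\in\widetilde\Lambda_k$ satisfy $(W^k+\lambda)\cap F\neq\emptyset$; call this finite subset $S$. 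Inclusion--exclusion then yields
\[
\overline\tau^k = \sum_{\emptyset\neq T\subset S}(-1)^{|T|+1}\,\Leb\Bigl(F\cap\bigcap_{\lambda\in T}(W^k+\lambda)\Bigr).
\]

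Each intersection $\bigcap_{\lambda\in T}(W^k+\lambda)$ is a box whose side length in the $i$-th coordinate direction equals $\max\bigl(0,\,1-(\max_{\lambda\in T}\lambda_i-\min_{\lambda\in T}\lambda_i)\bigr)$; this side length is piecewise linear in the components of the $\lambda\in T$, so the product over the $nk$ coordinate directions is piecewise polynomial of degree at most $nk$. Since the components of elements of $\widetilde\Lambda_k$ depend linearly on the coefficients of the matrices $A_i$, each summand of the inclusion--exclusion is piecewise polynomial of degree at most $nk$ in those coefficients, and further intersection with $F$ merely imposes additional half-space constraints without raising the degree.

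The pieces of this piecewise-polynomial structure are delimited by codimension-one loci across which either some side length crosses zero or the finite set $S$ acquires or loses an element. Continuity at such transitions should be automatic: when a side length vanishes, the corresponding summand equals zero on both sides of the locus, and a newly-entering $\lambda\in S$ first contributes a tangential intersection of measure zero, so no jump occurs. I expect the main technical subtlety to be precisely this bookkeeping of $S$ as the coefficients vary and the uniform verification of continuity across all such transitions; a convenient choice that makes the combinatorics explicit is $F = \widetilde M_{A_1,\dots,A_k}\cdot W^k$, the image of the unit cube under the defining matrix of $\widetilde\Lambda_k$.
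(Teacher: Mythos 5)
Your first step is correct: Proposition~\ref{CalculTauxModel} asserts that $D_d(\Gamma_k)$ exists as a genuine limit and equals $\overline\tau^k$ on a generic subset, so $\tau^k=\overline\tau^k$ there follows at once.

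The regularity argument, however, has a real gap in the degree bound. Once you intersect the axis-aligned boxes $\bigcap_{\lambda\in T}(W^k+\lambda)$ with the fundamental domain $F=\widetilde M_{A_1,\dots,A_k}\cdot W^k$ (or with any fundamental domain --- there is in general no axis-aligned box that tiles under $\widetilde\Lambda_k$), the resulting polytopes acquire slanted facets whose normals \emph{and} offsets both involve the matrix entries. The volume of a box cut by such half-spaces is generically only piecewise \emph{rational}, not polynomial of degree $\le nk$, because locating the vertices of the intersection requires solving linear systems whose coefficients contain those entries. Already with $n=2$, $k=1$: for $A_1=\left(\begin{smallmatrix}1&0\\ t&1\end{smallmatrix}\right)$ and $F=A_1W^1$ one finds $\Leb(F\cap W^1)=1/t$ for $t\ge 2$, a rational function of $t$. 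So the claim that ``further intersection with $F$ merely imposes additional half-space constraints without raising the degree'' is where the argument fails.

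The remedy is to avoid the fundamental domain altogether and keep everything axis-aligned. The paper's Lemma~\ref{DoubleComptage}, applied with $\Lambda_1=\{0\}$, $\Lambda_2=\widetilde\Lambda_k$ and $C=W^k$ (using $\Leb(W^k)=\operatorname{covol}(\widetilde\Lambda_k)=1$), gives
\[
\overline\tau^k = D_c\big(W^k+\widetilde\Lambda_k\big) = \sum_{j\ge 1}\frac{a_j}{j},
\qquad
a_j = \Leb\big\{x\in W^k : \card\{\lambda\in\widetilde\Lambda_k : x\in W^k+\lambda\}=j\big\}.
\]
Only finitely many $\lambda\in\widetilde\Lambda_k$ meet $W^k-W^k$, so only finitely many $a_j$ are nonzero, and each $a_j$ is by inclusion--exclusion a signed finite sum of volumes of intersections $W^k\cap\bigcap_{\lambda\in T}(W^k+\lambda)$. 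These are \emph{all} axis-aligned boxes in $\R^{nk}$, with side lengths of the form $\max\big(0,1-|\ell(A_1,\dots,A_k)|\big)$ for $\ell$ linear in the matrix entries, so each term is piecewise polynomial of degree $\le nk$, and so is $\overline\tau^k$. Continuity also falls out more transparently from this expression, by the same vanishing-side-length observation you already made.
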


Thus, the formula of Proposition~\ref{CalculTauxModel} could be used to compute numerically the mean rate of injectivity in time $k$ of a sequence of matrices: it is much faster to compute the volume of a finite number of intersections of cubes (in fact, a small number) than to compute the cardinalities of the images of a big set $[-R,R]^n \cap \Z^n$.

\begin{proof}[Proof of Proposition \ref{CalculTauxModel}]
We want to determine the density of $\Gamma_k$. By Equation~\eqref{CalcGamma}, we have
\[x\in\Gamma_k \iff x\in\Z^n\ \text{and}\ \exists \lambda\in \Lambda_k : x=p_2(\lambda),\, p_1(\lambda) \in W^k.\]
But if $p_2(\lambda)=x$, then we can write $\lambda=(\widetilde\lambda,0^n) + (0^{(k-1)n},-x,x)$ with $\widetilde\lambda\in \widetilde\Lambda_k$. Thus,
\begin{align*}
x\in\Gamma_k & \iff x\in\Z^n\ \text{and}\ \exists \widetilde\lambda\in \widetilde\Lambda_k : (0^{(k-1)n},-x)-\widetilde\lambda	\in W^k\\
             & \iff x\in\Z^n\ \text{and}\ (0^{(k-1)n},x)\in \bigcup_{\widetilde\lambda\in\widetilde\Lambda_k} \widetilde\lambda - W^k.
\end{align*}
Thus, $x\in\Gamma_k$ if and only if the projection of $(0^{(k-1)n},x)$ on $\R^{nk}/\widetilde\Lambda_k$ belongs to $\widetilde\Lambda_k - W^k$. Then, the proposition follows directly from the fact that, under generic conditions, the points of the form $(0^{(k-1)n},x)$, with $x\in\Z^n$, are equidistributed in $\R^{nk} / \widetilde \Lambda_k$.
\bigskip

To prove this equidistribution, we compute the inverse matrix of $\widetilde M_{A_1,\cdots,A_k}$:
\[{\widetilde M_{A_1,\cdots,A_k}}^{-1} = \begin{pmatrix}
A_1^{-1} & A_1^{-1}A_2^{-1} &  A_1^{-1}A_2^{-1}A_3^{-1} & \cdots & A_1^{-1}\cdots A_k^{-1}\\
    & A_2^{-1}  & A_2^{-1}A_3^{-1} & \cdots & A_2^{-1}\cdots A_k^{-1}\\
    &      & \ddots &   & \vdots\\
    &      &        & A_{k-1}^{-1} & A_{k-1}^{-1}A_k^{-1}\\
    &     &        &          & A_k^{-1}
\end{pmatrix}.\]
Thus, the set of points of the form $(0^{(k-1)n},x)$ in $\R^{nk} / \widetilde \Lambda_k$ corresponds to the image of the action
\begin{equation}\label{EqAction}
\Z^n \ni x \longmapsto
\begin{pmatrix}
A_1^{-1}\cdots A_k^{-1}\\
A_2^{-1}\cdots A_k^{-1}\\
\vdots\\
A_{k-1}^{-1}A_k^{-1}\\
A_k^{-1}
\end{pmatrix}x
\end{equation}
of $\Z^n$ on the canonical torus $\R^{nk}/\Z^{nk}$. But this action is ergodic (even in restriction to the first coordinate) when the sequence of matrices is generic among $SL_n(\R)$. Indeed, suppose that the restriction of this action to the first coordinate (i.e. to $\Z\times 0^{n-1}$) is ergodic in restriction to the $n(k-1)$ last coordinates. It suffices to consider a matrix $A_1$ such that the entries of the first column of $A_1^{-1}$ are algebraically independent over the extension of $\Q$ generated by the coefficients of $A_2^{-1},\cdots,A_k^{-1}$. One concludes by induction on $k$.

For the case of $O_n(\R)$, the same reasoning holds under the restriction that the Euclidean norm of the first column of $A_1^{-1}$ is one. But this follows from the easy following fact: if $F\subset \R$ is a field and if $x_1,\dots,x_{n-1}\in \R$ are algebraically independent over $F$ and satisfy $x_1^2 + \cdots + x_{n-1}^2 \le 1$, then there is no nontrivial linear Diophantine equation with coefficients in $F$ satisfied by $x_1,\cdots,x_n$, where $x_n$ is such that $x_1^2 + \cdots + x_{n}^2 = 1$.
\end{proof}

As an application, a small geometric computation leads to an explanation of the figures \cite[Figure 2.3]{thibault:tel-00596947} and \cite[Figure 3.9]{ThesePluta}.

\begin{ex}
For $\theta\in[0,\pi/2]$, the mean rate of injectivity of a rotation of $\R^2$ of angle $\theta$ is (see \cite[Application 8.14.]{Guih-These}).
\[\overline\tau(R_\theta) = 1 - (\cos(\theta) + \sin(\theta) - 1)^2.\]
\end{ex}

Recall the problem raised by Theorem~\ref{ConjPrincipIntro}: we want to make $\tau^k$ tend to 0 as $k$ tends to infinity. By the equidistribution argument stated in Proposition~\ref{CalculTauxModel}, generically, it is equivalent to make the mean rate of injectivity $\overline\tau^k$ tend to 0 when $k$ goes to infinity, by perturbing every matrix in $SL_n(\R)$ of at most $\delta>0$ (fixed once for all). The conclusion of Theorem~\ref{ConjPrincipIntro} is motivated by the phenomenon of concentration of the measure on a neighbourhood of the boundary of the cubes in high dimension.

\begin{rem}\label{concentration}
Let $W^k = ]-1/2,1/2]^k\in\R^k$ and $v^k$ the vector $(1,\cdots,1)\in \R^k$. Then, for every $\varep,\delta>0$, there exists $k_0\in\N^*$ such that for every $k\ge k_0$, we have $\Leb\big(W^k \cap (W^k + \delta v^k)\big) < \varep$.
\end{rem}

The case of equality $\overline\tau^k=1$ is given by Haj\'os theorem.

\begin{theoreme}[Haj\'os, \cite{MR0006425}]\label{hajos}
Let $\Lambda$ be a lattice of $\R^n$. Then the collection of squares $\{B_\infty(\lambda,1/2)\}_{\lambda\in\Lambda}$ tiles $\R^n$ if and only if in a canonical basis of $\R^n$ (that is, permuting coordinates if necessary), $\Lambda$ admits a generating matrix which is upper triangular with ones on the diagonal.
\end{theoreme}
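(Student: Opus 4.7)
The plan is to treat the two implications separately. The ``if'' direction is elementary and I would prove it fully by induction on $n$; the ``only if'' direction is precisely Haj\'os's 1941 theorem resolving Minkowski's conjecture on lattice tilings by unit cubes, and I would cite it via \cite{MR0006425} rather than reproduce its argument, indicating only the strategy and where the real difficulty lies.

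For the ``if'' direction, assume (after permuting coordinates) that $\Lambda$ is generated by the columns $v_1,\dots,v_n$ of an upper triangular matrix with unit diagonal, so $v_j = e_j + \sum_{i<j} a_{ij}\, e_i$. The last coordinate of $v_j$ is $\delta_{jn}$, hence the projection of $\Lambda$ onto the last coordinate has image $\Z$. Given $x\in\R^n$, there is a unique $k_n\in\Z$ with $x_n - k_n \in [-1/2,1/2)$, and replacing $x$ by $x - k_n v_n$ reduces the problem to tiling the hyperplane $\{x_n=0\}\cong\R^{n-1}$ by cubes centered at $\Lambda\cap\{x_n=0\}$, which is the rank $(n-1)$ lattice generated by $v_1,\dots,v_{n-1}$. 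The first $n-1$ coordinates of these vectors again form an upper triangular matrix with unit diagonal, so the inductive hypothesis applies; the base case $n=1$ is immediate. Existence and uniqueness of $k_n$ together give tiling rather than mere covering.

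The ``only if'' direction is Haj\'os's theorem itself. The strategy, whose main obstacle is algebraic rather than geometric, runs as follows. One first reduces to the rational case by a perturbation/approximation argument on $\Lambda$. After a suitable rescaling, the tiling hypothesis becomes a factorization $G = A_1 + \cdots + A_n$ of a finite abelian group $G$ into ``arithmetic blocks'' $A_i = \{0,g_i,2g_i,\dots,(m_i-1)g_i\}$, one for each generator of $\Lambda$. Haj\'os proves, by a subtle character-theoretic argument on the dual $\widehat{G}$, that at least one $A_i$ must in fact be a subgroup of $G$. Geometrically this means that some generator of $\Lambda$ lies along a coordinate axis (up to permuting coordinates), i.e.\ a column of the generator matrix can be taken to be a standard basis vector. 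Peeling this factor off and inducting on the rank yields the desired upper triangular normal form with ones on the diagonal.

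The hard part is entirely the group-factorization step: showing that a factorization of a finite abelian group into cyclic arithmetic blocks must contain a genuine subgroup. Purely geometric methods do not seem to reach this conclusion, which is why Haj\'os's passage to the algebraic setting is essential. Given the depth of that step, and the fact that the statement is used here only as a rigidity criterion characterizing the equality case $\overline{\tau}^k = 1$, I would present the easy direction in detail and invoke \cite{MR0006425} for the hard one.
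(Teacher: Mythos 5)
The paper states this result with a citation to Haj\'os and gives no proof at all, so your approach of invoking \cite{MR0006425} for the hard implication matches the paper exactly. Your inductive argument for the ``if'' direction is correct (the key point being that the $n$-th coordinate of $\lambda\in\Lambda$ equals the integer coefficient of $v_n$, so the last coordinate determines a unique layer, after which translation invariance reduces to rank $n-1$), and your sketch of Haj\'os's reduction to factorizations of finite abelian groups into cyclic blocks is an accurate account of where the real difficulty lies.
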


\begin{rem}
The kind of questions addressed by Haj\'os theorem are in general quite delicate. For example, one can wonder what happens without the assumption that the centres of the cubes form a lattice of $\R^n$. O. H. Keller conjectured in \cite{zbMATH02567416} that the conclusion of Haj\'os theorem is still true under this weaker hypothesis. This conjecture was proven to be true for $n\le 6$ by O. Perron in \cite{MR0003041,MR0002185}, but remained open in higher dimension until 1992, when J. C. Lagarias and P. W. Shor proved in \cite{MR1155280} that Keller's conjecture is false for $n\ge 10$ (this result was later improved by \cite{MR1920144} which shows that it is false as soon as $n\ge 8$; the case $n=7$ is to our knowledge still open).
\end{rem}

Combining Haj\'os theorem with Proposition~\ref{CalculTauxModel}, we obtain that the equality $\overline\tau^k=1$ occurs if and only if the lattice given by the matrix $M_{A_1,\cdots,A_k}$ satisfies the conclusions of Haj\'os theorem\footnote{Of course, this property can be obtained directly by saying that the density is equal to 1 if and only if the rate of injectivity of every matrix of the sequence is equal to 1.}. The heuristic suggested by the phenomenon of concentration of the measure is that if we perturb ``randomly'' any sequence of matrices, we will go ``far away'' from the lattices satisfying Haj{\'o}s theorem and then the rate of injectivity will be close to 0.
\bigskip

In fact, the proof of Proposition \ref{CalculTauxModel} shows a more precise statement: if the image of \eqref{EqAction} is $\delta$-dense, then $\tau^k(A_1,\cdots,A_k)$ is close to $\overline\tau^k(A_1,\cdots,A_k)$. This leads to the following statement.

\begin{prop}\label{PropMemMat}
Let $n\ge 2$ and $G$ be one of the groups $GL_n(\R)$, $SL_n(\R)$ or $SO_n(\R)$. Then there exists a countable collection of positive codimension submanifolds $V_q$ of $G$ such that for any $\varep>0$ and any time $k\in\N$, there exists $M\in\N$ such that if $A\in G\setminus \bigcup_{q=1}^M V_q$, then 
\[\Big| \tau^k(A,\cdots,A) - \overline\tau^k(A,\cdots,A)\Big| \le\varep.\]
In particular, there exists a generic full measure subset of $G$ on which $\tau^k = \overline\tau^k$ for any $k\in\N$.
\end{prop}

\begin{proof}[Proof of Proposition \ref{PropMemMat}]
By using Weyl's criterion (Proposition \ref{Weyl}) and the proof of Proposition \ref{CalculTauxModel}, it suffices to prove that for any $\varep>0$ and any $k\in\N$, there exists $C\in\N$ and a locally finite collection of positive codimension submanifolds of $G^{-1} = G$ on which at least one column of the matrix
\[\begin{pmatrix}
A^{k}\\
A^{k-1}\\
\vdots\\
A^{2}\\
A
\end{pmatrix}\]
forms an ``almost $\Q$-free family'', that is, is not a solution of any linear Diophantine equation with coefficients in $\{-C,\cdots,C\}$. Let $V$ be the set of solutions of such an equation; as this equation is polynomial in the coefficients of $A$ this is a subvariety of $GL_n(\R)$. For $G = GL_n(\R)$, this is also a positive codimension subvariety of $G$.

For the other cases, $G$ is an irreducible variety; thus to prove that $V\cap G$ is a positive codimension subvariety of $G$ it suffices to prove that the set $G\setminus V$ is nonempty (see for instance \cite[Chap. III prop. 7]{MR0217337}).

If $G$ contains both the sets of diagonal and permutation matrices, it suffices to  consider a diagonal matrix with a transcendent entry and conjugate it by a proper permutation matrix.

If $G = SO_n(\R)$, Lindemann-Weierstrass theorem implies that for $\theta\in\Q$, one has no nontrivial linear Diophantine equation with rational coefficients satisfied by the numbers $\cos(k\theta)$ and $\sin(k\theta)$: if it was the case, $e^{i\theta}$ would be a zero of a polynomial in $(\Q(i))[X]$, which would contradict the fact that $\big[\Q(e^{i\theta}):\Q\big]=\infty$. We then consider the matrix given by the diagonal blocks $R_\theta$ and $I_{n-2}$ and conjugate it by a proper permutation matrix; this gives us a matrix of $SO_n(\R)$ which is not in $V$.
\end{proof}

\subsection[Proof that the rate is smaller than $1/2$]{A first step: proof that the asymptotic rate of injectivity is generically smaller than $1/2$}

As a first step, we prove that rate of injectivity of a generic sequence of $\ell^\infty(SL_n(\R))$ is smaller than $1/2$.

\begin{prop}\label{PerLin1}
There exists an open and dense subset of sequences of matrices in $SL_n(\R)$ in which every sequence $(A_k)_{k\ge 1}$ satisfies: there exists a parameter $\alpha\in ]0,1[$ such that for every $k\ge 1$, we have $\overline\tau^k(A_1,\cdots,A_k) \le (\alpha^k+1)/2$. In particular, $\overline\tau^\infty((A_k)_k)\le 1/2$.
\end{prop}

To begin with, we give a lemma estimating the sizes of intersections of cubes when the mean rate of injectivity $\overline\tau^k$ is bigger than $1/2$.

\begin{lemme}\label{EstimTaux}
Let $W = ]-1/2,1/2]^m$ and $\Lambda\subset \R^m$ be a lattice with covolume 1 such that $D_c(W + \Lambda)\ge 1/2$. Then, for every $v\in\R^m$, we have
\[D_c\big((W + \Lambda + v)\cap (W + \Lambda)\big) \ge 2D_c(W + \Lambda)-1.\]
\end{lemme}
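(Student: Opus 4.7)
The statement is essentially an inclusion–exclusion inequality once one moves to the correct setting, namely the quotient torus.

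My plan is to push everything down to the torus $T=\R^k/\Lambda$. Set $E = W^k+\Lambda$; by construction $E$ is $\Lambda$-invariant, and since $\Lambda$ has covolume~$1$, a standard averaging argument gives
\[
D_c(E) \;=\; \Leb_T(\overline E),
\]
where $\overline E$ denotes the image of $E$ in $T$ (here $\Leb_T$ is Lebesgue measure on $T$, which has total mass~$1$). The same holds for the $\Lambda$-invariant set $E+v$, whose projection to $T$ is the translate $\overline E + \overline v$, so $\Leb_T(\overline{E+v}) = \Leb_T(\overline E)$. Moreover, because both sets are $\Lambda$-invariant, so is their intersection, and $\overline{E\cap(E+v)} = \overline E \cap \overline{E+v}$, hence
\[
D_c\bigl((W^k+\Lambda+v)\cap(W^k+\Lambda)\bigr) \;=\; \Leb_T\bigl(\overline E \cap (\overline E + \overline v)\bigr).
\]

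The second step is a one-line inclusion–exclusion on $T$: since $\Leb_T(T)=1$,
\[
\Leb_T\bigl(\overline E \cap (\overline E + \overline v)\bigr) \;\ge\; \Leb_T(\overline E) + \Leb_T(\overline E + \overline v) - \Leb_T(T) \;=\; 2D_c(E) - 1.
\]
The hypothesis $D_c(E)\ge 1/2$ guarantees that the right-hand side is non-negative, so the bound is meaningful and matches the statement.

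There is no real obstacle here; the only point requiring a little care is the justification that, for a $\Lambda$-invariant measurable set $E$ with $\Lambda$ of covolume $1$, the Euclidean asymptotic density $D_c(E)$ coincides with the Haar measure $\Leb_T(\overline E)$ of the image in the torus. This follows from the fact that one can tile any large ball $B_R$ by translates of a fundamental domain $F$ up to an error supported in an $O(R^{k-1})$-neighbourhood of $\partial B_R$, so $\Leb(E\cap B_R)/\Leb(B_R)\to \Leb(E\cap F)/\Leb(F) = \Leb_T(\overline E)$. Once this identification is in place, the lemma is immediate.
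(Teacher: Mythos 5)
Your proof is correct and follows essentially the same route as the paper's: both pass to the quotient torus $\R^k/\Lambda$, identify the density of the $\Lambda$-periodic set $W^k+\Lambda$ with its Haar measure there, and conclude by inclusion--exclusion (the total mass being $1$ because $\Lambda$ has covolume $1$). You merely spell out more explicitly the identification $D_c(E)=\Leb_T(\overline E)$ and the equality $\overline{E\cap(E+v)}=\overline E\cap\overline{E+v}$, both of which the paper uses implicitly.
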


\begin{proof}[Proof of Lemma~\ref{EstimTaux}]
We first remark that $D_c(\Lambda+W)$ is equal to the volume of the projection of $W$ on the quotient space $\R^m/\Lambda$. For every $v\in\R^m$, the projection of $W+v$ on $\R^m/\Lambda$ has the same volume; as this volume is greater than $1/2$, and as the covolume of $\Lambda$ is 1, the projections of $W$ and $W+v$ overlap, and the volume of the intersection is bigger than $2D_c(W + \Lambda)-1$. Returning to the whole space $\R^m$, we get the conclusion of the lemma.
\end{proof}

A simple counting argument leads to the proof of the following lemma.

\begin{lemme}\label{DoubleComptage}
Let $\Lambda_1$ be a subgroup of $\R^m$, $\Lambda_2$ be such that $\Lambda_1 \oplus \Lambda_2$ is a lattice of covolume 1 of $\R^m$, and $C$ be a compact subset of $\R^m$. Let $C_1$ be the projection of $C$ on the quotient $\R^m/\Lambda_1$, and $C_2$ the projection of $C$ on the quotient $\R^m/(\Lambda_1\oplus\Lambda_2)$. We denote by
\[a_i = \Leb\big\{x\in C_1 \mid \card\{\lambda_2\in\Lambda_2 \mid x\in C_1+\lambda_2\} = i \big\}\]
(in particular, $\sum_{i\ge 1} a_i = \Leb(C_1)$). Then,
\[\Leb(C_2) = \sum_{i\ge 1} \frac{a_i}{i}.\]
\end{lemme}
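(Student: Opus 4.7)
The plan is to exploit the natural covering projection $p : \R^m/\Lambda_1 \to \R^m/(\Lambda_1\oplus\Lambda_2)$, whose fibres are precisely the $\Lambda_2$-orbits in $\R^m/\Lambda_1$. Because $\Lambda_1\oplus\Lambda_2$ is a direct sum and a full-rank lattice in $\R^m$, the action of $\Lambda_2$ on $\R^m/\Lambda_1$ is free, so $p$ is a genuine covering map with constant fibre $\Lambda_2$; moreover $C_2 = p(C_1)$ by construction. The key observation is that, for $x \in C_1$, the integer $i$ defining $a_i$ equals $\card\bigl(p^{-1}(p(x)) \cap C_1\bigr)$: indeed the condition $x \in C_1 + \lambda_2$ means $x - \lambda_2 \in C_1$ modulo $\Lambda_1$, which enumerates precisely the points of the fibre through $x$ that lie in $C_1$. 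Call this integer $i(x)$; it is constant along fibres of $p$, and so descends to a function on $C_2$.

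With this in hand, I will apply the standard unfolding formula for the covering $p$. Since $\R^m/(\Lambda_1\oplus\Lambda_2)$ is a fundamental domain for the $\Lambda_2$-action on $\R^m/\Lambda_1$, for any non-negative integrable function $f$ on $\R^m/\Lambda_1$ supported on a set of finite measure,
\[
\int_{\R^m/\Lambda_1} f(x) \ud x \;=\; \int_{\R^m/(\Lambda_1\oplus\Lambda_2)} \sum_{\lambda_2 \in \Lambda_2} f(y+\lambda_2) \ud y.
\]
Applied to $f = \1_{C_1}$ this already gives the sanity check $\Leb(C_1) = \int_{C_2} i(y) \ud y$, which in particular expresses the total measure of $C_1$ as $\sum_{i\ge 1} i \cdot (\text{contribution of $\{i=i_0\}$ to } C_2)$.

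The actual conclusion comes from the following telescoping trick: apply the same unfolding identity to $f = \1_{C_1}/i$, which is well-defined and bounded on $C_1$. On the right-hand side the inner sum $\sum_{\lambda_2} f(y+\lambda_2)$ reduces to $i(y)/i(y) = 1$ for every $y \in C_2$, so the right-hand side equals $\Leb(C_2)$. On the left-hand side, partitioning $C_1$ into its level sets $\{i = i_0\}$, each of Lebesgue measure $a_{i_0}$, gives $\sum_{i \ge 1} a_i/i$. Equating the two sides yields the claimed identity.

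I do not anticipate any serious obstacle: the entire argument is the ``fibration through the $\Lambda_2$-action'' viewpoint together with one elementary unfolding. The only subtleties worth checking are that the multiplicity function $i(x)$ is indeed finite on $C_1$ (which follows from compactness of $C$, since only finitely many $\Lambda_2$-translates of a bounded set can meet a fixed point modulo $\Lambda_1$) and that the $\Lambda_2$-action on $\R^m/\Lambda_1$ is free, which is exactly what the direct-sum hypothesis guarantees.
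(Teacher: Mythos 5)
Your proof is correct, and it is a clean formalization of the ``simple counting argument'' the paper alludes to without writing out: the paper gives no explicit proof of this lemma. The identification of the multiplicity $i(x)$ with the fibre count $\card(p^{-1}(p(x)) \cap C_1)$, the freeness of the $\Lambda_2$-action (from $\Lambda_1 \cap \Lambda_2 = \{0\}$), the finiteness via properness of the action on the compact set $C_1$, and the unfolding applied to $\1_{C_1}/i$ are all sound; this is exactly the ``decompose $C_1$ by multiplicity and divide each piece by its degree'' counting that the paper has in mind.
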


In particular, the area of $C_2$ (the projection on the quotient by $\Lambda_1 \oplus \Lambda_2$) is smaller than (or equal to) that of $C_1$ (the projection on the quotient by $\Lambda_1$). The loss of area is given by the following corollary.

\begin{coro}\label{CoroSansNom}
With the same notations as for Lemma~\ref{DoubleComptage}, if we denote by
\[\widetilde C_1 = \Leb\big\{x\in C_1 \mid \card\{\lambda_2\in\Lambda_2 \mid x\in C_1+\lambda_2\} \ge 2 \big\},\]
then,
\[\Leb(C_2) \le \Leb(C_1) - \frac{\widetilde C_1}{2}.\]
\end{coro}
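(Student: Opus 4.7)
The plan is to deduce Corollary~\ref{CoroSansNom} as an essentially immediate consequence of Lemma~\ref{DoubleComptage}, by splitting the sum $\sum_{i\ge 1} a_i/i$ between the contribution of $i=1$ (which is not lost when we pass to the quotient by $\Lambda_1\oplus\Lambda_2$) and the contribution of $i\ge 2$ (which is divided by a factor of at least $2$).

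More precisely, I would first observe the two identities
\[\Leb(C_1) = \sum_{i\ge 1} a_i = a_1 + \sum_{i\ge 2} a_i, \qquad D_1 = \sum_{i\ge 2} a_i,\]
the first being the decomposition given in the statement of Lemma~\ref{DoubleComptage} and the second being simply the definition of $D_1$. Combining them yields $a_1 = \Leb(C_1) - D_1$.

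Next I would apply Lemma~\ref{DoubleComptage}, which gives $\Leb(C_2)=\sum_{i\ge 1} a_i/i$, and use the elementary bound $1/i\le 1/2$ for $i\ge 2$ to obtain
\[\Leb(C_2)=a_1+\sum_{i\ge 2}\frac{a_i}{i}\le a_1+\frac{1}{2}\sum_{i\ge 2}a_i=a_1+\frac{D_1}{2}.\]
Substituting $a_1 = \Leb(C_1) - D_1$ into this inequality gives exactly $\Leb(C_2)\le \Leb(C_1) - D_1/2$, which is the claimed estimate.

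There is no real obstacle here: the whole argument is a one-line combinatorial rearrangement once Lemma~\ref{DoubleComptage} is at hand. The only thing worth noting is the geometric interpretation that makes the corollary useful later: each time a point of $C_1$ is covered twice by translates of $C_1$ under $\Lambda_2$, one of the two copies is ``lost'' when projecting further to $\R^m/(\Lambda_1\oplus\Lambda_2)$, and the factor $1/2$ is sharp precisely in the case of multiplicity exactly $2$.
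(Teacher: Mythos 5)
Your argument is correct and is precisely the calculation the paper leaves implicit when it labels this statement a corollary of Lemma~\ref{DoubleComptage}: writing $D_1=\sum_{i\ge 2}a_i$, using $\sum_{i\ge 1}a_i=\Leb(C_1)$, and bounding $1/i$ by $1/2$ for $i\ge 2$ gives the stated estimate in one line. There is nothing to add.
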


\begin{proof}[Proof of Proposition~\ref{PerLin1}]\label{Proof2PerLin1}
Let $\delta>0$ and $M>0$. We proceed by induction on $k$ and suppose that the following property is proved until a certain rank $k\in\N^*$: there exists $\alpha>0$, depending only on $\delta$ and $M$, such that for any sequence of matrices $(A_k)_{k\ge 1}$ of $SL_n(\R)$ with $\|(A_k)_k\|_\infty \le M$, there exists $B_1,\dots,B_k \in SL_n(\R)$ such that $\|A_j - B_j\| \le \delta$ for any $j$ and $\overline\tau^k(B_1,\cdots,B_k) \le (\alpha^k+1)/2$.

Let $\widetilde \Lambda_k$ be the lattice spanned by the matrix $\widetilde M_{B_1,\cdots,B_k}$ and $W^k = ]-1/2, 1/2]^{nk}$ be the window corresponding to the model set $\Gamma_k$ modeled on $\Lambda_k$ (the lattice spanned by the matrix $M_{B_1,\cdots,B_k}$, see Equations~\eqref{DefMat1} and \eqref{DefMat2}).

We now choose a matrix $B_{k+1}$ satisfying $\|A_{k+1} - B_{k+1}\|\le\delta$, such that there exists $x_1\in \Z^n\setminus \{0\}$ such that $\| B_{k+1}x_1\|_\infty \le 1-\varep$, with $\varep>0$ depending only on $\delta$ and $M$ (and $n$): indeed, for every matrix $B\in SL_n(\R)$, Minkowski theorem implies that there exists $x_1\in \Z^n\setminus \{0\}$ such that $\| Bx_1\|_\infty \le 1$; it then suffices to modify slightly $B$ to decrease $\| Bx_1\|_\infty$. By the form of the matrix $\widetilde M_{B_1,\cdots,B_{k+1}}$, we have the decomposition
\[W^{k+1}+ \widetilde \Lambda_{k+1} = W^{k+1} + \begin{pmatrix} \widetilde \Lambda_k\\0^n \end{pmatrix} +
\begin{pmatrix} 0^{n(k-1)}\\ -I_n\\ B_{k+1} \end{pmatrix} \Z^n.\]
In particular, as $|\det(B_{k+1})|=1$, this easily implies that
\[\overline\tau^{k+1} \overset{\text{def.}}{=} D_c\left(W^{k+1}+ \widetilde \Lambda_{k+1}\right)\le D_c\left(W^k+ \widetilde \Lambda_{k} \right) \overset{\text{def.}}{=} \overline\tau^k.\]

\begin{figure}[t]
\begin{minipage}[t]{\linewidth}
\emph{How to read these figures :} The top of the figure represents the set $W^k + \widetilde\Lambda_k$ by the 1-dimensional set $[-1/2,1/2] + \nu\Z$ (in dark blue), for a number $\nu>1$. The bottom of the figure represents the set $W^{k+1} + \widetilde\Lambda_{k+1}\subset \R^{n(k+1)}$ by the set $[-1/2,1/2]^2 + \Lambda$, where $\Lambda$ is the lattice of $\R^2$ spanned by the vectors $(0,\nu)$ and $(1,1-\varep)$ for a parameter $\varep>0$ close to 0. The dark blue cubes represent the ``old'' cubes, that is, the thickening $W^{k+1} + (\widetilde\Lambda_k,0^n)$ of the set $W^k + \widetilde\Lambda_k$, and the light blue cubes represent the ``added'' cubes, that is, the rest of the set $W^{k+1} + \widetilde\Lambda_{k+1}$.
\end{minipage}\vspace{25pt}

\begin{minipage}[t]{.48\linewidth}
\begin{center}
\begin{tikzpicture}[scale=.95]
\draw[color=gray] (-.8,3) -- (5,3);
\draw[very thick,blue, |-|] (-.5,3) -- (.5,3);
\draw[very thick,blue, |-|] (-.5+1.8,3) -- (.5+1.8,3);
\draw[very thick,blue, |-|] (-.5+3.6,3) -- (.5+3.6,3);

\clip (-.6,-1.3) rectangle (4.8,1.5);
\foreach\j in {0,...,2}{
\draw[fill=blue,opacity=.25] (-.5+1.8*\j,-.5) rectangle (.5+1.8*\j,.5);
}
\foreach\i in {-2,...,2}{
\foreach\j in {-1,...,3}{
\draw[fill=blue,opacity=.20] (-.5+\i+1.8*\j,-.5+.8*\i) rectangle (.5+\i+1.8*\j,.5+.8*\i);
\draw (-.5+\i+1.8*\j,-.5+.8*\i) rectangle (.5+\i+1.8*\j,.5+.8*\i);
}}
\draw[dashed] (-1,-.4) -- (5,-.4);
\fill[white, opacity=0.5] (1.2,-.4) rectangle (1.5,-.7);
\draw (1.35,-.55) node {$v$};
\draw[->,>=latex] (1.8,0) -- (0.8,-0.8);
\fill (0.3,-0.3) rectangle (0.5,-0.5);
\end{tikzpicture}
\caption[Intersection of cubes, rate bigger than $1/2$]{In the case where the rate is bigger than $1/2$, some intersections of cubes appear automatically between times $k$ and $k+1$.}\label{FigTauDemi}
\end{center}
\end{minipage}\hfill
\begin{minipage}[t]{.48\linewidth}
\begin{center}
\begin{tikzpicture}[scale=.95]
\draw[color=gray] (-.8,3) -- (5,3);
\draw[very thick,blue, |-|] (-.5,3) -- (.5,3);
\draw[very thick,blue, |-|] (-.5+2.8,3) -- (.5+2.8,3);

\clip (-.6,-1.3) rectangle (4.8,1.5);
\foreach\j in {0,...,1}{
\draw[fill=blue,opacity=.25] (-.5+2.8*\j,-.5) rectangle (.5+2.8*\j,.5);
}
\foreach\i in {-3,...,2}{
\foreach\j in {-1,...,2}{
\draw[fill=blue,opacity=.20] (-.5+\i+2.8*\j,-.5+.8*\i) rectangle (.5+\i+2.8*\j,.5+.8*\i);
\draw (-.5+\i+2.8*\j,-.5+.8*\i) rectangle (.5+\i+2.8*\j,.5+.8*\i);
}}
\end{tikzpicture}
\caption[Intersection of cubes, rate smaller than $1/2$]{In the case where the rate is smaller than $1/2$, there is not necessarily new intersections between times $k$ and $k+1$.}\label{FigTauTiers}
\end{center}
\end{minipage}
\end{figure}

What we need is a more precise bound. We apply Corollary~\ref{CoroSansNom} to
\[\Lambda_1 = \big(\widetilde\Lambda_k,0^n\big), \qquad \Lambda_2 = \begin{pmatrix} 0^{n(k-1)}\\ -I_n\\ B_{k+1} \end{pmatrix} \Z^n \qquad \text{and} \qquad C = W^{k+1}.\]
Then, the decreasing of the rate of injectivity between times $k$ and $k+1$ is bigger than the $\widetilde C_1$ defined in Corollary~\ref{CoroSansNom}: using Lemma~\ref{EstimTaux}, we have
\[D_c\left(\Big(W^k + (\widetilde\Lambda_k,0^n)\Big) \cap \Big(W^k + (\widetilde\Lambda_k,0^n) + \big(0^{n(k-1)},-x_1\big)\Big)\right) \ge 2D_c\left(W^k + \widetilde\Lambda_k\right)-1.\]
On Figure \ref{FigTauDemi}, this corresponds to the restriction to the horizontal dashed line: on this subspace one can see the sets $W^{k+1} + (\widetilde\Lambda_k,0^n)$ (in dark blue) and $W^{k+1} + (\widetilde\Lambda_k,0^n) + v$. The previous formula gives a lower bound of the horizontal size of the black rectangle. As $\|x_1\|_\infty <1-\varep$, the vertical size of this rectangle is bigger than $\varep^n$, which leads to a total area of the rectangle:
\[\widetilde C_1 \ge \varep^n \left(2D_c\left(W^k + \widetilde\Lambda_k\right)-1\right).\]
From Corollary~\ref{CoroSansNom} we deduce that
\[D_c\left(W^{k+1}+ \widetilde \Lambda_{k+1}\right)
 \le D_c\left(W^k + \widetilde\Lambda_k\right) - \frac12 \widetilde C_1,
\]
Hence,
\[\overline\tau^{k+1}
\le \overline\tau^k - \frac12 \varep^n \big(2\overline\tau^k-1\big).
\]
This proves the property for rank $k+1$, by setting $\alpha = 1-\varep^n$.

Finally, we have proved the following: for every $\delta>0$ and any $M>0$, there exists a number $\alpha<1$ and a $\delta$-dense open subset of sequences of matrices of $SL_n(\R)$ of norm smaller than $M$ on which $\overline\tau^k(B_1,\cdots,B_k) \le (\alpha^k+1)/2$. This gives the proposition.
\end{proof}

\subsection[Generically, the asymptotic rate is zero]{Proof of Theorem~\ref{ConjPrincipIntro}: generically, the asymptotic rate is zero}

We now come to the proof of Theorem~\ref{ConjPrincipIntro}. The strategy of proof is identical to that we used in the previous section to state that generically, the asymptotic rate is smaller than $1/2$ (Proposition~\ref{PerLin1}): we will use an induction to decrease the rate step by step. Recall that $\overline\tau^k(A_1,\cdots,A_k)$ indicates the density of the set $W^{k+1} + \widetilde\Lambda_k$.

Unfortunately, if the density of $W^k + \widetilde\Lambda_k$ --- which is generically equal to the density of the $k$-th image $\big(\widehat A_k\circ\cdots\circ \widehat A_1\big)(\Z^n)$ --- is smaller than $1/2$, then we cannot apply exactly the strategy of proof of the previous section (see Figure~\ref{FigTauTiers}). For example, taking
\[A_1 = \operatorname{diag}(100,1/100)  \quad  \text{and}\quad A_2 = \operatorname{diag}(1/10,10),\]
one gets $\big(\widehat{A_2}\circ \widehat{A_1}\big)(\Z^2) = (10\Z)^2$, and for every $B_3$ close to the identity, we have $\tau^3(A_1,A_2,B_3) = \tau^2(A_1,A_2) = 1/100$. This example tells us that there exists an open set of $\ell^\infty(SL_n(\R))$ on which the sequence $\tau^k$ is not strictly decreasing.

Moreover, if we set $A_k=\operatorname{Id}$ for every $k\ge 2$, then we can set $B_1=A_1$, $B_2=A_2$ and for each $k\ge 2$ perturb each matrix $A_k$ into the matrix $B_k = \operatorname{diag}(1+\delta,1/(1+\delta))$, with $\delta>0$ small. In this case, there exists a time $k_0$ (minimal) such that $\tau^{k_0}(B_1,\cdots,B_{k_0}) < 1/100$. But if instead of $A_{k_0} = \operatorname{Id}$, we had $A_{k_0} = \operatorname{diag}(1/5,5)$, this construction would not work anymore: we should have set $B_k = \operatorname{diag}(1/(1+\delta),1+\delta)$ instead. This suggests that we should take into account the next terms of the sequence $(A_k)_k$ to perform the perturbations. And things seems even more complicated when the matrices are no longer diagonal\dots
\bigskip

To overcome this difficulty, we use the same proof strategy than in the case where $\overline\tau\ge 1/2$, provided we wait long enough: for a generic sequence of matrices $(A_k)_{k\ge 1}$, if $\overline\tau^k(A_1,\cdots,A_k)> 1/\ell$, then $\overline\tau^{k+\ell-1}(A_1,\cdots,A_{k+\ell-1})$ is strictly smaller than $\overline\tau^k(A_1,\cdots,A_k)$. More precisely, we consider the maximal number of disjoint translates of $W^k + \widetilde\Lambda_k$ in $\R^{nk}$: generalizing Lemma \ref{EstimTaux}, we easily see that if the density of $W^k + \widetilde\Lambda_k$ is bigger than $1/\ell$, then there cannot be more than $\ell$ disjoint translates of $W^k + \widetilde\Lambda_k$ in $\R^{nk}$(Lemma~\ref{EstimTauxN}). At this point, Lemma~\ref{LemDeFin} will state that if the sequence of matrices is generic, then either the density of $W^{k+1} + \widetilde\Lambda_{k+1}$ is smaller than that of $W^k + \widetilde\Lambda_k$ (Figure~\ref{FigLemDeFin}), or there cannot be more than $\ell-1$ disjoint translates of $W^{k+1} + \widetilde\Lambda_{k+1}$ in $\R^{n(k+1)}$(Figure~\ref{FigLemDeFin2}). Applying this reasoning (at most) $\ell-1$ times, we obtain that the density of $W^{k+\ell-1} + \widetilde\Lambda_{k+\ell-1}$ is smaller than that of $W^k + \widetilde\Lambda_k$. For example if $D_c\big(W^k + \widetilde\Lambda_k\big) > 1/3$, then $D_c\big(W^{k+2} + \widetilde\Lambda_{k+2}\big) < D\big(W^k + \widetilde\Lambda_k\big)$ (Figure~\ref{FigInterCubes3DSupDemi}). To apply this strategy in practice, we have to obtain quantitative estimates about the loss of density we get between times $k$ and $k+\ell-1$.

Remark that with this strategy we do not need to make ``clever'' perturbations of the matrices: provided that the coefficients of the matrices are rationally independent, the perturbation of each matrix is made independently from that of the others. However, this reasoning does not tell us exactly when the rate of injectivity decreases (likely, in most of cases, the speed of decreasing of the rate of injectivity is much faster than the one obtained by this method), and does not say either where exactly the loss of injectivity occurs in the image sets.

We will indeed prove a more precise statement of Theorem~\ref{ConjPrincipIntro}.

\begin{theoreme}\label{ConjPrincip}
For a generic sequence of matrices $(A_k)_{k\ge 1}$ of $\ell^\infty(SL_n(\R))$,
\[\overline\tau^\infty\big( (A_k)_{k\ge 1}\big)=0,\]
and for every $\varep>0$, the set of $(A_k)_{k\ge 1}\in\ell^\infty(SL_n(\R))$ such that $\overline\tau^\infty\big( (A_k)_{k\ge 1}\big) <\varep$ is open and dense.

Moreover, for any $\ell>0$, there exists a locally finite number of positive codimension submanifolds $V_q$ of $SL_n(\R)$ such that for any $\delta>0$, if $(A_k)_{k\ge 1} \in \ell^\infty(SL_n(\R))$ is such that $d\big(A_k,\bigcup_q V_q\big)>\delta$ for a positive asymptotic density of integers $k$, then $\overline\tau^\infty\big( (A_k)_{k\ge 1}\big) \le 1/\ell$.
\end{theoreme}

The existence of open and dense sets of sequences on which $\overline\tau^\infty$ is smaller than $\varep$ is easily deduced from the first part of the theorem by applying the continuity of $\overline\tau^k$. By using the continuity of $\tau$ on a generic subset (Corollary~\ref{conttaukk}), one gets the same statement for $\tau$.

\begin{rem}
It is possible to prove that the intersection of the submanifold $V_q$ with $O_n(\R)$ is again a positive codimension submanifold of $O_n(\R)$. From this fact we deduce the fact announced in the introduction that Theorem \ref{ConjPrincip} remains true for generic sequences of isometries (see also \cite{Gui15b} for an alternative proof).
\end{rem}

The last part of this theorem deals with $\overline\tau$; one can easily get a similar statement for $\tau$ by combining it with Lemma \ref{LemTauxDiff2}. This leads to the following consequence for sequences of random iid matrices.

\begin{coro}\label{CoroiidMat}
Let $\mu$ be a measure on $SL_n(\R)$ with nonempty interior support. Then, for almost every sequence $(A_k)_{k\ge 1}$ of independent identically distributed matrices with distribution $\mu$, we have $\tau^\infty\big( (A_k)_{k\ge 1}\big)=0$.
\end{coro}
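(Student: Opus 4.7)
Plan: This corollary follows almost directly from the ``Moreover'' part of Theorem~\ref{ConjPrincip}. The strategy is to exhibit a region carrying positive $\mu$-mass that sits at a definite distance from the exceptional set $\bigcup_q V_q$, and then to invoke the strong law of large numbers.

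The first step is purely topological/measure-theoretic. Because the family $(V_q)$ is locally finite and each $V_q$ is a closed positive-codimension submanifold of $SL_n(\R)$, the union $\bigcup_q V_q$ is a closed subset with empty interior (by Baire category). Since $\mathrm{supp}(\mu)$ contains a nonempty open set, it contains an open ball $B$, and I can then choose a smaller open ball $B'$ with $\overline{B'}\subset B$, compact, and disjoint from $\bigcup_q V_q$. Define
\[
p_0 := \mu(B') > 0, \qquad \delta_0 := d\!\left(\overline{B'},\, \bigcup_q V_q\right) > 0.
\]

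The second step is probabilistic. The indicators $\mathbf{1}_{B'}(A_k)$ are i.i.d.\ Bernoulli of parameter $p_0>0$, so the strong law of large numbers yields that, almost surely, the asymptotic density of $\{k\ge 1 : A_k \in B'\}$ equals $p_0>0$. On this full-measure event, a positive density of indices $k$ satisfy $d(A_k,\bigcup_q V_q)\ge \delta_0$, so the hypothesis of the second part of Theorem~\ref{ConjPrincip} is fulfilled with $\delta=\delta_0$. Applying that theorem with any $\varep>0$ gives $\tau^\infty((A_k)_k)\le \varep$; letting $\varep\to 0$ concludes.

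The only mildly delicate point, which I would expect to be the one step requiring real care in a fully rigorous writeup, is that Theorem~\ref{ConjPrincip} is stated for sequences in $\ell^\infty(SL_n(\R))$, while an i.i.d.\ sequence drawn from a general $\mu$ with unbounded support is a.s.\ unbounded. If $\mathrm{supp}(\mu)$ is bounded the issue does not arise. In general one can handle this by a truncation argument: the loss of density accumulated at the $k$'s with $A_k\in\overline{B'}$ is uniformly controlled by $\delta_0$ together with the bound $\sup_{x\in\overline{B'}}\|x\|$, and since the sequence $\tau^k$ is monotone non-increasing in $k$, the intermediate ``bad'' (possibly huge) matrices can only further decrease $\tau^k$, hence do not spoil the conclusion $\tau^\infty((A_k)_k)=0$.
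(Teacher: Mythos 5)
Your main argument---Baire category to find an open ball $B'$ inside $\operatorname{supp}\mu$ at a positive distance $\delta_0$ from the closed, nowhere-dense set $\bigcup_q V_q$, then the strong law of large numbers to produce a.s.\ a positive density of indices $k$ with $A_k\in B'$, then the ``Moreover'' part of Theorem~\ref{ConjPrincip}---is precisely the deduction the paper has in mind, and it is complete whenever $\operatorname{supp}\mu$ is bounded, so that the sequence a.s.\ belongs to $\ell^\infty(SL_n(\R))$.

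You are also right to flag the $\ell^\infty$ hypothesis as the genuinely delicate point, but the ``truncation'' sentence you offer does not yet close it. The proof of the ``Moreover'' part propagates the overlap bound $D_0$ across the bad steps via Lemma~\ref{LemDeFin2}, whose constant $\varep$ degenerates as the norm bound $M$ grows, so arbitrarily large $A_k$'s really can wipe out the quantity on which the next good step acts; ``bad matrices only decrease $\tau^k$'' is true but does not by itself authorize an appeal to a theorem stated for bounded sequences. The clean repair is to avoid propagating $D_0$ through bad steps at all. For each fixed $\ell\ge 2$, the events $E_j=\{A_{j(\ell-1)+1},\dots,A_{(j+1)(\ell-1)}\in B'\}$ are independent with probability $\mu(B')^{\ell-1}>0$, so by the second Borel--Cantelli lemma a.s.\ there are infinitely many blocks of $\ell-1$ \emph{consecutive} indices in $B'$. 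Inside such a block every matrix has norm at most $M:=\sup_{\overline{B'}}\|\cdot\|$ and lies at distance $>\delta_0$ from $\bigcup_q V_q$; Lemma~\ref{EstimTauxN} re-derives the overlap bound from the current density $\overline\tau^k$ alone (it makes no reference to how the lattice $\widetilde\Lambda_k$ was produced, so the earlier huge matrices are irrelevant), and $\ell-1$ successive applications of Lemma~\ref{LemDeFin} with this fixed $M$ and $\delta_0$ yield $\overline\tau^{k+\ell-1}-1/\ell\le\lambda_\ell\big(\overline\tau^k-1/\ell\big)$ exactly as in Equation~\eqref{EqFinChap}. Since $\tau^k$ is non-increasing and a.s.\ infinitely many such blocks occur, $\tau^\infty\le 1/\ell$; letting $\ell\to\infty$ gives $\tau^\infty=0$ without any global boundedness assumption on the sequence.
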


By applying Proposition \ref{PropMemMat}, one gets the following corollary.

\begin{coro}\label{CoroMemMat}
Let $n\ge 2$ and $G$ be one of the groups $GL_n(\R)$, $SL_n(\R)$ or $SO_n(\R)$. Then, for a generic/almost all $A\in G$, one has $\tau^\infty\big( (A)\big)=0$.
\end{coro}


We now come to the proof of Theorem \ref{ConjPrincip}. The following lemma is a generalization of Lemma~\ref{EstimTaux}. It expresses that if the density of $W^k + \widetilde\Lambda_k$ is bigger than $1/\ell$, then there cannot be more than $\ell$ disjoint translates of $W^k + \widetilde\Lambda_k$, and gives an estimation on the size of these intersections.

\begin{lemme}\label{EstimTauxN}
Let $W = ]-1/2,1/2]^m$ and $\Lambda\subset \R^m$ be a lattice with covolume 1 such that $D_c(W + \Lambda)\ge 1/\ell$. Then, for every collection $v_1,\cdots,v_\ell\in\R^m$, there exists $i\neq i'\in \llbracket 1,\ell\rrbracket$ such that
\[D_c\big((W + \Lambda + v_i)\cap (W + \Lambda + v_{i'})\big) \ge \frac{\ell D_c(W + \Lambda)-1}{\ell(\ell-1)}.\]
\end{lemme}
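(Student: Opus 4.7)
The plan is to mimic the proof of Lemma~\ref{EstimTaux}, working on the quotient torus $\R^k/\Lambda$, but replacing the ``two overlapping sets'' argument by a Bonferroni inequality applied to $\ell$ translates. Write $D = D_c(W^k + \Lambda)$ and, for each $i$, let $X_i \subset \R^k/\Lambda$ be the projection of $W^k + v_i$. Just as in the proof of Lemma~\ref{EstimTaux}, translation invariance of Lebesgue measure and the fact that $W^k$ is a fundamental-domain-sized set give $\Leb(X_i) = D$ for every $i$, and the hypothesis that $\Lambda$ has covolume $1$ means the ambient space $\R^k/\Lambda$ has volume $1$.

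Next I would use the elementary Bonferroni bound
\[\Leb\Bigl(\bigcup_{i=1}^{\ell} X_i\Bigr) \;\ge\; \sum_{i=1}^{\ell} \Leb(X_i) \;-\; \sum_{1\le i<i'\le \ell} \Leb(X_i \cap X_{i'}),\]
together with the trivial upper bound $\Leb(\bigcup_i X_i)\le 1$. Rearranging yields
\[\sum_{1\le i<i'\le \ell} \Leb(X_i \cap X_{i'}) \;\ge\; \ell D - 1,\]
and since the sum runs over $\binom{\ell}{2} = \ell(\ell-1)/2$ unordered pairs, the pigeonhole principle produces a pair $i\neq i'$ with
\[\Leb(X_i \cap X_{i'}) \;\ge\; \frac{2(\ell D - 1)}{\ell(\ell-1)}.\]

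Finally I would lift this back to $\R^k$: because projection by $\Lambda$ preserves densities of $\Lambda$-invariant sets, and $(W^k + \Lambda + v_i) \cap (W^k + \Lambda + v_{i'})$ is $\Lambda$-invariant with projection $X_i \cap X_{i'}$, one has
\[D_c\bigl((W^k + \Lambda + v_i) \cap (W^k + \Lambda + v_{i'})\bigr) = \Leb(X_i \cap X_{i'}),\]
which gives the stated inequality. There is no genuine obstacle here beyond verifying that the translates in $\R^k$ really do descend to translates of $X_1$ on the quotient with the same measure (immediate from translation invariance) and checking the direction of the Bonferroni inequality; the only mildly subtle point is the combinatorial bookkeeping that the $\binom{\ell}{2}$ in the denominator, rather than $\ell(\ell-1)$, is what yields the factor $2$ in the numerator of the conclusion, matching exactly the constant in Lemma~\ref{EstimTaux} when $\ell=2$.
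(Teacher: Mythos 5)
Your proof is correct and follows essentially the same route as the paper: pass to the quotient $\R^k/\Lambda$, apply the truncated inclusion–exclusion (Bonferroni) bound to the $\ell$ translates to get $\sum_{i<i'}\Leb(X_i\cap X_{i'})\ge \ell D-1$, then pigeonhole over the $\binom{\ell}{2}$ pairs, and lift back. The paper phrases the Bonferroni step more loosely (``the volume of the points belonging to at least two different sets is bigger than $\ell D-1$'') but the argument and the constants are the same.
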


\begin{proof}[Proof of Lemma~\ref{EstimTauxN}]
For every $v\in\R^m$, the density $D_c(W + \Lambda+v)$ is equal to the volume of the projection of $W$ on the quotient space $\R^m/\Lambda$. As this volume is greater than $1/\ell$, and as the covolume of $\Lambda$ is 1, the projections of the $W + v_i$ overlap. Quantitatively, using Bonferroni inequality,
\[\sum_i\Leb_\Lambda\big(W + v_i\big) \le \Leb_\Lambda\Big(\bigcup_i (W + v_i)\Big) + \sum_{i\neq j} \Leb_\Lambda\big((W + v_i)\cap (W + v_j)\big),\]
one gets that there exists $i\neq j$ such that 
\[\Leb_\Lambda\big((W + v_i)\cap (W + v_j)\big) \ge \frac{\ell\Leb_\Lambda(W)-1}{\ell(\ell-1)}.\]
Returning to the whole space $\R^m$, we get the conclusion of the lemma.
\end{proof}

Proof of Theorem~\ref{ConjPrincip} will reduce to the following technical lemma.

\begin{lemme}\label{LemDeFin}
Fix $M>0$ and $L\ge 2$. Then there exists a finite collection $(V_q)_q$ of positive codimension submanifolds of $SL_n(\R)$ such that for every $\delta>0$, there exists $\varep>0$ satisfying:\\
For any $k\ge 0$ and $B_1,\cdots,B_k\in SL_n(\R)$ such that there exist $\ell\le L$ and $D_k^\ell>0$ such that for every collection of vectors $v_1,\cdots,v_{\ell} \in\R^n$, there exists $j\neq j'\in\llbracket 1,\ell\rrbracket$ such that
\begin{equation}\label{Deca}
D_c\bigg(\Big(W^k + \widetilde\Lambda_k + (0^{(k-1)n} , v_j) \Big) \cap \Big(W^k + \widetilde\Lambda_k + (0^{(k-1)n} , v_{j'}) \Big)\bigg)\ge D_k^\ell,
\end{equation}
where $\widetilde\Lambda_k$ is spanned by $\widetilde M_{B_1,\cdots,B_k}$ (see \eqref{DefMat2}).\\
Then for every matrix $B\in SL_n(\R)$ of norm smaller than $M$ such that $d(B,V_q)>\delta$ for every $q$, if we denote by $\widetilde\Lambda_{k+1}$ the lattice spanned by the matrix $\widetilde M_{B_1,\cdots,B_k,B}$,
\begin{enumerate}[(1)]
\item either $D_c(W^{k+1} + \widetilde\Lambda_{k+1}) \le D_c(W^{k} + \widetilde\Lambda_{k}) - \varep D_k^\ell$;
\item or for every collection of vectors $w_1,\cdots,w_{\ell-1} \in\R^n$, there exists $i\neq i'\in\llbracket 1,\ell-1\rrbracket$ such that
\[D_c\bigg(\Big(W^{k+1} + \widetilde\Lambda_{k+1} + (0^{kn} , w_i) \Big) \cap \Big(W^{k+1} + \widetilde\Lambda_{k+1} + (0^{kn} , w_{i'}) \Big)\bigg)\ge \varep D_k^\ell.\]
\end{enumerate}
Moreover, if $\ell = 2$, then we have automatically the conclusion (1) of the lemma (as a particular case one gets Proposition~\ref{PerLin1}).
\end{lemme}

Conclusion (1) says that the rate of injectivity decreases between times $k$ and $k+1$, and conclusion (2) expresses that the number of possible disjoint translates of $W^k + \widetilde\Lambda_k$ decreases between times $k$ and $k+1$. In a certain sense, conclusion (1) corresponds to an hyperbolic behaviour, and conclusion (2) to a diffusion between times $k$ and $k+1$. The same strategy of proof leads to the following variation of the lemma, which is true without condition on the matrix $B$.

\begin{lemme}\label{LemDeFin2}
Fix $M>0$ and $\ell\ge 2$. Then there exists $\varep>0$ such that for any $k\in\N$, if $D_k^\ell>0$ is such that for every collection of vectors $v_1,\cdots,v_{\ell} \in\R^n$, there exists $j,j'\in\llbracket 1,\ell\rrbracket$ such that
\[D_c\bigg(\Big(W^k + \widetilde\Lambda_k + (0^{(k-1)n} , v_j) \Big) \cap \Big(W^k + \widetilde\Lambda_k + (0^{(k-1)n} , v_{j'}) \Big)\bigg)\ge D_k^\ell,\]
then for every matrix $B\in SL_n(\R)$ of norm smaller than $M$, if we denote by $\widetilde\Lambda_{k+1}$ the lattice spanned by the matrix $\widetilde M_{B_1,\cdots,B_k,B}$, then for every collection of vectors $w_1,\cdots,w_{\ell} \in\R^n$, there exists $i\neq i'\in\llbracket 1,\ell\rrbracket$ such that
\[D_c\bigg(\Big(W^{k+1} + \widetilde\Lambda_{k+1} + (0^{kn} , w_i) \Big) \cap \Big(W^{k+1} + \widetilde\Lambda_{k+1} + (0^{kn} , w_{i'}) \Big)\bigg)\ge \varep D_k^\ell.\]
\end{lemme}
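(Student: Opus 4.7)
The plan is to establish the lemma by slicing $\R^{n(k+1)}$ via its last $n$ coordinates and applying the level-$k$ hypothesis fiber-by-fiber. Writing $\R^{n(k+1)}=\R^{nk}\times\R^n$ with $y$ denoting the last $n$ coordinates, the decomposition
\[
\widetilde\Lambda_{k+1}=(\widetilde\Lambda_k,\,0^n)+\begin{pmatrix}0^{n(k-1)}\\-\Id\\ B\end{pmatrix}\Z^n
\]
(already used in the proof of Proposition~\ref{PerLin1}) shows that the fiber over $y$ of $W^{k+1}+\widetilde\Lambda_{k+1}+(0^{kn},w_i)$ is, for almost every $y$, a finite union of translates of $W^k+\widetilde\Lambda_k$ by $-(0^{n(k-1)},z)$ as $z$ ranges over $Z(y,w_i):=\{z\in\Z^n:y-w_i-Bz\in W^1\}$. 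The covering $\{Bz+W^1\}_{z\in\Z^n}$ has multiplicity uniformly bounded in terms of $\|B\|\le M$, and for generic $B$ it is a tiling, in which case each $Z(y,w_i)=\{z_i(y)\}$ is a singleton; this is the case I would treat first.

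I would then apply the hypothesis pointwise. Fix $y$ in the co-null set on which each $z_i(y)$ is well defined. If $z_i(y)=z_{i'}(y)$ for some pair $i\neq i'$, the two corresponding fibers coincide and their intersection has density $D_c(W^k+\widetilde\Lambda_k)$ in $\R^{nk}$; this density is at least $D_0$ by the hypothesis applied to the constant family $v_1=\dots=v_\ell$ (any choice of $j,j'$ then gives self-intersection equal to $D_c(W^k+\widetilde\Lambda_k)$). Otherwise the $\ell$ integer vectors $v_i:=-z_i(y)\in\Z^n$ are pairwise distinct, and the hypothesis applied to them produces a distinct pair $j(y)\neq j'(y)$ for which the fiber intersection density $\rho_{j(y),j'(y)}(y)$ is $\ge D_0$. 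In either case, a measurable selection yields, for every such $y$, a pair $(j(y),j'(y))$ with $j(y)\neq j'(y)$ and fiber intersection density $\rho_{j(y),j'(y)}(y)\ge D_0$.

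It remains to average and pigeonhole. Each function $\rho_{i,i'}(y)$ is $B\Z^n$-periodic in $y$, since shifting $y$ by $Bz_0$ translates every $z\in Z(y,w_i)$ by $z_0$, leaving the intersection of the two fibers globally translated but of unchanged density. Fubini then identifies the density in $\R^{n(k+1)}$ of the intersection $S_{i,i'}$ of the $i$-th and $i'$-th level-$(k+1)$ translates with the average of $\rho_{i,i'}$ over any fundamental domain (volume~$1$) of $B\Z^n$. Summing over the $\binom{\ell}{2}$ unordered distinct pairs and using the pointwise bound from the previous paragraph,
\[
\sum_{\{i,i'\}:\,i\neq i'} D_c(S_{i,i'})\ \ge\ \int \max_{i\neq i'}\rho_{i,i'}(y)\,dy\ \ge\ D_0,
\]
so a pigeonhole yields a pair $(i_0,i_0')$ with $D_c(S_{i_0,i_0'})\ge D_0/\binom{\ell}{2}$, giving $\varep=2/(\ell(\ell-1))$. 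The main obstacle is handling the non-tiling case where several $z$ simultaneously satisfy $y-w_i-Bz\in W^1$: overlaps and gaps of the covering $\{Bz+W^1\}_z$ may appear, with total measure controlled by $\|B\|\le M$, and these contributions must be absorbed into a slightly smaller $\varep$ to keep the bound uniform in $B$. The key simplification relative to Lemma~\ref{LemDeFin} is that no perturbation step of $B$ is required, which is precisely why no avoidance condition (no submanifold $V_q$) appears in the statement.
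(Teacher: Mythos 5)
Your slicing-over-the-last-$n$-coordinates strategy is elegant and genuinely different from the paper's: the paper (in the proof of Lemma~\ref{LemDeFin}, which is the model for Lemma~\ref{LemDeFin2}) works with the set $U$ of points of multiplicity $\ge\ell$ in the $\ell$-fold collection $(Bv+w_i+W^1)_{v,i}$ and then splits into the two cases $i_j=i_{j'}$ / $i_j\neq i_{j'}$, whereas you fiber the level-$(k+1)$ sets over $y\in\R^n$ and invoke the level-$k$ hypothesis on the integer selectors $z_i(y)$. In the tiling case your argument is sound, and the observation that a coincidence $z_j(y)=z_{j'}(y)$ is harmless (the fibers then agree, giving intersection density $D_c(W^k+\widetilde\Lambda_k)\ge D_0$) neatly sidesteps the paper's case distinction.

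However, the non-tiling case is not a small correction that can be absorbed into a smaller $\varep$: it is where the statement as printed actually fails. Take $n=2$, $A_1=\cdots=A_k=\Id$ (so $\widetilde\Lambda_k=\Z^{nk}$, $W^k+\widetilde\Lambda_k=\R^{nk}$, and the hypothesis holds with $D_0=1$ for any $\ell$), $B=\operatorname{diag}(L,1/L)$ with $L$ large but $\le M$, and for $\ell=2$ take $w_1=(0,0)$, $w_2=(1,0)$. Then $Z(y,w_1)\neq\emptyset$ exactly when $y_1\in(-1/2,1/2]\pmod L$ and $Z(y,w_2)\neq\emptyset$ exactly when $y_1\in(1/2,3/2]\pmod L$; these are disjoint, so for almost every $y$ at least one fiber is empty, $\rho_{1,2}\equiv 0$, and $D_c(S_{1,2})=0$. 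There is no $\varep>0$ making the conclusion hold. What \emph{does} happen in this example is that $D_c(W^{k+1}+\widetilde\Lambda_{k+1})$ drops sharply below $D_c(W^{k}+\widetilde\Lambda_{k})$: the statement you should be proving is the two-alternative version carried over from Lemma~\ref{LemDeFin} — either the density drops by $\varep D_0$, or some pair $i\neq i'$ of translates intersects with density $\ge\varep D_0$ — and it is this either/or that the paper's phrase ``same strategy of proof'' is pointing to. Your fiber argument can be repaired along these lines: the measure of $y$'s on which some $Z(y,w_i)=\emptyset$ quantifies the density drop (the corresponding fiber of $W^{k+1}+\widetilde\Lambda_{k+1}$ is genuinely missing mass there), while on the complementary set the argument you wrote goes through. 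One further point to keep an eye on: you lean on the hypothesis to produce $j(y)\neq j'(y)$, which requires reading the ``$j,j'$'' in the level-$k$ hypothesis as ``$j\neq j'$'' (as in Lemma~\ref{LemDeFin}); with $j=j'$ allowed the hypothesis is essentially vacuous and your inductive use of it would not be coherent.
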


We begin by defining the sets $V_q$. A collection of cubes of $\R^n$ is called a \emph{$k$-fold tiling}\footnote{See also Furtw{\"a}ngler conjecture \cite{MR1550530}, proved false by G.~Haj{\'o}s. R.~Robinson gave a characterization of such $k$-fold tilings in some cases, see \cite{MR526466} or \cite[p. 29]{MR1311249}.} if any point of $\R^n$ belongs to exactly $k$ different cubes, except from those belonging to the boundary of one cube.

\begin{lemme}\label{Kfold}
Let $M>0$. For any matrix $B\in SL_n(\R)$, consider the collection $(Bv + W^1)_{v\in\Z^n}$ of translates of the unit cube by the lattice spanned by $B$. Now, take $\ell-1$ vectors $w_1,\cdots,w_{\ell-1} \in\R^n$ and consider the collection
\begin{equation}\label{EqColl}
(Bv + w_i + W^1)_{v\in\Z^n,\,1\le i\le\ell-1}
\end{equation}
of translates of the first collection by the vectors $w_i$. Then, the set of matrices $B\in SL_n(\R)$ of norm $\le M$ such that the collection \eqref{EqColl} forms a $\ell-1$-fold tiling is contained in a finite union of positive codimension submanifolds of $SL_n(\R)$, which we denote by $V_q$.
\end{lemme}

\begin{center}
\begin{tikzpicture}[scale=.95]
\draw[fill=blue!15!white] (-.5,-.5) rectangle (.5,.5);
\draw[fill=blue!15!white] (.5,-.4) rectangle (1.5,.6);
\draw[fill=blue!15!white] (1.5,-.3) rectangle (2.5,.7);
\draw[fill=blue!15!white] (2.5,-.6) rectangle (3.5,.4);
\draw[color=blue!25!black] (0,.5) node[above]{$W_0$};
\draw[color=blue!25!black] (1,.6) node[above]{$W_1$};
\draw[color=blue!25!black] (2,.7) node[above]{$W_2$};
\draw[color=blue!25!black] (3,.4) node[above]{$W_3$};
\foreach\i in {0,...,3}{
\draw(\i+.5,0) node{$\times$};
}
\draw(0,0) node{$+$};
\end{tikzpicture}
\end{center}

\begin{proof}[Proof of Lemma \ref{Kfold}]
Indeed, consider a matrix $B$ and $w_1,\cdots,w_{\ell-1} \in\R^n$ such that the collection \eqref{EqColl} forms a $\ell-1$-fold tiling. Translating the whole family if necessary, one can suppose that $w_1=0$. Then, easy geometrical arguments show that for every $i$, there exists a cube $W_i$ of the collection \eqref{EqColl} such that $(-1/2+i,0,0,\dots)\in\R^n$ belongs to the left face of $W_i$ (with the convention $W_0 = W^1$). By the pigeonhole principle, two cubes of the collection $(W_i)_{0\le i \le \ell-1}$ belong to the same translated family of cubes: there exists $i_0\in\llbracket 1,\ell-1\rrbracket$, $v,v'\in\Z^n$ and $i\neq j\in\llbracket 0,\ell-1\rrbracket$ such that $W_i = Bv + w_{i_0} + W^1$ and $W_j = Bv' + w_{i_0} + W^1$. But $W_i$ and $W_j$ differ of a translation of the form $(j-i,*,*,\dots)$, thus the first coordinate of $B(v-v')$ has to be an integer smaller than $\ell$. Such a reasoning can be made on each coordinate, and the obtained conditions are about different coefficients of the matrix $B$. Hence, the set of matrices of norm $\le M$ forming a $\ell-1$-fold tiling is contained in a finite union of codimension $\ge n$ submanifolds of $M_n(\R)$, thus the restriction of it to $SL_n(\R)$ is of codimension $\ge n-1$.
\end{proof}

\begin{proof}[Proof of Lemma \ref{LemDeFin}]
Let $\delta>0$ and $M>0$. We define $\mathcal O_\varep$ as the set of matrices $B\in SL_n(\R)$ satisfying: for any $w_1,\cdots,w_{\ell-1} \in\R^n$, there exists a set $U\subset\R^n/B\Z^n$ of measure $>\varep$ such that every point of $U$ belongs to at least $\ell$ different cubes of the collection  \eqref{EqColl}. In other words, every $x\in\R^n$ whose projection $\overline x$ on $\R^n/B\Z^n$ belongs to $U$ satisfies
\begin{equation}\label{EqLemDeFin}
\sum_{i=1}^{\ell-1} \sum_{v\in \Z^n}  \1_{x\in Bv + w_i + W^1} \ge \ell.
\end{equation}
By definition, the (monotonic) union of $\mathcal O_\varep$ over $\varep>0$ is the complement of the union of the submanifolds $V_q$. By compactness of the set of matrices of $SL_n(\R)$ of norm smaller than $M$, and continuity of the map $B\mapsto \Leb(U)$, this implies the existence of $\varep = \varep(\delta,M,\ell)>0$ such that $\mathcal O = \mathcal O_\varep$ is contained in the complement of the union over $q$ of the $\delta$-neighbourhoods of $V_q$.

We then choose $B\in \mathcal O$ and a collection of vectors $w_1,\cdots,w_{\ell-1} \in\R^n$. Let $x\in\R^n$ be such that $\overline x\in U$. By hypothesis on the matrix $B$, $x$ satisfies Equation~\eqref{EqLemDeFin}, so there exists $\ell+1$ integer vectors $v_1,\cdots,v_{\ell}$ and $\ell$ indices $i_1,\cdots,i_{\ell}$ such that the couples $(v_j,i_j)$ are pairwise distinct and that
\begin{equation}\label{EqIxIn}
\forall j\in\llbracket 1,\ell\rrbracket,\quad x\in W^1 + Bv_j + w_{i_j}.
\end{equation}

\begin{figure}
\begin{minipage}[t]{.48\linewidth}
\begin{center}
\begin{tikzpicture}[scale=.95]
\draw[color=gray] (-.8,3) -- (5,3);
\draw[very thick,blue, |-|] (-.5,3) -- (.5,3);
\draw[very thick,blue, |-|] (-.5+2.8,3) -- (.5+2.8,3);

\draw[dashed] (-.8,.4) -- (5,.4);
\draw(-.8,.4) node[left]{$x$};
\draw[dashed] (2.4,-1.5) -- (2.4,1.7);
\draw (2.4,-1.5) node[below]{$y$};

\clip (-.6,-1.3) rectangle (4.8,1.5);
\foreach\j in {0,...,1}{
\draw[fill=blue,opacity=.25] (-.5+2.8*\j,-.5) rectangle (.5+2.8*\j,.5);
}
\foreach\i in {-3,...,5}{
\foreach\j in {-1,...,3}{
\draw[fill=blue,opacity=.20] (-.5+\i-1+2.8*\j,-.5+.4*\i-.4) rectangle (.5+\i-1+2.8*\j,.5+.4*\i-.4);
\draw (-.5+\i-1+2.8*\j,-.5+.4*\i-.4) rectangle (.5+\i-1+2.8*\j,.5+.4*\i-.4);
}}
\end{tikzpicture}
\caption[First case of Lemma \ref{LemDeFin}]{First case of Lemma \ref{LemDeFin}, in the case $\ell=3$: the set $W^{k+1} + \widetilde\Lambda_{k+1}$ auto-intersects.}\label{FigLemDeFin}
\end{center}
\end{minipage}\hfill
\begin{minipage}[t]{.48\linewidth}
\begin{center}
\begin{tikzpicture}[scale=.95]
\draw[color=gray] (-.8,3) -- (5,3);
\draw[very thick,blue, |-|] (-.5,3) -- (.5,3);
\draw[very thick,blue, |-|] (-.5+2.8,3) -- (.5+2.8,3);

\draw[dashed] (-.8,.4) -- (5,.4);
\draw(-.8,.4) node[left]{$x$};
\draw[dashed] (1.4,-1.5) -- (1.4,1.7);
\draw (1.4,-1.5) node[below]{$y$};

\clip (-.6,-1.3) rectangle (4.8,1.5);
\foreach\j in {0,...,1}{
\draw[fill=blue,opacity=.25] (-.5+2.8*\j,-.5) rectangle (.5+2.8*\j,.5);
}
\foreach\i in {-3,...,2}{
\foreach\j in {-1,...,2}{
\draw[fill=blue,opacity=.20] (-.5+\i+2.8*\j,-.5+.8*\i) rectangle (.5+\i+2.8*\j,.5+.8*\i);
\draw (-.5+\i+2.8*\j,-.5+.8*\i) rectangle (.5+\i+2.8*\j,.5+.8*\i);
\draw[fill=gray,opacity=.15] (-.5+\i+2.8*\j,-.5+.8*\i+1.2) rectangle (.5+\i+2.8*\j,.5+.8*\i+1.2);
\draw (-.5+\i+2.8*\j,-.5+.8*\i+1.2) rectangle (.5+\i+2.8*\j,.5+.8*\i+1.2);
}}
\end{tikzpicture}
\caption[Second case of Lemma \ref{LemDeFin}]{Second case of Lemma \ref{LemDeFin}, in the case $\ell=3$: two distinct vertical translates of $W^{k+1} + \widetilde\Lambda_{k+1}$ intersect (the first translate contains the dark blue thickening of $W^k + \widetilde\Lambda_k$, the second is represented in grey).}\label{FigLemDeFin2}
\end{center}
\end{minipage}
\end{figure}

The following formula makes the link between what happens in the $n$ last and in the $n$ penultimates coordinates of $\R^{n(k+1)}$:
\begin{equation}\label{EqChangeDim}
W^{k+1} + \widetilde\Lambda_{k+1} + \big(0^{(k-1)n},0^n,w_{i_j}\big) = W^{k+1} + \widetilde\Lambda_{k+1} + \big(0^{(k-1)n},-v_j,w_{i_j} + Bv_j\big)
\end{equation}
(we add a vector belonging to $\widetilde\Lambda_{k+1}$). 

We now apply the hypothesis of the lemma to the vectors $-v_1,\cdots,-v_{\ell}$: there exists $j\neq j'\in\llbracket 1,\ell\rrbracket$ such that
\begin{equation}\label{EqInter}
D_c\bigg(\Big(W^k + \widetilde\Lambda_k + (0^{(k-1)n} , -v_j)\Big) \cap \Big(W^k + \widetilde\Lambda_k + (0^{(k-1)n} , -v_{j'})\Big)\bigg) \ge D_k^\ell.
\end{equation}
Let $y$ be a point belonging to this intersection. Applying Equations~\eqref{EqIxIn} and \eqref{EqInter}, we get	that
\begin{equation}\label{EqLemDeFinCentr}
(y,x) \in W^{k+1} + \big(\widetilde\Lambda_k,0^n\big) + \big(0^{(k-1)n}, -v_j, w_{i_j} + Bv_j\big)
\end{equation}
and the same for $j'$ (this means that the point $(y,x)$ belongs to two different translates of the cube, see Figures \ref{FigLemDeFin} and \ref{FigLemDeFin2}).

Two different cases can occur.
\begin{enumerate}[(i)]
\item Either $i_j = i_{j'}$ (that is, the translation vectors $w_{i_j}$ and $w_{i_{j'}}$ are equal). As a consequence, applying Equation~\eqref{EqLemDeFinCentr}, we have
\begin{align*}
(y,x) + \big(0^{(k-1)n}, v_j, -Bv_j-w_{i_j}\big) \in & \Big( W^{k+1} + \big(\widetilde\Lambda_k,0^n\big) \Big)\cap\\
                                                     & \Big( W^{k+1} + \big(\widetilde\Lambda_k,0^n\big) + v'\Big),
\end{align*}
with
\[v' = \big(0^{(k-1)n}, -(v_{j'}-v_j), B(v_{j'}-v_j)\big) \in \widetilde\Lambda_{k+1} \setminus\widetilde\Lambda_k.\]
This implies that the set $W^{k+1} + \widetilde\Lambda_{k+1}$ auto-intersects (see Figure~\ref{FigLemDeFin}).
\item Or $i_j \neq i_{j'}$ (that is, $w_{i_j}\neq w_{i_{j'}}$). Combining Equations~\eqref{EqLemDeFinCentr} and \eqref{EqChangeDim} (note that $\big(\widetilde\Lambda_k,0^n\big) \subset \widetilde\Lambda_{k+1}$), we get
\[(y,x)\in \Big(W^{k+1} + \widetilde\Lambda_{k+1} + \big(0^{kn}, w_{i_j}\big)\Big) \cap \Big(W^{k+1} + \widetilde\Lambda_{k+1} + \big(0^{kn},w_{i_{j'}}\big)\Big).\]
This implies that two distinct vertical translates of $W^{k+1} + \widetilde\Lambda_{k+1}$ intersect (see Figure~\ref{FigLemDeFin2}).
\end{enumerate}

We have built points in some intersections, we now estimate the size of these intersections. Again, we have two cases.
\begin{enumerate}[(1)]
\item Either for more than the half of $\overline x \in U$ (for Lebesgue measure), we are in the case (i). To each of such $\overline x$ corresponds a translation vector $w_i$. We choose $w_i$ such that the set of corresponding $\overline x$ has the biggest measure; this measure is bigger than $\varep/\big(2(\ell-1)\big)\ge \varep/(2\ell)$. Reasoning as in the proof of Proposition~\ref{PerLin1}, and in particular using the notations of Corollary~\ref{CoroSansNom}, we get that the density $\widetilde C_1$ of the auto-intersection of $W^{k+1} + \widetilde\Lambda_{k+1} + (0,w_i)$ is bigger than $D_k^\ell\varep/(2\ell)$. This leads to (using  Corollary~\ref{CoroSansNom})
\[D_c(W^{k+1} + \widetilde\Lambda_{k+1}) \le D_c(W^{k} + \widetilde\Lambda_{k}) - \frac{D_k^\ell \varep}{4\ell}.\]
In this case, we get the conclusion (1) of the lemma.
\item  Or for more than the half of $\overline x \in U$, we are in the case (ii). Choosing the couple $(w_i,w_{i'})$ such that the measure of the set of corresponding $\overline x$ is the greatest, we get 
\[D_c\bigg(\Big(W^{k+1} + \widetilde\Lambda_{k+1} + (0^{kn} , w_i) \Big) \cap \Big(W^{k+1} + \widetilde\Lambda_{k+1} + (0^{kn} , w_{i'}) \Big)\bigg)\ge \frac{D_k^\ell \varep}{(\ell-1)(\ell-2)}.\]
In this case, we get the conclusion (2) of the lemma.
\end{enumerate}
\end{proof}

\begin{figure}[t]
\begin{center}
\includegraphics[width=.8\linewidth]{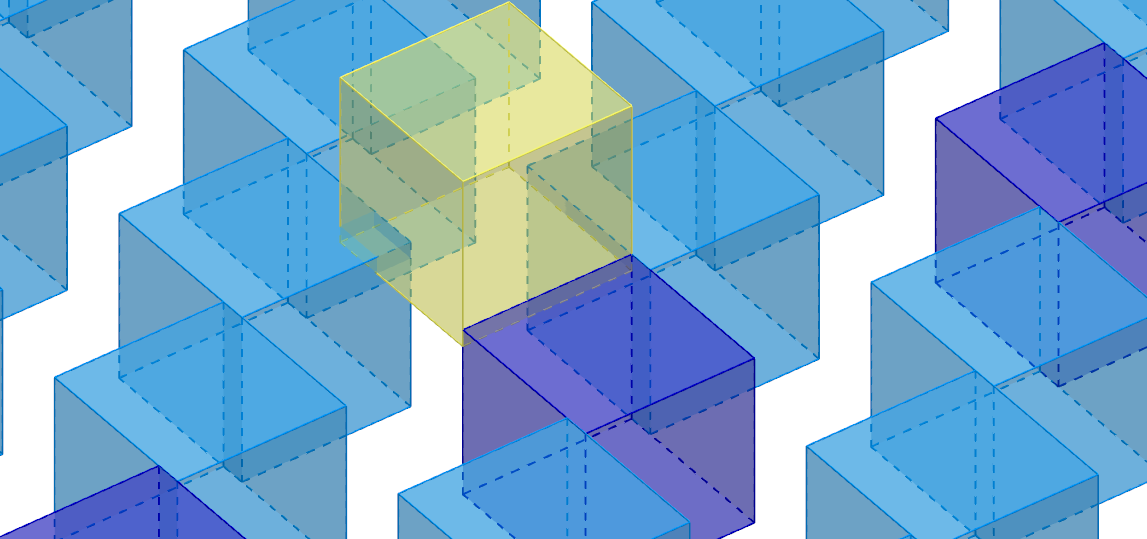}
\caption[Intersection of cubes, rate bigger than $1/3$]{Intersection of cubes in the case where the rate is bigger than $1/3$. The thickening of the cubes of $W^k + \widetilde\Lambda_k$ is represented in dark blue and the thickening of the rest of the cubes of $W^{k+1} + \widetilde\Lambda_{k+1}$ is represented in light blue; we have also represented another cube of $W^{k+2} + \widetilde\Lambda_{k+2}$ in yellow. We see that if the projection on the $z$-axis of the centre of the yellow cube is smaller than 1, then there is automatically an intersection between this cube and one of the blue cubes.}\label{FigInterCubes3DSupDemi}
\end{center}
\end{figure}

We can now prove Theorem~\ref{ConjPrincip}.

\begin{proof}[Proof of Theorem~\ref{ConjPrincip}]
As in the proof of Proposition~\ref{PerLin1}, we proceed by induction on $k$. Suppose that $\widetilde\Lambda_k$ is such that $D_c(W^k + \widetilde\Lambda_k)>1/\ell$. Then, Lemma~\ref{EstimTauxN} ensures that it is not possible to have $\ell$ disjoint translates of $W^k + \widetilde\Lambda_k$. Applying Lemma~\ref{LemDeFin}, we obtain that either $D_c(W^{k+1} + \widetilde\Lambda_{k+1})<D_c(W^k + \widetilde\Lambda_k)$, or 
it is not possible to have $\ell-1$ disjoint translates of $W^{k+1} + \widetilde\Lambda_{k+1}$. And so on, applying Lemma~\ref{LemDeFin} at most $\ell-1$ times, there exists $k'\in \llbracket k+1,k+\ell-1\rrbracket $ such that $W^{k'} + \widetilde\Lambda_{k'}$ has additional auto-intersections. Quantitatively, if we denote $\overline\tau^k = \overline\tau^k(B_1,\cdots,B_k)$, by Lemma \ref{LemDeFin}, either
\[\overline\tau^{k+1} \le \overline\tau^k - \frac{D_k^\ell \varep}{4\ell},\]
or
\[D_{k+1}^{\ell-1} \ge \frac{\varep}{(\ell-1)(\ell-2)}D_k^\ell \ge \frac{\varep}{\ell^2}D_k^\ell.\]

This leads to 
\[\overline\tau^{k+\ell-1} \le \overline\tau^k - \frac{\varep}{4\ell} \left(\frac{\varep}{\ell^2}\right)^{\ell-2} D_k^\ell.\]
But Lemma~\ref{EstimTauxN} implies that
\[D_k^\ell \ge \frac{\ell\overline\tau^k - 1}{\ell^2},\]
so 
\[\overline\tau^{k+\ell-1} \le \overline\tau^k - \frac{1}{4} \left(\frac{\varep}{\ell^2}\right)^{\ell-1} \left(\overline\tau^k - \frac{1}{\ell}\right),\]
thus, setting $\alpha_\ell =  1 - \frac14 (\frac{\varep}{\ell^2})^{\ell-1}$, we get 
\[\overline\tau^{k+\ell-1} - \frac{1}{\ell} \le  \alpha_\ell\left( \overline\tau^k - \frac{1}{\ell} \right).\]
This implies that for every $\ell>0$, the sequence of rates $\overline\tau^k$ is smaller than a sequence converging exponentially fast to $1/\ell$: we get that
\[\overline\tau^{\ell k}(A_1,\cdots,A_{\ell k}) \le \alpha_\ell^k + \frac{1}{\ell}.\]
In particular, the asymptotic rate of injectivity is generically equal to zero.
\bigskip

Finally, we quickly explain how to adapt these arguments to get the final part of the theorem. As $d\big(A_k,\bigcup_q V_q\big)>\delta$ for a positive asymptotic density $d$ of integers $k$, for any $k_0\in\N$, there exists an integer $m\in\N$ such that
\[\frac{1}{k_0+1}\card \left\{i\in\{m,\cdots,m+k_0\} \mid d\big(A_i,\bigcup_q V_q\big)>\delta \right\} \ge \frac{d}{2}.\]
We get the conclusion by following the first part of the proof of the theorem, and using additionally Lemma \ref{LemDeFin2} for the integers $i$ such that $d\big(A_i,\bigcup_q V_q\big)\le\delta$.
\end{proof}

\section[A local-global formula]{A local-global formula for generic expanding maps and generic diffeomorphisms}\label{SecRateExpand}

The goal of this section is to study the rate of injectivity of generic $C^r$-expanding maps and $C^r$-generic diffeomorphisms of the torus $\T^n$. Here, the term expanding map is taken from the point of view of discretizations: we say that a linear map $A$ is expanding if there exist no distinct integer points $x,y\in\Z^n$ such that $\widehat A(x) = \widehat A(y)$. This condition is satisfied in particular if for every $x\neq 0$, we have $\|Ax\|_\infty \le \|x\|_\infty$; thus when $n=1$ this definition coincides with the classical definition of expanding map.

The main result of this section is that the rate of injectivity of both generic $C^1$-diffeomorphisms and generic $C^r$-expanding maps of the torus $\T^n$ is obtained from a local-global formula (Theorems~\ref{convBisMieux} and \ref{TauxExpand}). Let us begin by explaining the case of diffeomorphisms.

\begin{theoreme}\label{convBisMieux}
Let $r\in[1,+\infty]$, and $f\in \Diff^r(\T^n)$ (or $f\in \Diff^r(\T^n,\Leb)$) be a generic diffeomorphism. Then $\tau^k(f)$ is well defined (that is, the limit superior in \eqref{EqTauT} is a limit) and satisfies:
\begin{align*}
\tau^k(f) & = \int_{\T^n} \tau^k\left(Df_x, \cdots, D f_{f^{k-1}(x)}\right) \ud \Leb(x)\\
   & = \int_{\T^n} \overline\tau^k\left(Df_x, \cdots, D f_{f^{k-1}(x)}\right) \ud \Leb(x)
\end{align*}
(with $\overline\tau$ defined by Equation \eqref{DeftauBar} page \pageref{DeftauBar}). Moreover, the function $\tau^k$ is continuous at $f$.
\end{theoreme}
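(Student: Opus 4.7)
The plan is to reduce the computation of $\tau^k(f)$ to the linear results of Section~\ref{SecLin} by a local linearization argument: in each small cube on which $f$ is nearly affine, the discretization of $f$ behaves like the discretization of the composition of its differentials along the corresponding orbit segment, and the density of the image set can then be evaluated using Proposition~\ref{CalculTauxModel}.

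First, I would fix $\varep>0$ and partition $\T^n$ into cubes $(C_i)_{i\in I}$ of side $\eta$, choosing in each $C_i$ a base point $x_i$. For $\eta$ small enough (in terms of the $C^1$-norm of $f$ and $\varep$), on each orbit segment $C_i, f(C_i), \dots, f^{k-1}(C_i)$ the map $f$ is $\varep\eta$-close to the affine map with linear parts $A_i^{(j)} = Df_{f^{j-1}(x_i)}$. When $N$ is large enough (depending on $\eta$ and $\varep$), this affine approximation dominates the mesh $1/N$, and the restriction of $(f_N)^k$ to $E_N \cap C_i$ is conjugated, up to a bounded boundary defect, to the composition $\widehat{A_i^{(k)}} \circ \cdots \circ \widehat{A_i^{(1)}}$ acting on the rescaled grid $N(E_N \cap C_i - x_i)$ seen as a subset of $\Z^n$.

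Using this conjugation, the number of points of $(f_N)^k(E_N)$ sitting in a neighbourhood of $f^k(x_i)$ is well approximated by $\tau^k(A_i^{(1)},\dots,A_i^{(k)})\cdot \card(E_N\cap C_i)$. Summing over $i$, letting $N\to+\infty$ with $\eta$ fixed, and then $\eta\to 0$, we obtain by Riemann summation the formula
\[\tau^k(f)=\int_{\T^n}\tau^k\bigl(Df_x,\dots,Df_{f^{k-1}(x)}\bigr)\ud\Leb(x),\]
and we must still argue that the $\limsup$ in the definition of $\tau^k(f)$ is an honest limit. For this I would show that for a generic $f$ the $k$-tuple $\bigl(Df_x,\dots,Df_{f^{k-1}(x)}\bigr)$ is a generic tuple (in the sense required by Proposition~\ref{CalculTauxModel}) for Lebesgue-almost every $x$: this follows from a Baire/transversality argument of the same flavour as Lemma~\ref{PerturbCr}, exploiting that the conditions required in Proposition~\ref{CalculTauxModel}, such as rational independence of coefficients and ergodicity of the diagonal action on $\R^{nk}/\Z^{nk}$, are satisfied away from a countable union of positive-codimension conditions on the matrices which can be avoided by an arbitrarily small $C^r$-perturbation of $f$. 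Under this condition, Proposition~\ref{CalculTauxModel} simultaneously gives $\tau^k=\overline\tau^k$ pointwise on the orbit and promotes the local densities to genuine limits, from which the second equality of the theorem also follows.

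The main obstacle is to make the local-to-global reduction uniform: one has to bound the boundary error coming from points of $E_N$ lying near $\partial C_i$ --- whose relative contribution is $O(\eta^{-1}/N)$ --- and to ensure that, after $k$ iterations of $f$, the drift of $f$ from its affine approximation on each $C_i$ remains small compared to the mesh $1/N$, so that $\widehat{A_i^{(k)}}\circ\cdots\circ\widehat{A_i^{(1)}}$ really predicts the action of $(f_N)^k$ on $E_N\cap C_i$. This forces $N$ to grow sufficiently fast with $\eta$, but the rates can be disentangled by the usual two-parameter scheme. Once this quantitative control is in place, continuity of $\tau^k$ in $f$ follows by dominated convergence: the integrand $x\mapsto\overline\tau^k(Df_x,\dots,Df_{f^{k-1}(x)})$ is uniformly bounded by $1$ and, by Corollary~\ref{conttaukk}, depends continuously on the matrix coefficients, hence continuously on $f$ in the $C^r$ topology.
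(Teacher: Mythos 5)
Your proposal follows essentially the same route as the paper: local linearization via Taylor's theorem, Thom transversality to place the differential tuples away from a bad locally-finite union of submanifolds (Lemma~\ref{PerturbCr}), the linear density formula (Proposition~\ref{CalculTauxModel} / Corollary~\ref{conttaukk}), and a local-to-global summation with a two-parameter limit. The paper proves the expanding-map version (Theorem~\ref{TauxExpand}) in detail and then obtains Theorem~\ref{convBisMieux} by the same scheme via Lemma~\ref{LemTauxDiff2}.

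One step in your sketch as written would fail and is worth flagging. You claim that, up to boundary defects, $(f_N)^k|_{E_N\cap C_i}$ is ``conjugated'' to $\widehat{A_i^{(k)}}\circ\cdots\circ\widehat{A_i^{(1)}}$ acting on a rescaled copy of $\Z^n$. This is not correct: after rescaling, the successive image grids $N\bigl(f_N^j(E_N\cap C_i)-\text{(base point)}\bigr)$ are \emph{translates} of integer lattices whose offsets depend on the fractional part of $Nf^j(x_i)$ and on the rounding at each step. The correct model is therefore the composition of discretized \emph{affine} maps $\pi(A_i^{(k)}+v_k)\circ\cdots\circ\pi(A_i^{(1)}+v_1)$ with translation vectors $(v_j)$ that vary uncontrollably with $N$ and $i$. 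The density of such a set depends a priori on $(v_j)$, and Proposition~\ref{CalculTauxModel} alone does not tell you it matches $\overline\tau^k(A_i^{(1)},\dots,A_i^{(k)})$. Establishing that the density is nearly invariant under all such translations, uniformly for matrices $\eta'$-far from the bad submanifolds and with a finite uniform radius $R_0$, is exactly the content of the first estimate~\eqref{EqLem22} in Lemma~\ref{LemTauxExpand} (resp.\ Lemma~\ref{LemTauxDiff2}), which in turn rests on the quantitative Weyl criterion (Lemma~\ref{Weyl}). The second estimate~\eqref{EqLem222} is what controls the drift/rounding issue you do identify, by bounding the proportion of image points near $(\Z^n)'$. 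So you should replace your direct appeal to Proposition~\ref{CalculTauxModel} by the quantitative Lemma~\ref{LemTauxDiff2}: it is precisely the uniformity in $(v_j)$ and in $R\ge R_0$ from that lemma that makes your ``usual two-parameter scheme'' close.
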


The idea of the proof of this theorem is very simple: locally, the diffeomorphism is almost equal to a linear map. This introduces an intermediate \emph{mesoscopic} scale on the torus:
\begin{itemize}
\item at the macroscopic scale, the discretization of $f$ acts as $f$;
\item at the intermediate mesoscopic scale, the discretization of $f$ acts as a linear map;
\item at the microscopic scale, we are able to see that the discretization is a finite map and see the discreteness of the phase space.
\end{itemize}
This remark is formalized by Taylor's formula: for every $\varep>0$ and every $x\in\T^n$, there exists $\rho>0$ such that $f$ and its Taylor expansion at order 1 are $\varep$-close on $B(x,\rho)$. We then suppose that the derivative $Df_x$ is ``good'': the rate of injectivity of any of its $C^1$-small perturbations can be seen on a ball $B_R$ of $\R^n$ (with $R$ uniform in $x$). Then, the proof of the local-global formula is made in two steps.
\begin{itemize}
\item Prove that ``a lot'' of maps of $SL_n(\R)$ are ``good''. This is formalized by Lemma~\ref{LemTauxExpand}, which gives estimations of the size of the perturbations of the linear map allowed, and of the size of the ball $B_R$. Its proof is quite technical and uses crucially the formalism of model sets, and an improvement of Weyl's criterion.
\item Prove that for a generic diffeomorphism, the derivative satisfies the conditions of Lemma~\ref{LemTauxExpand} at almost every point. This follows easily from Thom's transversality theorem.
\end{itemize}

As the case of expanding maps is more involved but similar, we will prove the local-global formula only for expanding maps; the adaptation of it for diffeomorphisms is straightforward.
\bigskip

Remark that the hypothesis of genericity is necessary to get Theorem~\ref{convBisMieux}. For example, it can be seen that if we set 
\[f_0 = \left(\begin{matrix} \frac12 & -1 \\ \frac12 & 1 \end{matrix}\right),\]
then $\tau(f_0)=1/2$ whereas $\tau(f_0+(1/4,3/4)) = 3/4$. Thus, if $g$ is a diffeomorphism of the torus which is equal to $f_0+v$ on an open subset of $\T^2$, with $v$ a suitable translation vector, then the conclusions of Theorem~\ref{convBisMieux} do not hold (see Example 11.4 of \cite{Guih-These} for more explanations).
\bigskip

The definition of the linear analogue of the rate of injectivity of an expanding map in time $k$ is more complicated than for diffeomorphisms: in this case, the set of preimages has a structure of $d$-ary tree. We define the rate of injectivity of a tree --- with edges decorated by linear expanding maps --- as the probability of percolation of a random graph associated to this decorated tree (see Definition~\ref{Noel!}). In particular, if all the expanding maps are equal, then the connected component of the root of this random graph is a Galton-Watson tree. We begin by the definition of the set of expanding maps.

\begin{definition}\label{DefExpan}
For $r\ge 1$ and $d\ge 2$, we denote by $\mathcal D^r(\T^n)$\index{$\mathcal D^{r}(\T^n)$} the set of $C^r$ ``$\Z^n$-expanding maps'' of $\T^n$ for the infinite norm. More precisely, $\mathcal D^r(\T^n)$ is the set of maps $f : \T^n\to \T^n$, which are local diffeomorphisms, such that the derivative $f^{(\lfloor r\rfloor)}$ is well defined and belongs to $C^{r-\lfloor r\rfloor}(\T^n)$ and such that for every $x\in \T^n$ and every $v\in\Z^n\setminus\{0\}$, we have $\|Df_x v\|_\infty \ge 1$.

In particular, for $f\in \mathcal D^r(\T^n)$, the number of preimages of any point of $\T^n$ is equal to a constant, that we denote by $d$.
\end{definition}

Remark that in dimension $n=1$, the set $\mathcal D^r(\Sp^1)$ coincides with the classical set of expanding maps: $f\in\mathcal D^r(\Sp^1)$ if and only if it belongs to $C^r(\Sp^1)$ and $f'(x)\ge 1$ for every $x\in\Sp^1$.
\bigskip

We now define the linear setting corresponding to a map $f\in\mathcal D(\T^n)$. 

\begin{definition}
We set (see also Figure~\ref{TreeBee})\index{$I_k$}
\[I_k = \bigsqcup_{m=1}^k \llbracket 1,d\rrbracket^m\]
the set of $m$-tuples of integers of $\llbracket 1,d\rrbracket$, for $1\le m\le k$.

For $\ind = (i_1,\cdots,i_m)\in \llbracket 1,d\rrbracket^m$, we set $\len(\ind) = m$\index{$\len(\ind)$} its length and $\fat(\ind) = (i_1,\cdots,i_{m-1})\in \llbracket 1,d\rrbracket^{m-1}$\index{$\fat(\ind)$} its father (with the convention $\fat(i_1) = \emptyset$). The tuple $\fat^m(\ind)$ is defined inductively by the formula $\fat^{m+1}(\ind) = \fat(\fat^m(\ind))$.
\end{definition}

\begin{figure}[!b]
\begin{minipage}[c]{.35\linewidth}
\begin{center}
\begin{tikzpicture}[scale=.7]
\node (O) at (1,0){$\emptyset$};
\node (A) at (3,1){$(1)$};
\node (B) at (3,-1){$(2)$};
\node (C) at (6,1.5){$(1,1)$};
\node (D) at (6,.5){$(1,2)$};
\node (E) at (6,-.5){$(2,1)$};
\node (F) at (6,-1.5){$(2,2)$};
\draw (O) -- (A);
\draw (A) -- (C);
\draw (A) -- (D);
\draw (O) -- (B);
\draw (B) -- (E);
\draw (B) -- (F);
\end{tikzpicture}
\caption[The tree $T_2$ for $d=2$]{The tree $T_2$ for $d=2$.}\label{TreeBee}
\end{center}
\end{minipage}\hfill
\begin{minipage}[c]{.6\linewidth}
\begin{center}
\begin{tikzpicture}[scale=1.2]
\node (O) at (0,0){$y$};
\node (A) at (3,1){$x_{(1)}$};
\node (B) at (3,-1){$x_{(2)}$};
\node (C) at (6,1.5){$x_{(1,1)}$};
\node (D) at (6,.5){$x_{(1,2)}$};
\node (E) at (6,-.5){$x_{(2,1)}$};
\node (F) at (6,-1.5){$x_{(2,2)}$};
\draw (O) -- (A) node[sloped, midway, above]{$\det Df_{x_{(1)}}^{-1}$};
\draw (A) -- (C) node[sloped, midway, above]{$\det Df_{x_{(1,1)}}^{-1}$};
\draw (A) -- (D) node[sloped, midway, below]{$\det Df_{x_{(1,2)}}^{-1}$};
\draw (O) -- (B) node[sloped, midway, below]{$\det Df_{x_{(2)}}^{-1}$};
\draw (B) -- (E) node[sloped, midway, above]{$\det Df_{x_{(2,1)}}^{-1}$};
\draw (B) -- (F) node[sloped, midway, below]{$\det Df_{x_{(2,2)}}^{-1}$};
\end{tikzpicture}
\caption[Probability tree associated to the preimages of $y$]{The probability tree associated to the preimages of $y$, for $k=2$ and $d=2$. We have $f(x_{(1,1)}) = f(x_{(1,2)}) = x_{(1)}$, etc.}\label{ProbTree}
\end{center}
\end{minipage}
\end{figure}

The set $I_k$ is the linear counterpart of the set $\bigsqcup_{m=1}^k f^{-m}(y)$. Its cardinal is equal to $d(1-d^k)/(1-d)$.

\begin{definition}\label{Noel!}
Let $k\in\N$. The \emph{complete tree of order $k$} is the rooted $d$-ary tree $T_k$\index{$T_k$} whose vertices are the elements of $I_k$ together with the root, denoted by $\emptyset$, and whose edges are of the form $(\fat(\ind),\ind)_{\ind\in I_k}$ (see Figure~\ref{TreeBee}).

Let $(p_\ind)_{\ind\in I_k}$ be a family of numbers belonging to $[0,1]$. These probabilities will be seen as decorations of the edges of the tree $T_k$. We will call \emph{random graph associated to $(p_\ind)_{\ind\in I_k}$} the random subgraph $G_{(p_\ind)_\ind}$\index{$G_{(p_\ind)}$} of $T_k$, such that the laws of appearance the edges $(\fat(\ind),\ind)$ of $G_{(p_\ind)_\ind}$ are independent Bernoulli laws of parameter $p_\ind$. In other words, $G_{(p_\ind)_\ind}$ is obtained from $T_k$ by erasing independently each vertex of $T_k$ with probability $1-p_\ind$.

We define the \emph{mean density} $\overline D((p_\ind)_\ind)$\index{$\overline D((p_\ind)_\ind)$} of $(p_\ind)_{\ind\in I_k}$ as the probability that in $G_{(p_\ind)_\ind}$, there is at least one path linking the root to a leaf.
\end{definition}

Remark that if the probabilities $p_\ind$ are constant equal to $p$, the random graph $G_{(p_\ind)_\ind}$ is a Galton-Watson tree, where the probability for a vertex to have $i$ children is equal to $\binom{d}{i} p^i(1-p)^{d-i}$.

\begin{definition}\label{DefTreeMap}
By the notation $\overline D( (\det Df_x^{-1})_{x\in f^{-m}(y),\, 1\le m\le k})$, we will mean that the mean density is  taken with respect to the random graph $G_{f,y}$ associated to the decorated tree whose vertices are the $f^{-m}(y)$ for $0\le m\le k$, and  whose edges are of the form $(f(x),x)$ for $x\in f^{-m}(y)$ with $1\le m \le k$, each one being decorated by the number $\det Df_x^{-1}$ (see Figure~\ref{ProbTree}).
\end{definition}

Recall that the rates of injectivity are defined by (see also Definition~\ref{DefTauxDiffeo})
\[\tau^k(f_N) = \frac{\card\big((f_N)^k(E_N)\big)}{\card(E_N)} \qquad \text{and} \qquad \tau^k(f) = \limsup_{N\to+\infty} \tau^k(f_N).\]

\begin{theoreme}\label{TauxExpand}
Let $r\ge 1$, $f$ a generic element of $\mathcal D^r(\T^n)$ and $k\in\N$. Then, $\tau^k(f)$ is a limit (that is, the sequence $(\tau^k(f_N))_N$ converges), and we have
\begin{equation}\label{EqIntMieux}
\tau^k(f) = \int_{\T^n} \overline D\big( (\det Df_x^{-1})_{\begin{subarray}{l} 1\le m\le k \\ x\in f^{-m}(y)\end{subarray}}\big) \ud \Leb(y).
\end{equation}
Moreover, the map $f\mapsto \tau^k(f)$ is continuous at $f$.
\end{theoreme}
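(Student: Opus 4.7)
My plan is to follow the three-scale strategy used for diffeomorphisms (Theorem~\ref{convBisMieux}) and reduce the analysis to a local linear statement near each preimage of a target point $y\in\T^n$. Fix $N$ large and introduce a mesoscopic scale $\rho=\rho(N)$ with $\rho\to 0$ and $\rho N\to\infty$; the tree $T_k$ of preimages $x_{\ind}\in f^{-\len(\ind)}(y)$ of $y$ under $f$ has finitely many vertices, which for $f$ generic are pairwise distinct and, for $N$ large enough, lie in disjoint mesoscopic cubes. Using Thom's transversality theorem I would first arrange that, for $\Leb$-a.e.\ $y$, every matrix $Df_{x_\ind}$ is \emph{good} in the sense of the yet-to-be-proved Lemma~\ref{LemTauxExpand}: its linear rate of injectivity is observable, up to an error $\varep$, already on a ball of $\Z^n$ of radius $R_0=R_0(Df_{x_\ind},\varep)$. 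With $\rho N\gg R_0$, Taylor's formula makes the restriction of $f_N$ to $B(x_\ind,\rho)\cap E_N$ agree, up to negligible error, with the discretization of the affine map $z\mapsto f(x_\ind)+Df_{x_\ind}(z-x_\ind)$.

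From this local comparison, the density inside $E_N\cap B(x_\ind,\rho)$ of grid points $z$ whose image $f_N(z)$ lies close to $x_{\fat(\ind)}$ tends to $|\det Df_{x_\ind}|^{-1}$. Assembling these local pieces, a target point $y\in E_N$ belongs to $(f_N)^k(E_N)$ if and only if there exists a path in $T_k$ from the root to a leaf each of whose edges $(\fat(\ind),\ind)$ is realised by an actual grid point in the local patch near $x_\ind$. The crucial step is then to show that, as $N\to\infty$ and averaging over $y$, these realisation events behave asymptotically as independent Bernoulli variables of parameters $p_\ind=|\det Df_{x_\ind}|^{-1}$. Plugging this into Definition~\ref{Noel!} yields precisely $\overline D\big((\det Df_x^{-1})_{x\in f^{-m}(y),\,1\le m\le k}\big)$ for the probability that some path survives, and integrating in $y$ produces~\eqref{EqIntMieux}; the fact that the underlying local densities are genuine limits promotes $\tau^k(f)$ from a lim sup to a limit.

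The hard part is this asymptotic independence across the edges of $T_k$, and I would attack it by an equidistribution argument modelled on Proposition~\ref{CalculTauxModel}. The relevant local affine pieces simultaneously see all the entries of the various $Df_{x_\ind}$; for $f$ generic these entries are $\Q$-linearly independent, so a Weyl-type equidistribution on the product of the associated quotient tori delivers the required independence. Uniform quantitative control comes from Lemma~\ref{LemTauxExpand}, and the fact that the whole construction is generic in $f\in\mathcal D^r(\T^n)$ comes from applying Thom transversality to the $k$-jet along the preimage tree, which places the ``bad'' set of rational dependencies in a countable union of positive-codimension submanifolds.

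Continuity of $\tau^k$ in $f$ then follows directly from the integral formula: the preimage tree and its decorations depend continuously on $f$ in the $C^1$ topology; $\overline D$ is polynomial (hence continuous) in its arguments; and dominated convergence finishes the job. The adaptation from the diffeomorphism case thus amounts essentially to replacing the forward orbit $(Df_{f^{j}(x)})_{0\le j\le k-1}$ by the backward tree $(Df_{x_\ind})_{\ind\in I_k}$, and the linear rate $\overline\tau^k$ by the mean density $\overline D$ of the associated random graph.
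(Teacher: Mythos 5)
Your proposal follows essentially the same route as the paper's proof: introduce a mesoscopic scale, Taylor-linearize around each preimage $x_\ind\in f^{-m}(y)$, use Thom transversality (Lemma~\ref{PerturbCr}) to place the tree of derivatives in the good region supplied by the linear Lemma~\ref{LemTauxExpand}, obtain the product/independence structure from a Weyl-type equidistribution argument, and then integrate the percolation probability $\overline D$ over $y$, the open-dense sets $\mathcal U_\ell$ yielding both the convergence of $\tau^k(f_N)$ and the continuity of $\tau^k$. One small slip in the wording: the quantity $|\det Df_{x_\ind}|^{-1}$ is the local density of the \emph{image} set $f_N\big(E_N\cap B(x_\ind,\rho)\big)$ inside the mesoscopic ball around $x_{\fat(\ind)}$, not the proportion of $z\in E_N\cap B(x_\ind,\rho)$ mapping near $x_{\fat(\ind)}$ (which tends to $1$); once corrected, the architecture is identical to the paper's.
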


The proof of Theorem~\ref{TauxExpand} is mainly based on the following lemma, which treats the linear corresponding case. Its statement is divided into two parts, the second one being a quantitative version of the first.

\begin{lemme}\label{LemTauxExpand}
Let $k\in\N$, and a family $(A_\ind)_{\ind\in I_k}$ of invertible matrices, such that for any $\ind\in I_k$ and any $v\in\Z^n\setminus\{0\}$, we have $\|A_\ind v\|_\infty \ge 1$.

If the image of the map
\[\Z^n\ni x \mapsto \bigoplus_{\ind\in I_k} A_\ind^{-1} A_{\fat(\ind)}^{-1} \cdots A_{\fat^{\len(\ind)}(\ind)}^{-1} x\]
projects on a dense subset of the torus $\R^{n\card I_k}/\Z^{n\card I_k}$, then we have
\[D_d\left(\bigcup_{\ind\in \llbracket 1,d\rrbracket^k}\big( \widehat A_{\fat^{k-1}(\ind)} \circ \cdots \circ \widehat A_\ind \big) (\Z^n)\right) = \overline D\big((\det A_\ind^{-1})_\ind\big).\]

More precisely, for every $\ell', c\in\N$, there exists a locally finite union of positive codimension submanifolds $V_q$ of $(GL_n(\R))^{\card I_k}$ satisfying: for every $\eta'>0$, there exists a radius $R_0>0$ such that if $(A_\ind)_{\ind\in I_k}$ satisfies $d((A_\ind)_\ind, V_q)>\eta'$ for every $q$, then for every $R\ge R_0$, and every family $(v_\ind)_{\ind\in I_k}$ of vectors of $\R^n$, we have\footnote{The map $\pi(A+v)$\index{$\pi(A+v)$} is the discretization of the affine map $A+v$. The quantity $D_R$ is defined in Definition \ref{DefTaux}.}
\begin{equation}\label{EqLem22}
\left|D_R\left(\bigcup_{\ind\in \llbracket 1,d\rrbracket^k}\big( \pi(A_{\fat^{k-1}(\ind)} + v_{\fat^{k-1}(\ind)}) \circ \cdots \circ \pi(A_\ind + v_\ind) \big) (\Z^n) \right) - \overline D\big((\det A_\ind^{-1})_\ind\big)\right| < \frac{1}{\ell'}
\end{equation}
(the density of the image set is ``almost invariant'' under perturbations by translations), and for every $m\le k$ and every $\ind\in \llbracket 1,d\rrbracket^k$, we have\footnote{Where $(\Z^n)'$ stands for the set of points of $\R^n$ at least one coordinate of which belongs to $\Z+1/2$.}
\begin{equation}\label{EqLem222}
D_R\left\{x\in \big(A_{\fat^m(\ind)} + v_{\fat^m(\ind)}\big)(\Z^n)\ \middle\vert\ d\big(x,(\Z^n)' \big) < \frac{1}{{c}\ell'(2n+1)} \right\} < \frac{1}{{c}\ell'}
\end{equation}
(there is only a small proportion of the points of the image sets which are obtained by discretizing points close to $(\Z^n)'$).
\end{lemme}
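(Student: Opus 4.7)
My approach would run in parallel to the proof of Proposition~\ref{CalculTauxModel}, but adapted to the tree structure of preimages under an expanding map. The expansion hypothesis $\|A_\ind v\|_\infty \ge 1$ for all $v\in\Z^n\setminus\{0\}$ is exactly the condition that guarantees each $\widehat A_\ind$ is injective on $\Z^n$, so along any fixed branch $\ind\in\llbracket 1,d\rrbracket^k$, the composition $\widehat A_{\fat^{k-1}(\ind)}\circ\cdots\circ\widehat A_\ind$ is injective, and for a generic choice of matrices, the image set has density $\prod_{j=0}^{k-1}|\det A_{\fat^j(\ind)}^{-1}|$ in $\Z^n$ by the same model set computation as in Proposition~\ref{CalculTauxModel}.

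The first step is to write, for each branch $\ind$, the image set as the projection $p_{\text{last}}(\Lambda_\ind\cap p_{\text{first}}^{-1}(W))$ of an appropriate model set in $\R^{n(k+1)}$, analogously to Equation~\eqref{CalcGamma}. Next, I would assemble these into a single model-set description in the ambient space $\R^{n\card I_k+n}$: a point $y\in\Z^n$ belongs to the union of the images if and only if at least one of the branch-level conditions holds, i.e.\ the class in $\R^{n\card I_k}/\Z^{n\card I_k}$ of the vector
\[\bigoplus_{\ind\in I_k} A_\ind^{-1}A_{\fat(\ind)}^{-1}\cdots A_{\fat^{\len(\ind)}(\ind)}^{-1}\, y\]
lies in a specific union $U$ of translates of boxes indexed by the leaves. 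The density hypothesis gives equidistribution of these classes as $y$ ranges over $\Z^n$, so $D(\bigcup_\ind\cdots)=\Leb(U)$.

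The second step is to identify $\Leb(U)$ with the percolation probability $\overline D((\det A_\ind^{-1})_\ind)$. Because $U$ is a union of axis-aligned boxes whose sides correspond to different $\ind$ and whose lengths are the relevant Jacobians, the characteristic function of $U$ decomposes along the tree $T_k$: the Lebesgue measure of the set of preimage chains that reach level $k$ factors as independent Bernoulli events along each edge of weight $|\det A_\ind^{-1}|$. Taking the probability that at least one root-to-leaf path survives recovers exactly $\overline D$, by Definition~\ref{Noel!}.

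For the quantitative second part, I would replace qualitative density by effective equidistribution. The submanifolds $V_q$ are defined as the loci in $(GL_n(\R))^{\card I_k}$ on which the combined linear map above fails Weyl's criterion with sufficient uniformity — concretely, those places where some nontrivial integer linear combination of the relevant matrix entries vanishes, so that the image of $\Z^n$ lies in a positive-codimension subtorus. Away from the $\eta'$-neighbourhood of the union of $V_q$, a standard Erd\H os--Tur\'an / Koksma-type bound gives an effective discrepancy estimate, yielding a radius $R_0=R_0(\eta',\ell',c,k)$ such that the empirical measure of the classes modulo $\Z^{n\card I_k}$ on $B_R\cap\Z^n$ is $1/(c\ell'(2n+1))$-close to Lebesgue for all $R\ge R_0$. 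Since translation vectors $v_\ind$ only shift the reference windows without altering equidistribution, \eqref{EqLem22} follows by integrating the indicator of $U$ against this approximation.

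The bound \eqref{EqLem222} follows from the same effective equidistribution applied to a slightly enlarged window: the preimages in $(A_{\fat^m(\ind)}+v_{\fat^m(\ind)})(\Z^n)$ that lie within distance $1/(c\ell'(2n+1))$ of $(\Z^n)'$ correspond to classes in a set of Lebesgue measure at most $1/(c\ell')$ (by the union-bound over the $2n$ coordinate half-integer hyperplanes), and effective equidistribution transfers this measure bound to the discrete density on any ball of radius $\ge R_0$. The main obstacle I anticipate is the bookkeeping of the effective rate of equidistribution: one must choose $V_q$ and the Diophantine exponent embedded in $R_0$ so that a single radius works uniformly for all the auxiliary windows needed in \eqref{EqLem22} and \eqref{EqLem222} simultaneously, which requires extracting from the nondegeneracy condition $d((A_\ind)_\ind,V_q)>\eta'$ an explicit lower bound on the smallest nontrivial Fourier coefficient of the map, analogous to the quantitative Weyl estimates used for linear flows on tori.
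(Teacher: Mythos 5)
Your proposal follows essentially the same route as the paper's proof: express each branch's image set as a model set, use the density hypothesis to get equidistribution on the torus $\R^{n\card I_k}/\Z^{n\card I_k}$, deduce independence of the conditions attached to the different first-level subtrees (they involve disjoint coordinate blocks of the large torus), match the resulting inclusion--exclusion recursion to the percolation probability $\overline D$, and obtain the quantitative part from an effective Weyl-type statement (the paper's Lemma~\ref{Weyl}) together with the $d=1$ computation (Lemma~\ref{ppe}). One small inaccuracy worth flagging: the region $U$ is not literally a ``union of axis-aligned boxes'' --- the branch conditions pull back under $\widetilde M_{\cdots}^{-1}$ to sheared parallelepipeds determined by the lattices $\widetilde\Lambda_\ind$ --- but this does not affect the argument, since the factorization you want comes from the independence across disjoint coordinate blocks (as the paper makes explicit via Equation~\eqref{independant}), not from any box structure of $U$.
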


Remark that this statement is purely linear: again, the difficulties of local-global formula's proof hold in the linear world.

The local-global formula~\eqref{EqIntMieux} will later follow from this lemma, an appropriate application of Taylor's theorem and Thom's transversality theorem (Lemma~\ref{PerturbCr}).

The next lemma uses the strategy of proof of Weyl's criterion to get a uniform convergence in Birkhoff's theorem for rotations of the torus $\T^n$ whose rotation vectors are outside of a neighbourhood of a finite union of hyperplanes.

\begin{lemme}[Weyl]\label{Weyl}
Let $\dist$ be a distance generating the weak-* topology on $\Prb$ the space of Borel probability measures on $\T^n$. Then, for every $\varep>0$, there exists a locally finite family of affine hyperplanes $H_i \subset \R^n$, such that for every $\eta>0$, there exists $M_0\in\N$, such that for every $\boldsymbol\lambda \in \R^n$ satisfying $d(\boldsymbol\lambda,H_q)>\eta$ for every $q$, and for every $M\ge M_0$, we have
\[\dist \left(\frac{1}{M}\sum_{m=0}^{M-1} \bar\delta_{m\boldsymbol\lambda }\,,\ \Leb_{\R^n/\Z^n}\right) < \varep,\]
where $\bar\delta_x$ is the Dirac measure of the projection of $x$ on $\R^n/\Z^n$.
\end{lemme}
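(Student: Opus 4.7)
The plan is to combine the classical Fourier-analytic proof of Weyl's equidistribution criterion with a reduction, dictated by the tolerance $\varep$, to testing against only finitely many characters. Since $\dist$ metrizes the weak-$*$ topology on $\Prb(\T^n)$ and since the characters $\chi_k(x)=e^{2i\pi\langle k,x\rangle}$, $k\in\Z^n$, span a dense subspace of $C(\T^n)$ by Stone--Weierstrass, I would begin by extracting from $\varep$ a finite set $K\subset\Z^n\setminus\{0\}$ and some $\varep'>0$ with the following property: for any Borel probability measure $\mu$ on $\T^n$, if $\bigl|\int_{\T^n}\chi_k\ud\mu\bigr|<\varep'$ for every $k\in K$, then $\dist(\mu,\Leb_{\T^n})<\varep$. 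This reduces the lemma to a uniform estimate of finitely many Birkhoff sums of characters.

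For the locally finite family of hyperplanes, I would take
\[H_{k,j}=\{\boldsymbol\lambda\in\R^n\mid \langle k,\boldsymbol\lambda\rangle=j\},\qquad k\in K,\ j\in\Z.\]
Since $K$ is finite and, for fixed $k$, the $H_{k,j}$ are parallel and spaced by $1/\|k\|$, only finitely many meet any given compact set, so the family is locally finite. If $\boldsymbol\lambda$ stays at distance at least $\eta$ from every $H_{k,j}$, then for each $k\in K$ the real $\langle k,\boldsymbol\lambda\rangle$ lies at distance at least $\eta\|k\|$ from $\Z$; in particular there is a positive constant $c(\eta)$, depending only on $\eta$ and on the finite set $K$, such that $|e^{2i\pi\langle k,\boldsymbol\lambda\rangle}-1|\ge c(\eta)$ for every $k\in K$.

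The final step is the elementary telescoping identity at the heart of Weyl's criterion: for $k\in K$,
\[\int_{\T^n}\chi_k\ud\Bigl(\tfrac{1}{M}\sum_{m=0}^{M-1}\bar\delta_{m\boldsymbol\lambda}\Bigr)=\frac{1}{M}\sum_{m=0}^{M-1}e^{2i\pi m\langle k,\boldsymbol\lambda\rangle}=\frac{1}{M}\cdot\frac{e^{2i\pi M\langle k,\boldsymbol\lambda\rangle}-1}{e^{2i\pi\langle k,\boldsymbol\lambda\rangle}-1},\]
the modulus of which is bounded above by $2/(M\,c(\eta))$. Choosing $M_0$ large enough that $2/(M_0\,c(\eta))<\varep'$ then delivers the desired estimate for every $M\ge M_0$ and every admissible $\boldsymbol\lambda$.

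The only point requiring attention is the order of the quantifiers: the family $\{H_{k,j}\}$ must be fixed once and for all from $\varep$ alone, \emph{before} $\eta$ is introduced, whereas $M_0$ is allowed to depend on both $\varep$ and $\eta$. The uniform lower bound $c(\eta)>0$ on $|e^{2i\pi\langle k,\boldsymbol\lambda\rangle}-1|$ across $K$ is available precisely because $K$ is finite, which is why the initial reduction to \emph{finitely many} frequencies (and not merely to a countable set) is the key move of the argument; there are no other substantive obstacles.
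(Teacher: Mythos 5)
Your argument is correct and follows essentially the same route as the paper's proof: reduce, via metrizability of the weak-$*$ topology and density of characters, to controlling finitely many Fourier coefficients, define hyperplanes of integer-resonance, and apply the geometric-series (Dirichlet kernel) bound uniformly over $\boldsymbol\lambda$ away from those hyperplanes. The only cosmetic difference is that the paper realizes the finite-frequency reduction concretely by switching to a specific Fourier-weighted metric $\dist_W$ and truncating it, whereas you invoke Stone--Weierstrass abstractly; both deliver the same finite set $K$ and threshold $\varep'$.
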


\begin{proof}[Proof of Lemma~\ref{Weyl}]
As $\dist$ generates the weak-* topology on $\Prb$, it can be replaced by any other distance also generating the weak-* topology on $\Prb$. So we consider the distance $\dist_W$\index{$\dist_W$} defined by:
\[\dist_W(\mu,\nu) = \sum_{\boldsymbol k\in\N^n} \frac{1}{2^{k_1+\cdots+k_n}}\left| \int_{\R^n/\Z^n} e^{i2\pi\boldsymbol k\cdot x} \ud(\mu-\nu)(x)\right|;\]
there exists $K>0$ and $\varep'>0$ such that if a measure $\mu\in\Prb$ satisfies
\begin{equation}\label{EqWeyl}
\forall \boldsymbol k\in\N^n :\, 0<k_1+\cdots+k_n \le K, \quad \left| \int_{\R^n/\Z^n} e^{i2\pi\boldsymbol k\cdot x} \ud\mu(x)\right|<\varep',
\end{equation}
then $\dist(\mu,\Leb)<\varep$.

For every $\boldsymbol k\in\N^n\setminus\{0\}$ and $j\in\Z$, we set
\[H_{\boldsymbol k}^j = \{\boldsymbol\lambda\in\R^n \mid \boldsymbol k\cdot \boldsymbol\lambda = j\}.\]
Remark that the family $\{H_{\boldsymbol k}^j\}$, with $j\in\Z$ and $\boldsymbol k$ such that $0<k_1+\cdots+k_n \le K$, is locally finite. We denote by $\{H_q\}_q$ this family, and choose $\boldsymbol\lambda \in \R^n$ such that $d(\boldsymbol \lambda,H_q) > \eta$ for every $q$. We also take
\begin{equation}\label{DefMoe}
M_0 \ge \frac{2}{\varep' \left|1-e^{i2\pi\eta}\right|}.
\end{equation}
Thus, for every $\boldsymbol k\in\N^n$ such that $k_1+\cdots+k_n \le K$, and every $M\ge M_0$, the measure
\[\mu = \frac{1}{M}\sum_{m=0}^{M-1} \bar\delta_{m\boldsymbol\lambda}.\]
satisfies
\[\left| \int_{\R^n/\Z^n} e^{i2\pi\boldsymbol k\cdot x} \ud\mu(x)\right| = \frac{1}{M} \left|\frac{1-e^{i2\pi M\boldsymbol k\cdot \boldsymbol\lambda}}{1-e^{i2\pi \boldsymbol k\cdot \boldsymbol\lambda}}\right| \le \frac{2}{M_0} \frac{1}{\left|1-e^{i2\pi \boldsymbol k\cdot \boldsymbol\lambda}\right|}.\]
By \eqref{DefMoe} and the fact that $d(\boldsymbol k\cdot \boldsymbol\lambda,\Z)\ge \eta$, we deduce that
\[\left| \int_{\R^n/\Z^n} e^{i2\pi\boldsymbol k\cdot x} \ud\mu(x)\right| \le\varep'.\]
Thus, the measure $\mu$ satisfies the criterion \eqref{EqWeyl}, which proves the lemma.
\end{proof}

\begin{proof}[Proof of Lemma~\ref{LemTauxExpand}]
To begin with, let us treat the case $d=1$. Let $A_1,\cdots,A_k$ be $k$ invertible matrices. We want to compute the rate of injectivity of $\widehat A_k \circ \cdots \circ \widehat A_1$. Resuming the proof of Proposition~\ref{CalculTauxModel}, we see that $x\in \big( \widehat A_k \circ \cdots \circ \widehat A_1 \big) (\Z^n)$ if and only if $(0^{n(k-1)},x) \in W^k + \widetilde \Lambda_k$ (see page \pageref{DefMat1} for the definitions of these notations). This implies the following statement.

\begin{lemme}\label{ppe}
We have
\begin{equation}\label{densitete}
\det(A_k\cdots A_1)D_d\big( \widehat A_k \circ \cdots \circ \widehat A_1 \big) (\Z^n) = \nu(\operatorname{pr}_{\R^{nk}/\widetilde \Lambda_k}(W^k)),
\end{equation}
where $\nu$ is the uniform measure on the submodule $\operatorname{pr}_{\R^{nk}/\widetilde \Lambda_k}(0^{n(k-1)},\Z^n)$ of $\R^{nk}/\widetilde \Lambda_k$.
\end{lemme}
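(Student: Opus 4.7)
The plan is to reuse the model-set style of argument from the proof of Proposition~\ref{CalculTauxModel}, stripped of its genericity assumption. The starting point will be exactly the characterization established there (adapted to $\widetilde\Lambda_k$ in place of $\Lambda_k$): an integer $x\in\Z^n$ lies in $\Gamma_k:=(\widehat A_k\circ\cdots\circ\widehat A_1)(\Z^n)$ if and only if $(0^{n(k-1)},x)$ belongs to $W^k+\widetilde\Lambda_k$, equivalently if and only if $\operatorname{pr}(0^{n(k-1)},x)\in\operatorname{pr}(W^k)$, where I abbreviate $\operatorname{pr}:=\operatorname{pr}_{\R^{nk}/\widetilde\Lambda_k}$. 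This reduces the computation of $D(\Gamma_k)$ to evaluating the asymptotic frequency with which the orbit of the group homomorphism $\phi:\Z^n\to\R^{nk}/\widetilde\Lambda_k$, $x\mapsto\operatorname{pr}(0^{n(k-1)},x)$, visits $\operatorname{pr}(W^k)$.

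Next I would apply equidistribution of $\phi(\Z^n)$ on the closure of its image $K:=\overline{\phi(\Z^n)}$. Since $\phi$ is a morphism into a compact abelian group, Weyl's criterion applied to the characters of $\R^{nk}/\widetilde\Lambda_k$ shows that its image is equidistributed (as $x$ ranges over dilating balls in $\Z^n$) with respect to the unique translation-invariant probability measure $\mu_K$ on $K$. This is essentially the last paragraph of the proof of Proposition~\ref{CalculTauxModel}, except that here $K$ need not be the full torus; in the non-dense case, Weyl's criterion is automatically satisfied for those characters that annihilate $K$, while on $K$ itself it holds for the usual reason. The conclusion is $D(\Gamma_k)=\mu_K\bigl(\operatorname{pr}(W^k)\bigr)$.

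To close the identification, I would interpret the ``uniform measure $\nu$ on the submodule $\operatorname{pr}(0^{n(k-1)},\Z^n)$'' as $\operatorname{covol}(\widetilde\Lambda_k)\cdot\mu_K$. Reading the block-triangular matrix in the definition of $\widetilde M_{A_1,\ldots,A_k}$, the covolume of $\widetilde\Lambda_k$ in $\R^{nk}$ is $|\det\widetilde M_{A_1,\ldots,A_k}|=\prod_{i=1}^k|\det A_i|=\det(A_k\cdots A_1)$, so multiplying the equality of the previous paragraph by this covolume yields exactly the stated identity. In the generic dense case this simply says $\nu$ is Lebesgue measure on the full torus $\R^{nk}/\widetilde\Lambda_k$, consistently with Proposition~\ref{CalculTauxModel}.

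The main subtlety I expect is pinning down the correct normalization of $\nu$ when $K$ is a proper closed subgroup of $\R^{nk}/\widetilde\Lambda_k$, so that the formula remains uniformly valid in every case and reduces to Lebesgue in the dense case. Once the convention $\nu=\operatorname{covol}(\widetilde\Lambda_k)\cdot\mu_K$ is adopted, the argument is a routine combination of the model-set reformulation of $\Gamma_k$ and Weyl equidistribution, both of which were already carried out essentially in full for the proof of Proposition~\ref{CalculTauxModel}.
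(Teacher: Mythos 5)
Your proposal is correct and follows essentially the route the paper implies: it reuses the window characterization $x\in\Gamma_k\Leftrightarrow(0^{n(k-1)},x)\in W^k+\widetilde\Lambda_k$ from the proof of Proposition~\ref{CalculTauxModel} and applies Weyl equidistribution of $x\mapsto\operatorname{pr}_{\R^{nk}/\widetilde\Lambda_k}(0^{n(k-1)},x)$ on the closure $K$ of its image. You also pin down the right normalization of the ``uniform measure'' $\nu$, namely the Haar probability measure $\mu_K$ on $K$ scaled by the covolume $|\det\widetilde M_{A_1,\cdots,A_k}|=|\det(A_k\cdots A_1)|$ of $\widetilde\Lambda_k$, which is the only reading making the identity hold and which reduces to Lebesgue measure on the full quotient in the dense case the paper uses immediately afterwards.
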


In particular, if the image of the map
\[\Z^n\ni x \mapsto \bigoplus_{m=1}^k (A_m)^{-1}\cdots(A_k)^{-1} x\]
projects on a dense subset of the torus $\R^{nk}/\Z^{nk}$, then the quantity~\eqref{densitete} is equal to the volume the projection of $W^k$ on $\R^{nk}/\widetilde\Lambda_k$ (see the end of the proof of Proposition~\ref{CalculTauxModel} and in particular the form of the matrix $\widetilde M_{A_1,\cdots,A_k}^{-1}$). By the hypothesis made on the matrices $A_m$ --- that is, for any $v\in\Z^n\setminus\{0\}$, $\|A_m v\|_\infty \ge 1$ --- this volume is equal to $1$ (simply because the restriction to $W^k$ of the projection $\R^{nk}\mapsto \R^{nk}/\widetilde\Lambda_k$ is injective). Thus, the density of the set $\big( \widehat A_k \circ \cdots \circ \widehat A_1 \big) (\Z^n)$ is equal to $1/(\det(A_k \cdots A_1))$  .

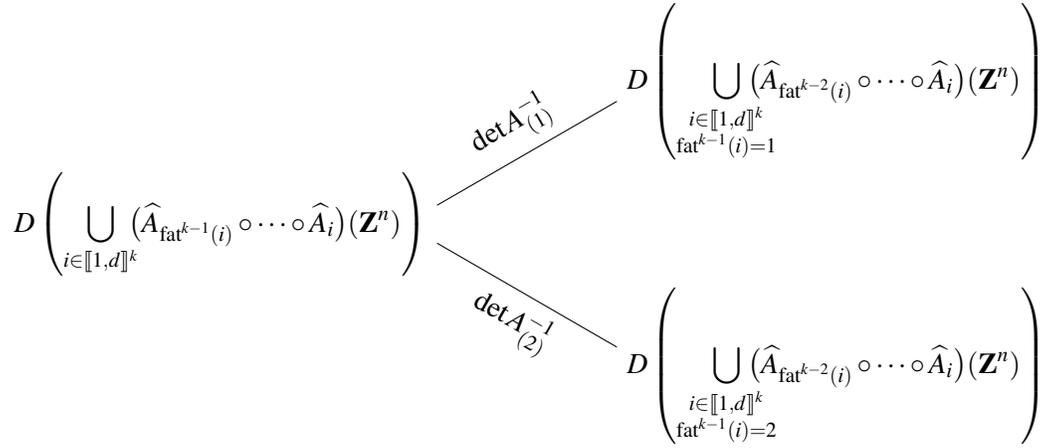
\begin{figure}[t]
\begin{center}
\begin{tikzpicture}[scale=1.26]
\node (O) at (0,0){$\displaystyle D\left(\bigcup_{\ind\in \llbracket 1,d\rrbracket^k}\!\!\!\! \big( \widehat A_{\fat^{k-1}(\ind)} \circ \cdots \circ \widehat A_\ind \big) (\Z^n)\right)$};
\node (A) at (6.5,1.5){$\displaystyle D\left(\bigcup_{\substack{\ind\in \llbracket 1,d\rrbracket^k \\ \fat^{k-1}(\ind) = 1}}\!\!\!\!\!\!\!\!\!\! \big( \widehat A_{\fat^{k-2}(\ind)} \circ \cdots \circ \widehat A_\ind \big) (\Z^n)\right)$};
\node (B) at (6.5,-1.5){$\displaystyle D\left(\bigcup_{\substack{\ind\in \llbracket 1,d\rrbracket^k \\ \fat^{k-1}(\ind) = 2}}\!\!\!\!\!\!\!\!\!\! \big( \widehat A_{\fat^{k-2}(\ind)} \circ \cdots \circ \widehat A_\ind \big) (\Z^n)\right)$};
\draw (O.5) -- (A.185) node[sloped, midway, above]{$\det A_{(1)}^{-1}$};
\draw (O.-5) -- (B.175) node[sloped, midway, below]{$\det A_{(2)}^{-1}$};
\end{tikzpicture}
\end{center}
\caption[Calculus of the density at the level $k$]{Calculus of the density of the image set at the level $k$ according to the density of its sons.}\label{ProbTree2}
\end{figure}
\bigskip

We now consider the general case where $d$ is arbitrary. We take a family $(A_\ind)_{\ind\in I_k}$ of invertible matrices, such that for any $\ind\in I_k$ and any $v\in\Z^n\setminus\{0\}$, we have $\|A_\ind v\|_\infty \ge 1$. A point $x\in\Z^n$ belongs to
\begin{equation}\label{GROS}
\bigcup_{\ind\in \llbracket 1,d\rrbracket^k}\big( \widehat A_{\fat^{k-1}(\ind)} \circ \cdots \circ \widehat A_\ind \big) (\Z^n)
\end{equation}
if and only if there exists $\ind\in \llbracket 1,d\rrbracket^k$ such that $(0^{m-1},x) \in W^k + \widetilde \Lambda_\ind$. Equivalently, a point $x\in\Z^n$ does not belong to the set \eqref{GROS} if and only if for every $\ind\in \llbracket 1,d\rrbracket^k$, we have $(0^{n(k-1)},x) \notin W^k + \widetilde \Lambda_\ind$. Thus, if the image of the map
\[\Z^n\ni x \mapsto \bigoplus_{\ind\in I_k} A_\ind^{-1} A_{\fat(\ind)}^{-1} \cdots A_{\fat^{\len(\ind)}(\ind)}^{-1} x\]
projects on a dense subset of the torus $\R^{n\card I_k}/\Z^{n\card I_k}$, then the events $x\in S_i$, with
\[S_i = \bigcup_{\substack{\ind\in \llbracket 1,d\rrbracket^k \\ \fat^{k-1}(\ind) = i}}\big( \widehat A_{\fat^{k-1}(\ind)} \circ \cdots \circ \widehat A_\ind \big) (\Z^n)\]
are independent (see Figure~\ref{ProbTree2}), meaning that for every $F\subset \llbracket 1,d\rrbracket$, we have
\begin{equation}\label{independant}
D-d\left(\bigcap_{i\in F} S_i\right) = \prod_{i\in F} D_d\big(S_i \big).
\end{equation}
Thus, by the inclusion-exclusion principle, we deduce that
\[D_d\left(\bigcup_{i\in \llbracket 1,d\rrbracket} S_i\right) = \sum_{\emptyset \neq F \subset \llbracket 1,d\rrbracket} (-1)^{\card(F)+1} \prod_{i\in F} D_d\big(S_i \big).\]
Moreover, the fact that for any $\ind\in I_k$ and any $v\in\Z^n\setminus\{0\}$, we have $\|A_\ind v\|_\infty \ge 1$ leads to
\[D_d(S_i) = \det A_{\fat^{k-1}(\ind)}^{-1}\ D_d\left(\bigcup_{\substack{\ind\in \llbracket 1,d\rrbracket^k \\ \fat^{k-1}(\ind) = i}}\big( \widehat A_{\fat^{k-2}(\ind)} \circ \cdots \circ \widehat A_\ind \big) (\Z^n)\right).\]

These facts imply that the density we look for follows the same recurrence relation as $\overline D\big((\det A_\ind^{-1})_\ind\big)$, thus
\[D_d\left(\bigcup_{\ind\in \llbracket 1,d\rrbracket^k}\big( \widehat A_{\fat^{k-1}(\ind)} \circ \cdots \circ \widehat A_\ind \big) (\Z^n)\right) = \overline D\big((\det A_\ind^{-1})_\ind\big).\]
\bigskip

The second part of the lemma is an effective improvement of the first one. To obtain the bound~\eqref{EqLem22}, we combine Lemma~\ref{Weyl} with Lemma~\ref{ppe} to get that for every $\varep>0$, there exists a locally finite collection of submanifolds $V_q$ of $(GL_n(\R))^{\card I_k}$ with positive codimension, such that for every $\eta'>0$, there exists $R_0>0$ such that if $d((A_\ind)_\ind,V_q)>\eta'$ for every $q$, then Equation~\eqref{independant} is true up to $\varep$.

The other bound~\eqref{EqLem222} is obtained independently from the rest of the proof by a direct application of Lemma~\ref{Weyl} and of Lemma~\ref{ppe} applied to $k=1$.
\end{proof}

The following lemma is a direct application of Thom's transversality theorem.

\begin{lemme}[Perturbations in $C^r$ topology]\label{PerturbCr}
Let $1\le r \le +\infty$ and $f$ a generic element of $\mathcal D^r(\T^n)$. Then, for every $k\in\N$, every $\ell'\in\N$ and every finite collection $(V_q)$ of submanifolds of positive codimension of $(GL_n(\R))^{dm}$, there exists $\eta>0$ such that the set
\[T_\eta = \left\{ y\in\T^n\ \middle\vert\ \forall q,\, d\Big( \big( Df_{x}\big)_{\begin{subarray}{l} 1\le m \le k \\ x\in f^{-m}(y)\end{subarray}}\, ,\, V_q\Big) > \eta \right\}\]
contains a finite disjoint union of cubes\footnote{here, a cube is just any ball for the infinite norm.}, whose union has measure bigger than $1-1/\ell'$.
\end{lemme}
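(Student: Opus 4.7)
The plan is to combine Thom's transversality theorem in the $C^r$ category with a measure-theoretic approximation argument. For $f \in \mathcal D^r(\T^n)$, encode the data as a smooth map
\[\Phi_f : \T^n \to (GL_n(\R))^{\card I_k}, \quad y \mapsto \big(Df_x\big)_{1\le m\le k,\, x\in f^{-m}(y)}\]
(well defined single-valued after passing to the finite ramified cover of $\T^n$ indexed by $I_k$, or locally on each of the $d^m$ smooth inverse branches of $f^m$, which exist because $f$ is a local diffeomorphism). The lemma will follow from showing that for a generic $f$, the map $\Phi_f$ is transverse to each $V_q$ of the finite family.

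To establish transversality, first observe that for a generic $f$ and any $y_0 \in \T^n$, the vertices of the tree of preimages $\bigsqcup_{m\le k} f^{-m}(y_0)$ are pairwise distinct points of $\T^n$: this separation is a countable union of open dense conditions. Under this assumption, one perturbs $f$ by bump functions supported in small pairwise disjoint neighbourhoods of these vertices, working \emph{first} near the deepest preimages (level $k$) and then inductively upward, so that each perturbation alters the $1$-jet of $f$ at one vertex without displacing the preimages above it in the tree. This shows that the evaluation map $(f,y)\mapsto \Phi_f(y)$ is submersive onto $(GL_n(\R))^{\card I_k}$ at every $y$ whose tree of preimages is unramified. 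A standard Thom--Baire argument then produces, for each $V_q$ in the finite family, a residual open set $\mathcal G_q \subset \mathcal D^r(\T^n)$ making $\Phi_f \pitchfork V_q$; intersecting gives a residual $\mathcal G$.

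For $f \in \mathcal G$, transversality together with $V_q$ having positive codimension implies that $\Phi_f^{-1}(V_q)$ is a submanifold of $\T^n$ of positive codimension, so the closed set $K_0 = \bigcup_q \Phi_f^{-1}(V_q)$ has Lebesgue measure zero. The decreasing family of closed sets
\[K_\eta = \left\{ y \in \T^n : \exists q,\ d\big(\Phi_f(y), V_q\big) \le \eta \right\}\]
satisfies $\bigcap_{\eta>0} K_\eta = K_0$, so by continuity from above of Lebesgue measure, $\Leb(K_\eta) \to 0$ as $\eta \to 0^+$. Choosing $\eta>0$ with $\Leb(K_\eta) < 1/(2\ell')$, the complement $T_\eta = \T^n \setminus K_\eta$ is open with $\Leb(T_\eta) > 1 - 1/(2\ell')$; any open subset of $\T^n$ can be exhausted from the inside by finite disjoint unions of open cubes up to arbitrarily small measure error, so a suitable such union inside $T_\eta$ has total measure exceeding $1 - 1/\ell'$.

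The main obstacle is the transversality step: perturbing $f$ near a preimage $x$ changes $Df_x$ but also shifts the locations of all iterated preimages sitting above $x$ in the tree. The ``bottom-up'' perturbation scheme outlined above resolves this, but verifying rigorously that the $1$-jets at the vertices of the tree can be prescribed independently (after a preliminary generic perturbation separating the tree of preimages) is the technical heart of the argument. Once that point is settled, the Thom--Baire machinery and the measure-theoretic approximation by cubes are routine.
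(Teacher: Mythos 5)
Your proposal follows essentially the same route as the paper: invoke Thom's transversality theorem to conclude that the bad set $\Phi_f^{-1}(\bigcup_q V_q)$ is, for generic $f$, a locally finite union of positive-codimension submanifolds (hence Lebesgue-null), then let $\eta \to 0^+$ and use compactness/continuity-from-above to find $\eta$ with $\Leb(T_\eta^\complement)$ small, and finally approximate the open set $T_\eta$ from inside by a finite disjoint union of cubes. The only difference is one of detail: you spell out why the jet evaluation map can be made submersive at a generic point (separating the preimage tree and perturbing bottom-up), a technical point the paper compresses into the single phrase ``By Thom's transversality theorem''; your version is more explicit but not a different argument.
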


\begin{proof}[Proof of Lemma~\ref{PerturbCr}]
By Thom's transversality theorem, for a generic map $f\in\mathcal D^r(\T^n)$, the set
\[\left\{ y\in\T^n\ \middle\vert\ \exists q,\, \big(Df_{x}\big)_{\begin{subarray}{l} 1\le m \le k \\ x\in f^{-m}(y)\end{subarray}}\in V_q \right\}\]
is a locally finite union of positive codimension submanifolds. Thus, the sets $T_\eta^{\scriptscriptstyle\complement}$ are compact sets and their (decreasing) intersection over $\eta$ is a locally finite union of positive codimension submanifolds. So, there exists $\eta>0$ such that $T_\eta^{\scriptscriptstyle\complement}$ is close enough to this set for Hausdorff topology to have the conclusions of the lemma.
\end{proof}

We can now begin the proof of Theorem~\ref{TauxExpand}.

\begin{proof}[Proof of Theorem~\ref{TauxExpand}]
Let $f$ be a generic element of $\mathcal D^r(\T^n)$. The idea is to cut the torus $\T^n$ into small pieces on which $f$ is very close to its order $1$ Taylor expansion.

Let $m\in\N$ and $\ell\in\N^*$. We want to prove that the set of accumulation points of the sequence $(\tau^m_N(f))_N$ is included in the ball of radius $1/\ell$ and centre
\[\int_{\T^n} \overline D\big( (Df_x)_{\begin{subarray}{l} 1\le m\le k \\ x\in f^{-m}(y)\end{subarray}}\big) \ud \Leb(y).\]
(that is, the right side of Equation~\eqref{EqIntMieux}).

To do that, we first set $\ell' = 3\ell$ and $c=d(1-d^k)/(1-d) = \card(I_k)$, and use Lemma~\ref{LemTauxExpand} to get a locally finite union of positive codimension submanifolds $V_q$ of $(GL_n(\R))^{\card(I_k)}$. We then apply Lemma~\ref{PerturbCr} to these submanifolds and to $\ell' = 4\ell$; as $f$ is generic this gives us a parameter $\eta>0$ such that the set
\[\left\{ y\in\T^n\ \middle\vert\ \forall q,\, d\Big( \big( Df_{x}\big)_{\begin{subarray}{l} 1\le m\le k \\ x\in f^{-m}(y)\end{subarray}}\, ,\, V_q\Big)<\eta \right\}\]
is contained in a disjoint finite union $\mathcal C$ of cubes, whose union has measure smaller than $1/(4\ell)$. Finally, we apply Lemma~\ref{LemTauxExpand} to $\eta' = \eta/2$; this gives us a radius $R_0>0$ such that if $(A_\ind)_{\ind \in I_k}$ is a family of matrices of $GL_n(\R)$ satisfying $d((A_\ind)_\ind, V_q)>\eta/2$ for every $q$, then for every $R\ge R_0$, and every family $(v_\ind)_{\ind\in I_k}$ of vectors of $\R^n$, we have
\begin{equation}\label{DiffDensGene}
\left|D_R\left(\bigcup_{\ind\in \llbracket 1,d\rrbracket^k}\big( \pi(A_{\fat^{k-1}(\ind)} + v_{\fat^{k-1}(\ind)}) \circ \cdots \circ \pi(A_\ind + v_\ind) \big) (\Z^n) \right) - \overline D\big((\det A_\ind^{-1})_\ind\big)\right| < \frac{1}{3\ell},
\end{equation}
and for every $i,j$,
\begin{equation}\label{ModuloGene}
D_R\left\{x\in \big(A_{j^m(\ind)} + v_{j^m(\ind)}\big)(\Z^n)\ \middle\vert\ d\big(x,(\Z^n)' \big) < \frac{1}{3\ell(2n+1)\card I_k} \right\} < \frac{1}{3\ell\card I_k}.
\end{equation}
\bigskip

Remark that if $\delta'$ is small enough, then the set
\[\left\{ y\in\T^n\ \middle\vert\ \forall q,\, d\Big( \big( Df_{x}\big)_{\begin{subarray}{l} 1\le m\le k \\ x\in f^{-m}(y)\end{subarray}}\, ,\, V_q\Big)>\eta/2 \right\}\]
contains a set $\mathcal C'$, which is a finite union of cubes whose union has measure bigger than $1-1/(3\ell)$.

Let $C$ be a cube of $\mathcal C'$, $y\in C$ and $x\in f^{-m}(y)$, with $1\le m \le k$. We write the Taylor expansion of order 1 of $f$ at the neighbourhood of $x$; by compactness we obtain
\[\sup\left\{\frac{1}{\|z\|}\big\|f(x+z)-f(x)-Df_x(z)\big\|\ \middle|\ x\in C,\,z\in B(0,\rho)\right\}\underset{\rho\to 0}{\longrightarrow}0.\]
Thus, for every $\varep > 0$, there exists $\rho>0$ such that for all $x\in C$ and all $z\in B(0,\rho)$, we have
\begin{equation}\label{eqetata}
\left\|f(x+z)-\big(f(x)+Df_x(z)\big)\right\| < \varep\|z\| \le \varep \rho.
\end{equation}

We now take $R\ge R_0$. We want to find an order of discretization $N$ such that the error made by linearizing $f$ on $B(x,R/N)$ is small compared to $N$, that is, for every $z\in B(0,R/N)$, we have
\[\left\|f(x+z)-\big(f(x)+Df_x(z)\big)\right\| < \frac{1}{3\ell(2n+1)\card I_k}\cdot\frac{1}{N}.\]
To do that, we apply Bound~\eqref{eqetata} to
\[\varep = \frac{1}{3R\ell(2n+1)\card I_k},\]
to get a radius $\rho>0$ (we can take $\rho$ as small as we want), and we set $N = \lceil R/\rho \rceil$ (thus, we can take $N$ as big as we want). By~\eqref{eqetata}, for every $z\in B(0,R)$, we obtain the desired bound:
\[\left\|f(x+z/N)-\big(f(x)+Df_x(z/N)\big)\right\| < \frac{1}{3\ell(2n+1)\card I_k}\cdot\frac{1}{N}.\]
Combined with \eqref{ModuloGene}, this leads to
\begin{equation}\label{eqetatata}
\frac{\card\Big(f_N\big(B(x,R/N)\big)\, \Delta\, P_N\big(f(x) + Df_x (B(0,R/N))\big)\Big)}{\card\big(B(x,R/N) \cap E_N\big)} \le \frac{1}{3\ell \card I_k};
\end{equation}
in other words, on every ball of radius $R/N$, the image of $E_N$ by $f_N$ and the discretization of the linearization of $f$ are almost the same (that is, up to a proportion $1/(3\ell \card I_k)$ of points).

We now set $R_1 = R_0 \|f'\|_\infty^m$, and choose $R\ge R_1$, to which is associated a number $\rho>0$ and an order $N = \lceil R/\rho \rceil$, that we can choose large enough so that $2R/N \le \|f'\|_\infty$. We also choose $y\in C$. As
\[\card\big(f_N^m(E_N) \cap B(y,R/N) \big) = \card\left( \bigcup_{x\in f^{-m}(y)} f_N^m\big(B(x,R/N) \cap E_N\big) \cap B(y,R/N)\right),\]
and using the estimations~\eqref{DiffDensGene} and~\eqref{eqetatata}, we get
\[\left|\frac{\card\big(f_N^m(E_N) \cap B(y,R/N) \big)}{\card\big(B(y,R/N) \cap E_N\big)} -\overline D\big( (\det Df_x^{-1})_{\begin{subarray}{l} 1\le m\le k \\ x\in f^{-m}(y)\end{subarray}}\big)\right| < \frac{2}{3\ell}.\]
As such an estimation holds on a subset of $\T^n$ of measure bigger than $1-1/(3\ell)$, we get the conclusion of the theorem.
\end{proof}
\bigskip

We can easily adapt the proof of Lemma~\ref{LemTauxExpand} to the case of sequences of matrices, without expansiveness hypothesis.

\begin{lemme}\label{LemTauxDiff2}
For every $k\in\N$ and every $\ell', c\in\N$, there exists a locally finite union of positive codimension submanifolds $V_q$ of $(GL_n(\R))^{k}$ (respectively $(SL_n(\R))^{k}$) such that for every $\eta'>0$, there exists a radius $R_0>0$ such that if $(A_m)_{1\le m\le k}$ is a finite sequence of matrices of $(GL_n(\R))^{k}$ (respectively $(SL_n(\R))^{k}$) satisfying $d((A_m)_m, V_q)>\eta'$ for every $q$, then for every $R\ge R_0$, and every family $(v_m)_{1\le m \le k}$ of vectors of $\R^n$, we have
\[\left|D_R\left(\big( \widehat A_{k} \circ \cdots \circ \widehat A_1\big) (\Z^n) \right) - \det(A_k^{-1}\cdots A_1^{-1})\overline\tau^k(A_1,\cdots,A_k)\right| < \frac{1}{\ell'}\]
(the density of the image set is ``almost invariant'' under perturbations by translations), and for every $m\le k$, we have\footnote{Recall that $(\Z^n)'$ stands for the set of points of $\R^n$ at least one coordinate of which belongs to $\Z+1/2$.}
\[D_R\left\{x\in \big(A_{m} + v_{m}\big)(\Z^n)\ \middle\vert\ d\big(x,(\Z^n)' \big) < \frac{1}{{c}\ell'(2n+1)} \right\} < \frac{1}{{c}\ell'}\]
(there is only a small proportion of the points of the image sets which are obtained by discretizing points close to $(\Z^n)'$).
\end{lemme}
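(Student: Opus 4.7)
The plan is to adapt the proof of Lemma~\ref{LemTauxExpand} to this simpler ``sequential'' setting: there is no branching tree anymore (it reduces to the case $d=1$), but the expansivity hypothesis has been dropped, so the determinant factor must be put back in.

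First, I would revisit the model-set identity of Lemma~\ref{ppe}. Arguing exactly as in the proof of Lemma~\ref{LemTauxExpand} (case $d=1$), a point $x \in \Z^n$ belongs to $(\widehat A_k \circ \cdots \circ \widehat A_1)(\Z^n)$ if and only if $(0^{n(k-1)}, x)$ lies in $W^k + \widetilde\Lambda_k$, and the covolume of $\widetilde\Lambda_k$ is $\det(A_1 \cdots A_k)$. A direct counting argument then gives
\[\det(A_k \cdots A_1) \cdot D\!\left((\widehat A_k \circ \cdots \circ \widehat A_1)(\Z^n)\right) = \nu\!\left(\operatorname{pr}_{\R^{nk}/\widetilde\Lambda_k}(W^k)\right),\]
where $\nu$ is the uniform measure on the projection of $\{0^{n(k-1)}\} \times \Z^n$. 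When this projection is equidistributed, the right-hand side is precisely $\overline\tau^k(A_1,\cdots,A_k)$. Translating by $(v_m)_m$ in the discretizations only translates $W^k$, which affects neither volumes nor the Weyl argument below.

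Second, I would apply the effective Weyl criterion (Lemma~\ref{Weyl}) to the linear action of $\Z^n$ on $\R^{nk}/\Z^{nk}$ obtained after changing basis by $\widetilde M_{A_1,\cdots,A_k}^{-1}$, namely $x \mapsto \bigoplus_{m=1}^k A_m^{-1} \cdots A_k^{-1} x$. For any $\ell'$, Lemma~\ref{Weyl} provides a locally finite family of affine hyperplanes in the space of rotation vectors, outside an $\eta$-neighborhood of which Birkhoff averages are $1/\ell'$-close to Lebesgue measure in the weak-$*$ topology for every $R \geq R_0(\eta,\ell')$. Pulling these hyperplanes back through the rational map $(A_m)_m \mapsto (A_m^{-1} \cdots A_k^{-1})_m$ yields a locally finite collection of positive codimension real-analytic submanifolds $V_q \subset (GL_n(\R))^k$; intersecting with $\{\det = 1\}^k$ gives the $SL_n(\R)$ version. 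Combined with the first step, this produces the first announced bound, with $R_0$ uniform in $(A_m)_m$ at distance at least $\eta'$ from every $V_q$ and uniform in $(v_m)_m$ (since translations do not enter the Fourier side of Weyl's criterion).

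For the second bound, I would argue independently of the composition: the statement concerns only the equidistribution modulo $\Z^n$ of $(A_m + v_m)(\Z^n)$, which is the case $k=1$ of the previous step. The $1/(c\ell'(2n+1))$-tubular neighborhood of $(\Z^n)'$ is contained in a union of $n$ slabs of width $2/(c\ell'(2n+1))$ in a fundamental domain of $\Z^n$, hence has Lebesgue measure at most $2n/(c\ell'(2n+1)) < 1/(c\ell')$; applying Lemma~\ref{Weyl} to $A_m$ and absorbing the exceptional hyperplanes into the family $(V_q)_q$ gives the estimate. The main technical subtlety --- and the reason for the hypothesis $d((A_m)_m, V_q) > \eta'$ --- is that the radius $R_0$ must be uniform both in $(A_m)_m$ (away from $\bigcup_q V_q$) and in $(v_m)_m$; this is exactly what the effective Lemma~\ref{Weyl} delivers, since its bound $M_0$ depends on the rotation vector only through its distance to the rational hyperplanes, and since translations do not enter its Fourier-analytic proof at all.
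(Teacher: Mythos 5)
Your proposal is correct and takes essentially the same route as the paper, which simply states that the proof of Lemma~\ref{LemTauxExpand} adapts directly to the non-expanding, sequential ($d=1$) case. You correctly restore the determinant factor in the model-set identity of Lemma~\ref{ppe}, observe that the translations $(v_m)_m$ only shift the window $W^k$ and so leave the Weyl/equidistribution argument untouched, and obtain the second bound from the $k=1$ case of the first, exactly as the paper indicates.
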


With the same proof as Theorem~\ref{TauxExpand}, Lemma~\ref{LemTauxDiff2} leads to the local-global formula for generic $C^r$-diffeo\-morphisms (Theorem~\ref{convBisMieux}).

\section{Asymptotic rate of injectivity for a generic conservative diffeo\-morphism}\label{ChapAsympto}

The goal of this section is to prove that for any $r\ge 1$, the degree of recurrence of a generic conservative $C^r$-diffeomorphism is equal to 0.

\begin{theoreme}\label{limiteEgalZero}
Let $r\in[1,+\infty]$. For a generic conservative diffeomorphism $f\in\Diff^r(\T^n,\Leb)$, we have
\[\lim_{k\to\infty}\tau^k(f)=0;\]
more precisely, for every $\varep>0$, the set of diffeomorphisms $f\in\Diff^r(\T^n,\Leb)$ such that $\lim_{t\to +\infty} \tau^k(f)<\varep$ is open and dense.

In particular\footnote{Using Equation~\eqref{intervLim} page~\pageref{intervLim}.}, we have $\lim_{N\to +\infty} \Dr(f_N) = 0$.
\end{theoreme}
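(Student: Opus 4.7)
The plan is to combine three ingredients that are already available: the local-global formula (Theorem~\ref{convBisMieux}), the linear result on asymptotic rates (Theorem~\ref{ConjPrincip}), and Birkhoff's ergodic theorem applied to the $f$-invariant measure $\Leb$. The last sentence of the statement is a consequence of $\tau^\infty(f)=0$ via Equation~\eqref{intervLim}, so I only need to deal with the rates of injectivity. The genericity claim reduces, by Baire's theorem and intersection over $\varep=1/n$, to showing that for every fixed $\varep>0$ the set $\{f:\tau^\infty(f)<\varep\}$ is open and dense in $\Diff^r(\T^n,\Leb)$. Openness is immediate from the continuity of $\tau^k$ in $f$ asserted by Theorem~\ref{convBisMieux}, combined with the monotonicity $\tau^{k+1}(f)\le\tau^k(f)$ (which follows from $f_N^{k+1}(E_N)\subset f_N^k(E_N)$ after taking $\limsup_N$): as soon as $\tau^k(f)<\varep$ for some $k$, one has $\tau^\infty(g)\le\tau^k(g)<\varep$ for every $g$ in a $C^r$-neighbourhood of $f$.

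For density, I fix $f$ and a neighbourhood $\mathcal U$ of it in $\Diff^r(\T^n,\Leb)$. The second part of Theorem~\ref{ConjPrincip}, applied to $\varep$, furnishes a locally finite family of positive-codimension submanifolds $V_q\subset SL_n(\R)$ such that for any $\delta>0$, any sequence $(A_k)\in\ell^\infty(SL_n(\R))$ satisfying $d(A_k,\bigcup_q V_q)>\delta$ at positive asymptotic density has $\tau^\infty((A_k))\le\varep$. Next I establish the conservative analogue of Lemma~\ref{PerturbCr}: for any prescribed $\eta>0$, by Thom's transversality theorem combined with standard volume-preserving perturbations (locally supported Hamiltonian-type corrections, or Moser's deformation lemma), I can pick $g\in\mathcal U$ and $\delta>0$ such that $g$ lies in the generic set where Theorem~\ref{convBisMieux} applies and such that
\[A_\delta:=\{x\in\T^n\mid d(Dg_x,\textstyle\bigcup_q V_q)>\delta\}\]
satisfies $\Leb(A_\delta)>1-\eta$. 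The point is that for generic $f$ the map $Df:\T^n\to SL_n(\R)$ is transverse to each $V_q$, so $(Df)^{-1}(V_q)$ is a positive-codimension submanifold, hence of zero Lebesgue measure, and a fortiori the $\delta$-neighbourhood has measure $o_\delta(1)$.

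Now apply Birkhoff's ergodic theorem to the $g$-invariant measure $\Leb$ and the bounded function $\1_{A_\delta}$: the time average $\tilde h(x)=\lim_k\frac1k\sum_{i=0}^{k-1}\1_{A_\delta}(g^i(x))$ exists $\Leb$-a.e., is $g$-invariant, and integrates to $\Leb(A_\delta)>1-\eta$. Since $\{\tilde h=0\}$ is $g$-invariant and contributes zero integral of $\1_{A_\delta}$, it has measure at most $\eta$. Hence for $\Leb$-a.e. $x$ outside a set of measure $\eta$, the sequence $(Dg_{g^i(x)})_{i\ge 0}$ avoids the $\delta$-neighbourhood of $\bigcup_q V_q$ at positive asymptotic density, and Theorem~\ref{ConjPrincip} gives $\tau^\infty((Dg_{g^i(x)})_i)\le\varep$. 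Invoking the local-global formula of Theorem~\ref{convBisMieux}, the fact that $\overline\tau^k$ coincides with $\tau^k$ for generic sequences (Corollary~\ref{conttaukk}), and the monotonicity of $\overline\tau^k$ in $k$, monotone convergence yields
\[\tau^\infty(g)=\lim_k\int_{\T^n}\overline\tau^k(Dg_x,\ldots,Dg_{g^{k-1}(x)})\ud\Leb(x)=\int_{\T^n}\tau^\infty\big((Dg_{g^i(x)})_i\big)\ud\Leb(x)\le\varep+\eta,\]
so choosing $\eta<\varep$ at the outset and rescaling $\varep$ gives density. The main obstacle is the conservative transversality step: producing a $C^r$-small perturbation \emph{within} $\Diff^r(\T^n,\Leb)$ whose derivative evades a prescribed locally finite family of positive-codimension submanifolds of $SL_n(\R)$ on almost all of $\T^n$. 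This is straightforward in $C^1$ via elementary Hamiltonian bumps, but in higher regularity it requires combining Thom's theorem with careful volume-preserving corrections; everything else in the argument is essentially bookkeeping on top of Theorems~\ref{convBisMieux} and~\ref{ConjPrincip}.
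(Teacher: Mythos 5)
Your proposal follows the same structure as the paper's proof: Theorem~\ref{ConjPrincip} provides the submanifolds $V_q$, a conservative Thom-transversality argument (the volume-preserving analogue of Lemma~\ref{PerturbCr}) shows that $Df$ avoids a neighbourhood of $\bigcup_q V_q$ on a set of measure close to $1$, Birkhoff's theorem propagates this along a.e.\ orbit, and the local-global formula of Theorem~\ref{convBisMieux} closes the loop. The transversality-in-$\Diff^r(\T^n,\Leb)$ issue you flag is real but shared with the paper (which invokes Lemma~\ref{PerturbCr}, stated for expanding maps, without spelling out the conservative version), and your choice of Birkhoff threshold and your monotone-convergence passage to the limit are cosmetic variants of the paper's splitting of the integral over $T'$ and $(T')^\complement$ at a fixed time $k_0$.
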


It will be obtained by using the local-global formula (Theorem~\ref{convBisMieux}) and the result about the asymptotic rate of injectivity of a generic sequence of matrices (Theorem~\ref{ConjPrincip}). The application of the linear results will be made through Thom transversality arguments (Lemma \ref{PerturbCr}).

\begin{proof}[Proof of theorem \ref{limiteEgalZero}]
Let $\varep>0$, $r\in[0,+\infty]$ and consider a generic element $f$ of $\Diff^r(\T^n,\Leb)$. Applying the second part of Theorem \ref{ConjPrincip}, one gets a locally finite collection of submanifolds $V_q$ of $SL_n(\R)$, which allow to define the set
\[T_\eta = \left\{y\in\T^n \mid \forall q,\, d\big( Df_y,V_q\big) \ge\eta \right\}.\]
Then, Birkhoff theorem (non-ergodic version) applied to Lebesgue measure implies that the set $T_\eta^\infty$ of points $y\in\T^n$ such that
\[\limsup_{m\to +\infty}\frac{1}{m}\sum_{k=0}^{m-1} \1_{T_\eta}(f^k(x))\ge \frac12 \]
has Lebesgue measure greater than $2\Leb(T_\eta) - 1$.

Now, Lemma \ref{PerturbCr} (transversality) implies that as $f$ is a generic element of $\Diff^r(\T^n,\Leb)$, there exists $\eta>0$ such that $\Leb(T_\eta)>1-\varep$, in particular $\Leb(T_\eta^\infty)>1-2\varep$. Then, applying the second part of Theorem \ref{ConjPrincip}, we get that for all $y\in T_\eta^\infty$, we have $\overline\tau^\infty\big((Df_{f^k(y)})_k\big) \le\varep$. In particular, there exists $k_0\in\N$ and $T'\subset T_\eta^\infty$, with $\Leb(T') \ge 1-3\varep$ such that for all $y\in T'$, we have $\overline\tau^{k_0}\big((Df_{f^k(y)})_k\big) \le \varep$ (as the sets of points $y\in T_\eta^\infty$, such that  $\overline\tau^k\big((Df_{f^k(y)})_k\big) \le\varep$ have their increasing union over $k$ equal to $T_\eta^\infty$). Applying Theorem \ref{convBisMieux}, one gets that
\begin{align*}
\tau^k(f) & \le \int_{T'} \overline\tau^k\left(Df_x, \cdots, D f_{f^{k-1}(x)}\right) \ud \Leb(x) + \int_{{T'}^\complement} 1 \ud \Leb(x)\\
   & \le \varep + 3\varep = 4\varep.
\end{align*}
\end{proof}

\section{Asymptotic rate of injectivity for a generic dissipative diffeomorphism}\label{SecJairo}

Here we tackle the issue of the asymptotic rate of injectivity of generic dissipative diffeomorphisms. Again, we will consider the torus $\T^n$ equipped with Lebesgue measure $\Leb$ and the canonical measures $E_N$, see Section~\ref{AddendSett} for a more general setting where the result is still true. The study of the rate of injectivity for generic dissipative diffeomorphisms is based on the following theorem of A.~Avila and J.~Bochi.

\begin{theoreme}[Avila, Bochi]\label{ArturJairo}
Let $f$ be a generic $C^1$ maps of $\T^n$. Then for every $\varep>0$, there exists a compact set $K\subset \T^n$ and an integer $m\in\N$ such that
\[\Leb(K) > 1-\varep \qquad \text{and} \qquad \Leb(f^m(K))<\varep.\]
\end{theoreme}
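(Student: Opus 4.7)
The plan is to proceed via Baire category on $\Diff^1(\T^n)$. For each $k \in \N^*$, I introduce
\[\mathcal V_k = \Big\{f \in \Diff^1(\T^n) : \exists\, K \text{ compact},\ \exists\, m \in \N,\ \Leb(K) > 1 - 1/k \text{ and } \Leb(f^m(K)) < 1/k\Big\},\]
and aim to show that each $\mathcal V_k$ contains a $C^1$-open and dense subset. Any $f$ in the residual intersection $\bigcap_k \mathcal V_k$ then satisfies the theorem by choosing $k$ with $1/k < \varep$.

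Openness of $\mathcal V_k$ is routine. Given $f \in \mathcal V_k$ with witness $(K,m)$, I use regularity of Lebesgue measure to replace $K$ by a slightly smaller compact $K' \subset \mathring K$ still satisfying $\Leb(K') > 1-1/k$, and then enclose the compact image $f^m(K')$ in an open neighbourhood $V$ with $\Leb(V) < 1/k$. For $g$ sufficiently close to $f$ in $C^0$ topology, the iterate $g^m$ is uniformly close to $f^m$ on the compact $K'$, so $g^m(K') \subset V$ and $g \in \mathcal V_k$ with the same witness $(K',m)$.

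For density, my starting point is the elementary observation that for any $f \in \Diff^1(\T^n)$, the change of variables formula gives $\int_{\T^n} |\det Df|\, d\Leb = 1$, so by Jensen's inequality $\int \log|\det Df|\, d\Leb \le 0$, with equality iff $f$ preserves Lebesgue measure. A local ``dissipative bump'' perturbation $g = f \circ h$, with $h$ a non-volume-preserving diffeomorphism supported in a small ball and arbitrarily $C^1$-close to the identity, strictly decreases this integral, so after an arbitrarily small $C^1$ perturbation I may assume $\int \log|\det Df|\, d\Leb = -c < 0$. The cocycle identity $\log|\det Df^m|(x) = \sum_{i=0}^{m-1} \log|\det Df|(f^i x)$ then presents $\log|\det Df^m|$ as a Birkhoff sum of spatial mean $-mc$. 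I would like to extract, for some large $m$, a compact $K_m$ of measure $> 1-1/k$ on which $|\det Df^m| < 1/k$; since $f^m$ is a diffeomorphism, this yields directly $\Leb(f^m(K_m)) = \int_{K_m}|\det Df^m|\, d\Leb < 1/k$.

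The main obstacle is precisely this last extraction. Since Lebesgue measure is not $f$-invariant in the dissipative setting and $f$ need not be ergodic, Birkhoff's theorem does not apply directly, and there is no a priori concentration of $\frac{1}{m}\log|\det Df^m|$ near the spatial mean $-c$. The Avila--Bochi strategy to overcome this is a genuine $C^1$ perturbation argument in the spirit of Ma\~n\'e's theorem: one further perturbs $f$ so that every $f$-invariant probability measure arising as a weak-$*$ accumulation point of the Ces\`aro averages of $f^i_* \Leb$ has strictly negative sum of Lyapunov exponents. Combining this property with the ergodic decomposition and the Oseledets theorem forces $\frac{1}{m}\log|\det Df^m|(x) \to \sigma(x) < 0$ for Lebesgue-almost every $x$, and Egorov's theorem then produces the desired $K_m$. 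It is exactly this perturbation step that restricts the statement to $C^1$ regularity, where the Bochi--Ma\~n\'e flexibility on Lyapunov exponents is available; extending it to higher $C^r$ regularity in the dissipative setting is precisely one of the open questions listed at the end of the introduction.
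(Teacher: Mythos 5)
The paper does not actually prove this statement; immediately after it, it says ``This statement is obtained by combining Lemma~1 and Theorem~1 of \cite{MR2267725}'', so the real comparison is with the Avila--Bochi argument. Your Baire-category skeleton, the openness step, and the Jensen/change-of-variables observation that a small $C^1$ perturbation gives $\int_{\T^n}\log|\det Df|\,d\Leb<0$ are all sound and consistent with what is needed.

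The genuine gap is in the passage from negative Jacobian integrals to the Lebesgue escape property. You correctly flag that Birkhoff does not apply because $\Leb$ is not invariant, but your proposed repair does not close the hole: Oseledets' theorem and the ergodic decomposition give convergence of $\frac1m\log|\det Df^m|(x)$ for $\mu$-a.e.\ $x$ only for \emph{invariant} $\mu$, and arranging negative Jacobian integrals for the weak-$*$ limits of $\frac1m\sum_{i<m}f^i_*\Leb$ does not yield Lebesgue-a.e.\ convergence to a negative limit --- nothing in your sketch rules out, for instance, these time averages converging to limits of opposite sign on two sets of positive Lebesgue measure with only the overall mean being negative. So the step ``Oseledets $+$ ergodic decomposition $+$ Egorov $\Rightarrow K_m$'' is unjustified, and it is also not the Avila--Bochi strategy: their argument does not pass through Lyapunov exponents or Oseledets at all. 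Their Theorem~1 is a soft Kryloff--Bogoliouboff/escape-of-mass dichotomy characterising the escape property in the statement in terms of the non-existence of an absolutely continuous invariant probability, and their Lemma~1 is the perturbative part showing that possessing such a measure is a meagre condition in $C^1$; combining the two is precisely the citation the paper gives. Your framing does correctly diagnose why the result is $C^1$-specific, but as a proof it needs a genuinely new idea to bridge Lebesgue and invariant measures; the actual argument sidesteps Lyapunov exponents entirely.
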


This statement is obtained by combining Lemma 1 and Theorem 1 of \cite{MR2267725}.

\begin{rem}
As $C^1$ expanding maps of $\T^n$ and $C^1$ diffeomorphisms of $\T^n$ are open subsets of the set of $C^1$ maps of $\T^n$, the same theorem holds for generic $C^1$ expanding maps and $C^1$ diffeomorphisms of $\T^n$ (this had already been proved in the case of $C^1$-expanding maps by A.~Quas in \cite{MR1688216}).
\end{rem}

This theorem can be used to compute the asymptotic rate of injectivity of a generic diffeomorphism.

\begin{coro}\label{CoroCoroArturJairo}
The asymptotic rate of injectivity of a generic dissipative diffeomorphism $f\in\Diff^1(\T^n)$ is equal to 0. In particular, the degree of recurrence $\Dr(f_N)$ of a generic dissipative diffeomorphism tends to 0 when $N$ goes to infinity.
\end{coro}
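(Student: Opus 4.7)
The plan is to deduce the corollary directly from the Avila--Bochi theorem (Theorem~\ref{ArturJairo}), combined with the basic estimate~\eqref{intervLim}. It is enough to prove that on a $G_\delta$ dense subset of $\Diff^1(\T^n)$ one has $\tau^\infty(f) = 0$, since \eqref{intervLim} then yields $\limsup_N D(f_N) \le \tau^\infty(f) = 0$. Since $\tau^\infty(f) = \lim_m \tau^m(f)$ is the infimum of a decreasing sequence, it will suffice, for every fixed $\varepsilon > 0$, to exhibit a single integer $m$ with $\tau^m(f) \le 3\varepsilon$ on a dense $G_\delta$ subset $\mathcal{G}_\varepsilon$ of $\Diff^1(\T^n)$, and then to take the intersection of the $\mathcal{G}_{1/k}$.

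Fix $\varepsilon > 0$ and let $f$ belong to the generic set produced by Theorem~\ref{ArturJairo} (which applies to diffeomorphisms, by the remark following that theorem), furnishing a compact set $K \subset \T^n$ and an integer $m$ with $\Leb(K) > 1-\varepsilon$ and $\Leb(f^m(K)) < \varepsilon$. The next step is to compare $(f_N)^m$ with $f^m$ pointwise on the grid: since $f$ is Lipschitz on the compact torus, with some constant $L \ge 1$, and since $d(f_N(x), f(x)) \le \sqrt{n}/(2N)$ for every $x \in E_N$, a straightforward induction on $m$ yields
\[
\sup_{x \in E_N} d\bigl((f_N)^m(x),\, f^m(x)\bigr) \le \eta_N := \frac{\sqrt{n}}{2N}\sum_{j=0}^{m-1} L^j,
\]
which tends to $0$ as $N\to\infty$. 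In particular, $(f_N)^m(E_N \cap K)$ is contained in the $\eta_N$-neighborhood of the compact set $f^m(K)$.

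To conclude, the count of $(f_N)^m(E_N)$ is split according to the partition $E_N = (E_N \cap K) \sqcup (E_N \setminus K)$, and each contribution is estimated via the equidistribution of $E_N$ on $\T^n$. Compactness of $f^m(K)$ gives $\Leb\bigl(V_{\eta_N}(f^m(K))\bigr) \to \Leb(f^m(K)) < \varepsilon$ as $N \to \infty$, so the number of grid points in this neighborhood is at most $(2\varepsilon+o(1))\,\card(E_N)$; on the other hand $\card(E_N \setminus K) \le (\varepsilon + o(1))\,\card(E_N)$, again by equidistribution, and this trivially bounds $\card\bigl((f_N)^m(E_N \setminus K)\bigr)$. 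Adding these two contributions yields $\tau^m(f_N) \le 3\varepsilon + o(1)$, hence $\tau^m(f) \le 3\varepsilon$ and therefore $\tau^\infty(f) \le 3\varepsilon$, as required. The main (mild) point of attention is the simultaneous control of $\eta_N$, of the Lebesgue approximation of $V_{\eta_N}(f^m(K))$, and of the equidistribution error, but all three decay with $N$ and the essential input is entirely packaged in Theorem~\ref{ArturJairo}; the rest is routine bookkeeping.
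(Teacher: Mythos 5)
Your overall strategy is the same as the paper's---apply Theorem~\ref{ArturJairo} to get a compact set $K$ that $f^m$ crushes, compare $(f_N)^m$ to $f^m$ pointwise, and then count grid points in $K$ versus grid points near $f^m(K)$---but you have glossed over a measurability subtlety that the paper goes to some length to handle, and that step of your argument is genuinely incomplete as written.

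The problem is the claim ``$\card(E_N\setminus K)\le(\varepsilon+o(1))\,\card(E_N)$, again by equidistribution.'' Equidistribution of the uniform grids gives, for a compact set $F$, only the upper estimate $\limsup_N \card(E_N\cap F)/\card(E_N)\le\Leb(F)$ (cover $F$ by finitely many cubes of total measure $<\Leb(F)+\delta$). The matching \emph{lower} estimate $\liminf_N \card(E_N\cap K)/\card(E_N)\ge\Leb(K)$, which is what you need to control $\card(E_N\setminus K)$, requires $K$ to be Jordan measurable, or at least to have Lebesgue-negligible boundary. Theorem~\ref{ArturJairo} only hands you an arbitrary compact set: if, say, $K$ is a suitable fat-Cantor-type set of measure close to $1$, it can avoid all grid points entirely, so the ratio $\card(E_N\setminus K)/\card(E_N)$ can equal $1$ for every $N$ even though $\Leb(K^c)<\varepsilon$. (Your other equidistribution use, on the neighbourhood $V_{\eta}(f^m(K))$, is unaffected: the closed $\eta$-neighbourhood of a compact set is compact, and you only need the $\limsup$ inequality there.)

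The paper repairs this by first replacing $K$ by the open set $O=f^{-m}(O')$, where $O'\supset f^m(K)$ is open with $\Leb(O')<\varepsilon$, so that $O\supset K$, $\Leb(O)>1-\varepsilon$, and $\Leb(f^m(O))<\varepsilon$; and then by approximating $O$ from inside by a finite union $U$ of dyadic cubes with $\Leb(U)>1-\varepsilon$. The set $U$ is Jordan measurable (a finite union of cubes), so $\card(E_N\cap U)/\card(E_N)\to\Leb(U)$, and $f^m(U)$ is an open set with smooth boundary whose neighbourhoods are likewise Jordan measurable. Once you interpose this approximation step, the rest of your argument---the Lipschitz estimate for $d\bigl((f_N)^m(x),f^m(x)\bigr)$, the inclusion of $(f_N)^m(E_N\cap U)$ in a small neighbourhood of $f^m(U)$, and the counting---goes through exactly as you wrote it.
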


The same statement holds for generic $C^1$ expanding maps.

\begin{proof}[Proof of Corollary~\ref{CoroCoroArturJairo}]
The proof of this corollary mainly consists in stating which good properties the compact set $K$ of Theorem~\ref{ArturJairo} can be supposed to possess. Thus, for $f$ a generic diffeomorphism and $\varep>0$, there exists $m>0$ and a compact set $K$ such that $\Leb(K) > 1-\varep$ and $\Leb(f^m(K))<\varep$.

First of all, it can be easily seen that Theorem~\ref{ArturJairo} is still true when the compact set $K$ is replaced by an open set $O$: simply consider an open set $O'\supset f^m(K)$ such that $\Leb(O')<\varep$ (by regularity of the measure) and set $O = f^{-m}(O') \supset K$. We then approach the set $O$ by unions of dyadic cubes of $\T^n$: we define the cubes of order $2^M$ on $\T^n$
\[C_{M,i} = \prod_{j=1}^n\left[\frac{i_j}{2^M},\frac{i_j+1}{2^M} \right],\]
and set\label{pageCubes}
\[U_M = \operatorname{Int}\left(\overline{\bigcup_{C_{M,i} \subset O} C_{M,i} }\right),\]
where $\operatorname{Int}$ denotes the interior. Then, the union $\bigcup_{M\in\N} U_M$ is increasing in $M$ and we have $\bigcup_{M\in\N} U_M = O$. In particular, there exists $M_0\in\N$ such that $\Leb(U_{M_0})>1-\varep$, and as $U_{M_0}\subset O$, we also have $\Leb(f^m(U_{M_0}))<\varep$. We denote $U = U_{M_0}$. Finally, as $U$ is a finite union of cubes, and as $f$ is a diffeomorphism, there exists $\delta>0$ such that the measure of the $\delta$-neighbourhood of $f^m(U)$ is smaller than $\varep$. We denote by $V$ this $\delta$-neighbourhood.

As $U$ is a finite union of cubes, there exists $N_0\in\N$ such that if $N\ge N_0$, then the proportion of points of $E_N$ which belong to $U$ is bigger than $1-2\varep$, and the proportion of points of $E_N$ which belong to $V$ is smaller than $2\varep$. Moreover, if $N_0$ is large enough, then for every $N\ge N_0$, and for every $x_N\in E_N\cap U$, we have $f_N^m(x_N)\in V$. This implies that
\[\frac{\card(f_N^m(E_N))}{\card(E_N)}\le 4\varep,\]
which proves the corollary.
\end{proof}

\section{Asymptotic rate of injectivity of a generic expanding map}

In this section, we prove that the asymptotic rate of injectivity of a generic expanding map is equal to 0. Note that a local version of this result was already obtained by P.P.~Flockermann in his thesis (Corollary~2 page 69 and Corollary~3 page 71 of \cite{Flocker}), stating that for a generic $C^{1+\alpha}$ expanding map $f$ of the circle, the ``local asymptotic rate of injectivity'' is equal to 0 almost everywhere. Some of his arguments will be used in this section. Note also that in $C^1$ regularity, the equality $\tau^\infty(f) = 0$ for a generic $f$ is a consequence of Theorem~\ref{ArturJairo} of A.~Avila and J.~Bochi (see also Corollary~\ref{CoroCoroArturJairo}); the same theorem even proves that the asymptotic rate of injectivity of a generic $C^1$ \emph{endomorphism} of the circle is equal to 0.

\begin{definition}
We define $Z_m$ as the number of children at the $m$-th generation in $G_{f,y}$ (see Definition~\ref{DefTreeMap}).
\end{definition}

\begin{prop}\label{PropTruc}
For every $r\in]1,+\infty]$, for every $f\in \mathcal D^r(\Sp^1)$ and every $y\in\Sp^1$, we have
\[\mathbf{P}(Z_m>0) \underset{m\to+\infty}{\longrightarrow}0.\]
Equivalently,
\[\overline D\big( (\det Df_x^{-1})_{\begin{subarray}{l} 1\le m\le k \\ x\in f^{-m}(y)\end{subarray}}\big) \underset{k\to+\infty}{\longrightarrow}0.\]
\end{prop}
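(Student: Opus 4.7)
The plan is to recast the percolation event $\{Z_m > 0\}$ as the fixed-point equation of a monotone non-linear operator, and then combine a Bonferroni inequality with the Perron-Frobenius theory of the transfer operator to force the limit to vanish.

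First I will set $q_m(y) = \mathbf{P}(Z_m > 0)$ and, by conditioning on the retention of the $d$ edges emanating from $y$ in $G_{f,y}$ and using that sub-trees rooted at different preimages of $y$ involve disjoint sets of edges (hence independent events), derive the functional recursion
\[
q_m(y) = 1 - \prod_{x \in f^{-1}(y)}\left(1 - \frac{q_{m-1}(x)}{|f'(x)|}\right), \qquad q_0 \equiv 1.
\]
Survival to depth $m$ implies survival to any smaller depth, so $(q_m)_m$ is pointwise decreasing; moreover each $q_m$ is continuous in $y$, being a finite product involving continuous selections of the $d$ inverse branches. The pointwise limit $q_\infty := \lim_m q_m$ is therefore upper semicontinuous and is a fixed point of the operator $T$ defined by the right-hand side of the recursion.

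Next I will apply the elementary Bonferroni inequality $1 - \prod(1 - a_i) \leq \sum a_i$ to get the pointwise estimate $q_\infty \leq \mathcal{L}q_\infty$, where $\mathcal{L}g(y) := \sum_{x \in f^{-1}(y)} g(x)/|f'(x)|$ is the Perron-Frobenius transfer operator relative to Lebesgue measure. The change-of-variables identity $\int \mathcal{L}g \,\ud\Leb = \int g \,\ud\Leb$ upgrades this pointwise inequality to equality in $L^1$, so $q_\infty = \mathcal{L}q_\infty$ almost everywhere. Combined with the fixed-point equation $q_\infty = Tq_\infty$, the Bonferroni bound is thus saturated at a.e.\ $y$; but for independent events, saturation of $\mathbf{P}(\cup A_i) = \sum \mathbf{P}(A_i)$ occurs precisely when at most one of the $\mathbf{P}(A_i)$ is positive. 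Applied to the events $A_x = \{\text{edge }(y,x)\text{ kept and subtree of }x\text{ survives}\}$, this yields the key structural fact: for almost every $y \in \Sp^1$, at most one preimage $x \in f^{-1}(y)$ satisfies $q_\infty(x) > 0$.

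Finally I will invoke Perron-Frobenius theory for $\mathcal{L}$, which for $C^r$ expanding maps of the circle with $r > 1$ yields (via Lasota-Yorke inequalities) quasi-compactness of $\mathcal{L}$ on a suitable Banach space of regular functions (bounded variation or H\"older), and in particular simplicity of the leading eigenvalue $1$ with strictly positive eigenfunction $h$, the density of the unique absolutely continuous invariant probability measure. Since $q_\infty \in L^\infty$ satisfies $q_\infty = \mathcal{L}q_\infty$ a.e., it must equal $ch$ a.e.\ for some $c \geq 0$; but if $c > 0$ then $q_\infty > 0$ almost everywhere, which combined with $|f^{-1}(y)| = d \geq 2$ contradicts the structural fact of the previous step. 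Hence $c = 0$, so $q_\infty = 0$ almost everywhere, and upper semicontinuity promotes this to $q_\infty \equiv 0$ on $\Sp^1$. The equivalent formulation in terms of $\overline{D}$ follows immediately since $\overline{D}((\det Df_x^{-1})_\ind) = \mathbf{P}(Z_k > 0)$ by Definition~\ref{Noel!}. The main obstacle in this plan is the spectral input: the spectral gap for $\mathcal{L}$ uses $r > 1$ essentially, and a little care is needed at any points where $|f'(x)| = 1$ (still allowed by Definition~\ref{DefExpan}) to ensure that the Lasota-Yorke estimate and the simplicity of the leading eigenvalue persist.
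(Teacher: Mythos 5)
Your approach is genuinely different from the paper's, and substantially more structural. The paper works pointwise, for a fixed $y$: it partitions $\{Z_m\}$ into three states ($Z_m=0$, $0<Z_m\le K$, $Z_m>K$), bounds $\mathbf P(Z_m>K)$ by Markov's inequality using the uniform bound $\mathbf E(Z_m)\le\Sigma_0$ from Lemma~\ref{Espoir}, observes that any surviving generation with at most $K$ nodes goes extinct in one step with probability at least $\alpha>0$, and closes with the elementary recursion $a_{m+1}\ge(1-\alpha)a_m+\alpha(1-\varep)$. You instead cast $q_m(y)=\mathbf P(Z_m>0)$ as a fixed point of the nonlinear map $T$, compare $T$ to the transfer operator $\Ll$ via Bonferroni, integrate to upgrade the pointwise inequality $q_\infty\le\Ll q_\infty$ to an a.e.\ identity, extract from the saturation of Bonferroni that at most one preimage can have $q_\infty>0$, and rule out the only Perron--Frobenius fixed point $ch$ ($c>0$) since $h>0$ and $d\ge 2$. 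Both proofs ultimately lean on the same spectral input (the uniform bound $\Sigma_0$ in the paper, uniqueness and positivity of the invariant density in yours), and both implicitly need $\inf|f'|>1$ for that input to be available, so your caveat about $|f'|=1$ is shared with the original. The core steps of your argument --- the recursion, the Bonferroni comparison with $\Ll$, the $L^1$ duality, the combinatorial fact about saturation of inclusion--exclusion for independent events, and the $L^1$-fixed-point rigidity for $\Ll$ --- are all correct and the route is attractive.

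There is, however, a genuine gap in the final step. Upper semicontinuity of $q_\infty$ says $\limsup_{z\to y}q_\infty(z)\le q_\infty(y)$; this allows $q_\infty$ to \emph{jump up} on a null set (for instance $q_\infty=c\,\1_{\{y_0\}}$ is u.s.c.\ and vanishes a.e.), so ``$q_\infty=0$ a.e.\ plus u.s.c.'' does \emph{not} imply $q_\infty\equiv 0$. Neither does the everywhere-valid fixed-point equation rescue you: it only shows that $\{q_\infty>0\}$ is contained in $f(\{q_\infty>0\})$, a property compatible with $\{q_\infty>0\}$ being, say, a periodic orbit. So as written your proof establishes $\mathbf P(Z_m>0)\to 0$ for Lebesgue-almost every $y$, whereas the proposition asserts it for \emph{every} $y\in\Sp^1$ (and the paper's pointwise extinction argument indeed delivers the stronger statement). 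In practice the a.e.\ version suffices for Corollary~\ref{Tau0Dilat} via dominated convergence, so the damage is limited, but as a proof of the proposition as stated, the last promotion step needs a different argument --- for instance, something in the spirit of the paper's pointwise Markov-chain estimate, or a uniform-in-$y$ control of the second moment of $Z_m$.
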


\begin{lemme}\label{Espoir}
The expectation of $Z_m$ satisfies
\[\E(Z_m) = (\Ll^m 1)(y),\]
where $\Ll$ is the Ruelle-Perron-Frobenius associated to $f$ and $1$ denotes the constant function equal to 1 on $\Sp^1$.
In particular, there exists a constant $\Sigma_0>0$ such that $\E(Z_m) \le \Sigma_0$ for every $m\in\N$.
\end{lemme}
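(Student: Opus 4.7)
My plan is to express $\E(Z_m)$ as a sum over preimages, recognize the summand as the probability that a given preimage is connected to the root, use the chain rule to turn this product of edge-probabilities into $|(f^m)'(x)|^{-1}$, and finally identify the resulting sum with $(\Ll^m 1)(y)$ by definition of the transfer operator. The uniform bound then reduces to a classical Lasota--Yorke type estimate.

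First, I would fix $y\in\Sp^1$ and $m\ge 1$ and, for each $x\in f^{-m}(y)$, compute the probability that $x$ lies in the connected component of the root in the random graph $G_{f,y}$ of Definition~\ref{DefTreeMap}. The unique path from the root to $x$ uses the $m$ edges $\bigl(f^{i}(x),f^{i-1}(x)\bigr)$ for $i=1,\dots,m$, and by Definition~\ref{Noel!} these edges are present independently, each with probability $\det Df_{f^{i-1}(x)}^{-1} = |f'(f^{i-1}(x))|^{-1}$ (in dimension one). Hence $x$ is reached from the root with probability
\[
\prod_{i=0}^{m-1}\frac{1}{|f'(f^{i}(x))|} \;=\; \frac{1}{|(f^m)'(x)|}
\]
by the chain rule. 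Writing $Z_m$ as the sum over $x\in f^{-m}(y)$ of the indicator that $x$ is reached and taking expectations, I obtain
\[
\E(Z_m) \;=\; \sum_{x\in f^{-m}(y)} \frac{1}{|(f^m)'(x)|}.
\]

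Second, I recall that the Ruelle--Perron--Frobenius transfer operator associated to $f$ is
\[
(\Ll g)(y) \;=\; \sum_{x\in f^{-1}(y)} \frac{g(x)}{|f'(x)|},
\]
so that a straightforward induction on $m$ gives $(\Ll^m 1)(y) = \sum_{x\in f^{-m}(y)} |(f^m)'(x)|^{-1}$, which is exactly the expression computed above. This establishes the equality $\E(Z_m) = (\Ll^m 1)(y)$.

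For the uniform bound, since $f\in\mathcal D^r(\Sp^1)$ with $r>1$, $f$ is in particular $C^{1+\alpha}$ for some $\alpha>0$, and hence enjoys bounded distortion on inverse branches of all its iterates. This classical fact yields a Lasota--Yorke type inequality showing that the sequence $(\Ll^m 1)_{m}$ is uniformly bounded in $L^\infty(\Sp^1)$; in fact $\Ll^m 1$ converges uniformly to the density of the unique absolutely continuous $f$-invariant probability measure. Setting $\Sigma_0 = \sup_{m\ge 0}\|\Ll^m 1\|_\infty$ finishes the proof. The main (and only) subtlety lies in this last step, but it is a standard application of distortion estimates for $C^{1+\alpha}$ expanding maps and does not require genericity of $f$.
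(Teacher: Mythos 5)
Your proof is correct, and for the first identity it is arguably cleaner and more to the point than what the paper actually writes down. Your direct computation of $\E(Z_m)$ — linearity of expectation over $x\in f^{-m}(y)$, then the observation that by Definition~\ref{Noel!} the $m$ edges on the path from $y$ to $x$ are independent with respective probabilities $\det Df_{f^{i-1}(x)}^{-1}$, then the chain rule collapsing the product to $|(f^m)'(x)|^{-1}$, and finally the expansion of $(\Ll^m 1)(y)$ — is exactly the right argument and only uses Definitions~\ref{Noel!} and~\ref{DefTreeMap}. The paper, by contrast, asserts that ``the first assertion of the lemma follows from the convergence of the operators $f_N^*$ towards the Ruelle--Perron--Frobenius operator'' and then states Lemma~\ref{LemRPF} about the convergence of $(f_N^*)^m(\Leb_N)$ to the measure of density $\Ll^m 1$; this is really a consistency check explaining \emph{why} the tree $G_{f,y}$ with these edge probabilities is the right probabilistic model for the discretized preimages, not a self-contained derivation of $\E(Z_m)=(\Ll^m 1)(y)$. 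Your version makes that step explicit and rigorous. For the uniform bound, you and the paper invoke the same classical fact: for a $C^r$ expanding map of the circle with $r>1$, $\Ll^m 1$ converges uniformly to the (H\"older) density of the unique absolutely continuous invariant measure, so $\sup_m\|\Ll^m 1\|_\infty<\infty$. No gaps.
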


The second part of the lemma is deduced from the first one by applying the theorem stating that for every $C^r$ expanding map $f$ of $\Sp^1$ ($r>1$), the maps $\Ll^m 1$ converge uniformly towards a Hölder map, which is the density of the unique SRB measure of $f$ (see for example \cite{MR2504311}). The first assertion of the lemma follows from the convergence of the operators $f_N^*$ acting on $\Prb$ (the space of Borel probability measures) towards the Ruelle-Perron-Frobenius operator.

\begin{definition}
The \emph{transfer operator} associated to the map $f$ (usually called Ruelle-Perron-Fro\-be\-nius operator), which acts on densities of probability measures, will be denoted by $\Ll_f$\index{$\Ll_f$}. It is defined by
\[\Ll_f \phi(y) = \sum_{x\in f^{-1}(y)} \frac{\phi(x)}{f'(x)}.\]
\end{definition}

Lemma~\ref{Espoir} follows directly from the following lemma.

\begin{lemme}\label{LemRPF}
Denoting $\Leb_N$ the uniform measure on $E_N$, for every $C^1$ expanding map of $\Sp^1$ and every fixed $m\ge 0$, we have convergence of the measures $(f_N^*)^m(\Leb_N)$ towards the measure of density $\Ll_f^m 1$ (where $1$ denotes the constant function equal to 1).
\end{lemme}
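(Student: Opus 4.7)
My approach is induction on $m$. The base case $m=0$ is the classical statement that the uniform atomic measures $\Leb_N$ on $E_N$ converge weakly to $\Leb$ (a Riemann-sum argument for continuous test functions), which coincides with $(\Ll_f^0 1)\cdot\Leb$ since $\Ll_f^0 1 = 1$. The core of the proof is then to extract a single abstract step: if a sequence $(\mu_N)$ of Borel probability measures on $\Sp^1$ converges weakly to $\mu$, then $f_N^*\mu_N$ converges weakly to $f_*\mu$. Indeed, for any $\phi\in C(\Sp^1)$,
\begin{equation*}
\int \phi\,d(f_N^*\mu_N) \;=\; \int (\phi\circ P_N \circ f)\,d\mu_N,
\end{equation*}
and because $\|P_N-\Id\|_\infty \le 1/(2N)$ while $\phi$ is uniformly continuous on the compact space $\Sp^1$, the integrand $\phi\circ P_N\circ f$ converges uniformly to $\phi\circ f$. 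Since every $\mu_N$ is a probability measure, this gives
\begin{equation*}
\int \phi\,d(f_N^*\mu_N) \;=\; \int (\phi\circ f)\,d\mu_N + o(1),
\end{equation*}
and the weak convergence $\mu_N\to\mu$ applied to the continuous function $\phi\circ f$ yields $\int(\phi\circ f)\,d\mu_N \to \int(\phi\circ f)\,d\mu = \int\phi\,d(f_*\mu)$.

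The induction then closes immediately. Assuming that $\mu_N^{(m-1)} := (f_N^*)^{m-1}\Leb_N$ converges weakly to $(\Ll_f^{m-1}1)\cdot\Leb$, the step above applied to these measures produces weak convergence of $\mu_N^{(m)} = f_N^*\mu_N^{(m-1)}$ towards $f_*\bigl((\Ll_f^{m-1}1)\cdot\Leb\bigr)$. By the very definition of the Ruelle--Perron--Frobenius operator, one has $f_*(\psi\cdot\Leb) = (\Ll_f\psi)\cdot\Leb$ for every $\psi\in L^1(\Sp^1)$, which is nothing but the change of variables $\int\phi\circ f\cdot\psi\,d\Leb = \sum_{x\in f^{-1}(y)}\phi(y)\psi(x)/|f'(x)|\,d\Leb(y)$ summed over the branches of $f^{-1}$. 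Applied to $\psi = \Ll_f^{m-1}1$, this identifies the limit as $(\Ll_f^m 1)\cdot\Leb$, completing the induction.

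The only subtle point is that $f_N^*$ is not continuous on the space of probability measures, so one cannot invoke weak continuity of the pushforward directly on $(f_N^*)^m \Leb_N$. The uniform bound $\|P_N-\Id\|_\infty\le 1/(2N)$ precisely bridges the gap between $f_N^*$ and $f_*$ at each step, and this is the sole place where a genuine estimate enters. Observe incidentally that the expanding hypothesis is used only implicitly, through the existence of the transfer operator acting on $L^1$; the weak convergence statement itself requires merely that $f$ be a continuous endomorphism of $\Sp^1$, so the lemma holds in considerably greater generality than it is stated.
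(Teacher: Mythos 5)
Your proof is correct, and it is a cleaner and more fully articulated version of what the paper only sketches. The paper's argument works at the level of intervals and densities: it applies the mean value theorem to control $\Leb(f(I))/f'(x_0)$ on small segments $I$ and compares $\Leb(J)$ with $\card(J\cap E_N)/\card(E_N)$, then asserts that ``the same kind of arguments holds in arbitrary times.'' You instead isolate a single reusable lemma --- if $\mu_N$ is supported on $E_N$ and $\mu_N\to\mu$ weakly-*, then $f_N^*\mu_N\to f_*\mu$ weakly-* --- proved by the uniform bound $\|P_N-\Id\|_\infty\le 1/(2N)$ and the uniform continuity of $\phi$ on the compact circle, and then you close the recursion in one line using $f_*(\psi\cdot\Leb)=(\Ll_f\psi)\cdot\Leb$. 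The core mechanism (the projection $P_N$ being uniformly close to the identity) is the same in both; what your packaging buys is a genuine induction with a clean invariant rather than a ``the same estimates work again'' hand-wave, and it makes transparent exactly where the round-off error enters (once per iterate, with a bound depending only on the modulus of continuity of the test function, not on $m$).

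One small caveat about the closing remark: you state that the lemma ``requires merely that $f$ be a continuous endomorphism of $\Sp^1$.'' That is a slight overreach. The weak-convergence preservation step indeed needs only continuity of $f$, but the identity $f_*(\psi\cdot\Leb)=(\Ll_f\psi)\cdot\Leb$ on which the induction lands requires $\Ll_f$ to be defined, hence $f$ to be (at least) a $C^1$ local diffeomorphism with finitely many preimages; a merely continuous endomorphism has no transfer operator in this sense. The honest generalization is to $C^1$ covering maps of the circle rather than to all continuous endomorphisms.
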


The proof of this lemma is straightforward but quite long. We sketch here this proof, the reader will find a complete proof using generating functions in Section~3.4 of \cite{Flocker} and a quantitative version of it in Section~12.2 of \cite{Guih-These}.

\begin{proof}[Sketch of proof of Lemma~\ref{LemRPF}]
As $f$ is $C^1$, by the mean value theorem, for every segment $I$ small enough, we have
\[\left| \Leb(I) - \frac{\Leb(f(I))}{f'(x_0)}\right| \le \varep.\]
Moreover, for every interval $J$, 
\[\left|\Leb(J) - \frac{\card(J\cap E_N)}{\card(E_N)}\right| \le \frac{1}{N}.\]
These two inequalities allows to prove the local convergence of the measures $f_N^*(\lambda_N)$ towards the measure with density $\Ll_f 1$. The same kind of arguments holds in arbitrary times, and allows to prove the lemma.
\end{proof}

\begin{proof}[Proof of Proposition~\ref{PropTruc}]
We fix $\varep>0$, and set $K=\lceil\Sigma_0/\varep\rceil$ (the constant $\Sigma_0$ being given by Lemma~\ref{Espoir}) and
\[a_m = \mathbf{P}(Z_m = 0)\,,\quad  b_m = \mathbf{P}(0 < Z_m\le K)\,,\quad  c_m = \mathbf{P}(Z_m > K).\]

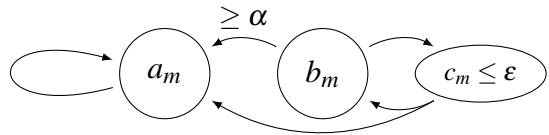
\begin{figure}[ht]
\begin{center}
\begin{tikzpicture}[scale=.7]
\node[draw,ellipse] (C) at (6,0) {\small$c_m\le \varep$};
\node[draw,circle,minimum height=1.2cm] (B) at (3,0) {\large $b_m$};
\node[draw,circle,minimum height=1.2cm] (A) at (0,0) {\large $a_m$};
\draw[->,>=latex,shorten >=3pt, shorten <=3pt] (C) to[bend left] (B);
\draw[->,>=latex,shorten >=3pt, shorten <=3pt] (B) to[bend left] (C);
\draw[->,>=latex,shorten >=3pt, shorten <=3pt] (C) to[bend left] (A);
\draw[->,>=latex,shorten >=3pt, shorten <=3pt] (B) to[bend right] node[midway, above]{$\ge \alpha$} (A) ;
\draw[->,>=latex,shorten >=3pt, shorten <=3pt] (A) to[loop left,looseness=16,min distance=10mm] (A);
\end{tikzpicture}
\end{center}
\caption{Transition graph for $Z_m$: $a_m = \mathbf{P}(Z_m = 0)$, $b_m = \mathbf{P}(0 < Z_m\le K)$, $c_m = \mathbf{P}(Z_m > K)$.}\label{GraphTransZ}
\end{figure}

\noindent We want to prove that the sequence $(a_m)_{m\in\N}$ tends to 1.

Of course, $a_m+b_m+c_m=1$ for any $m$. A generation with less than $K$ children will give birth to zero child with positive probability: we have
\[\mathbf{P}(Z_{m+1}=0 \mid 0 < Z_m\le K) \ge \left( 1 - \frac{1}{\|f'\|_\infty}\right)^{dK}.\]
In other words, setting $\alpha = ( 1 - \|f'\|_\infty^{-1})^{dK}$, we get $a_{m+1} \ge a_m + \alpha b_m$ (see also Figure~\ref{GraphTransZ}).

Furthermore, by Markov inequality and Lemma~\ref{Espoir}, we have
\[\mathbf{P}(Z_m\ge \Sigma_0/\varep) \le \varep,\]
so $c_m\le\varep$.

In summary, we have
\[\left\{\begin{array}{l}
a_m + b_m + c_m = 1\\
c_m\le \varep\\
a_{m+1} \ge a_m + \alpha b_m.
\end{array}\right.\]
This leads to $a_{m+1}\ge (1-\alpha) a_m + \alpha(1-\varep)$, which implies that $\liminf a_m \ge 1-\varep$. As this holds for any $\varep>0$, we get that $\lim a_m = 1$.
\end{proof}

\begin{coro}\label{Tau0Dilat}
For any $r\in ]1,+\infty]$, a generic map $f\in \mathcal D^r(\Sp^1)$ satisfies $\tau^\infty(f) = 0$. In particular, $\lim_{N\to +\infty} \Dr(f_N) = 0$.
\end{coro}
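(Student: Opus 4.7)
The plan is to combine the local--global formula of Theorem~\ref{TauxExpand} with the pointwise decay established in Proposition~\ref{PropTruc}, and then transfer the resulting statement on the asymptotic rate of injectivity to the degree of recurrence via the general inequality~\eqref{intervLim}. I will work under the hypothesis $r>1$ required by Proposition~\ref{PropTruc}; the case $r=1$ is already covered by Corollary~\ref{CoroCoroArturJairo}.

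First, I would invoke Theorem~\ref{TauxExpand} to obtain, for each $k\in\N$, a dense $G_\delta$ subset $\mathcal G_k\subset \mathcal D^r(\Sp^1)$ on which
\[\tau^k(f) \;=\; \int_{\Sp^1} \overline D\bigl((\det Df_x^{-1})_{\begin{subarray}{l} 1\le m\le k \\ x\in f^{-m}(y)\end{subarray}}\bigr)\ud \Leb(y).\]
Setting $\mathcal G=\bigcap_{k\ge 1}\mathcal G_k$ gives a dense $G_\delta$ (the space $\mathcal D^r(\Sp^1)$ being a Baire space) on which the local--global identity holds simultaneously for every $k$. This simultaneous validity is the only small subtlety in the setup.

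Next, for any fixed $f\in\mathcal G$, Proposition~\ref{PropTruc} tells us that for every $y\in\Sp^1$ the integrand $\overline D\bigl((\det Df_x^{-1})_{1\le m\le k,\,x\in f^{-m}(y)}\bigr)$ converges to $0$ as $k\to+\infty$. Since this integrand is a density, it takes values in $[0,1]$, so I can apply the dominated convergence theorem on $\Sp^1$ (equipped with Lebesgue measure, of total mass $1$) to conclude that $\tau^k(f)\to 0$. As the sequence $\bigl(\tau^k(f)\bigr)_k$ is decreasing, this forces $\tau^\infty(f)=\lim_{k\to+\infty}\tau^k(f)=0$, establishing the first assertion of the corollary on the generic set $\mathcal G$.

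The second assertion then falls out of inequality~\eqref{intervLim}: for $f\in\mathcal G$ one has $\limsup_{N\to+\infty} D(f_N)\le \tau^\infty(f)=0$, and since $D(f_N)\ge 0$ this gives $\lim_{N\to+\infty} D(f_N)=0$. The substantive mathematical work has all been done upstream --- the pointwise decay in Proposition~\ref{PropTruc} (via the Ruelle--Perron--Frobenius bound from Lemma~\ref{Espoir}, which is precisely where the regularity hypothesis $r>1$ enters) and the local--global identity of Theorem~\ref{TauxExpand}. At this stage the only ``hard part'' is checking that both ingredients apply on the same dense $G_\delta$, which is immediate.
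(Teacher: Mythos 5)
Your proof is correct and follows exactly the paper's own route: the local--global formula of Theorem~\ref{TauxExpand}, the pointwise decay of Proposition~\ref{PropTruc}, and dominated convergence, followed by inequality~\eqref{intervLim} to pass from $\tau^\infty(f)=0$ to $\lim_{N}D(f_N)=0$. The only additions are bookkeeping the paper leaves implicit --- the intersection of the dense $G_\delta$'s over $k$, and the remark that the case $r=1$ falls back on Corollary~\ref{CoroCoroArturJairo} --- both of which are appropriate and correct.
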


\begin{proof}[Proof of Corollary~\ref{Tau0Dilat}]
It is an easy consequence of the local-global formula (Theorem~\ref{TauxExpand}), Proposition~\ref{PropTruc} and the dominated convergence theorem.
\end{proof}

\section{Numerical simulations}

\begin{figure}[ht]
\begin{center}
\includegraphics[width=.5\linewidth,trim = .5cm .3cm .6cm .1cm,clip]{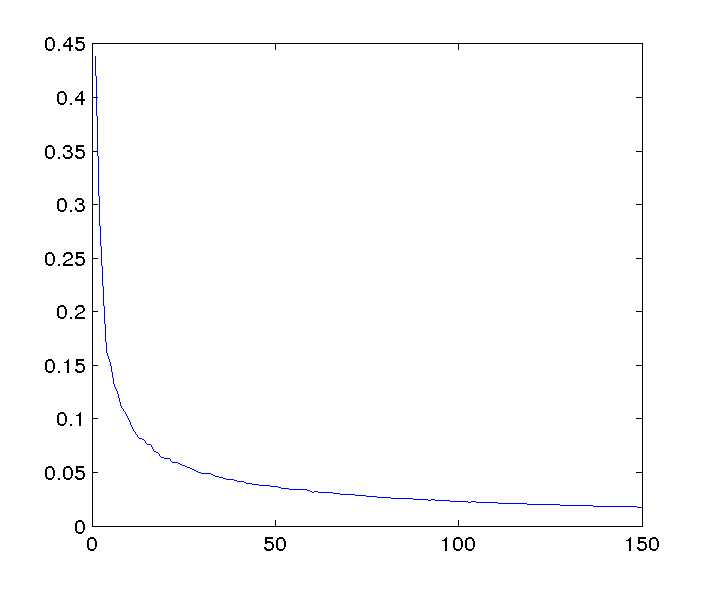}
\caption[Simulation of the degree of recurrence of $f_N$]{Simulation of the degree of recurrence $D(f_N)$ of the conservative diffeomorphism $f$, depending on $N$, on the grids $E_N$ with $N=128k$, $k=1,\cdots,150$.}\label{GrafDnDiffeoCons}
\end{center}
\end{figure}

\begin{figure}[ht]
\begin{center}
	\includegraphics[width=.5\linewidth,trim = .5cm .3cm .6cm .1cm,clip]{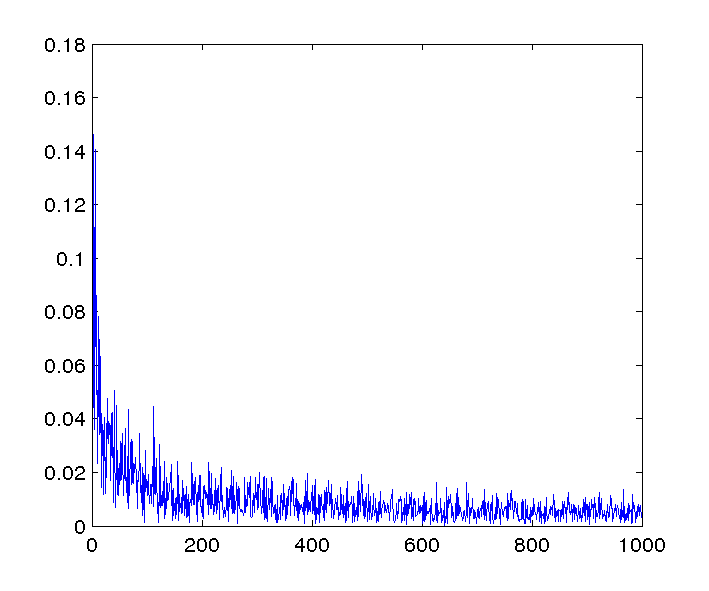}
\caption[Simulation of the degree of recurrence of $f_N$]{Simulation of the degree of recurrence $D(g_N)$ of the expanding map $g$, depending on $N$, on the grids $E_N$ with $N=128k$, $k=1,\cdots,1\,000$.}\label{GrafDnExp}
\end{center}
\end{figure}

We have computed numerically the degree of recurrence of a diffeomorphism $f$, which is $C^1$-close to $\Id$. It is defined by $f = Q\circ P$, with
\[P(x,y) = \big(x,y+p(x)\big)\quad\text{and}\quad Q(x,y) = \big(x+q(y),y\big),\]
\[p(x) = \frac{1}{209}\cos(2\pi\times 17x)+\frac{1}{271}\sin(2\pi\times 27x)-\frac{1}{703}\cos(2\pi\times 35x),\]
\[q(y) = \frac{1}{287}\cos(2\pi\times 15y)+\frac{1}{203}\sin(2\pi\times 27y)-\frac{1}{841}\sin(2\pi\times 38y).\]

On Figure~\ref{GrafDnDiffeoCons}, we have represented graphically the quantity $D(f_{128k})$ for $k$ from 1 to $150$. It appears that, as predicted by Theorem~\ref{limiteEgalZero}, this degree of recurrence goes to 0. In fact, it is even decreasing, and converges quite fast to 0: as soon as $N=128$, the degree of recurrence is smaller than $1/2$, and if $N\gtrsim 1000$, then $D(f_N) \le 1/10$. Note that, contrary to what is predicted by theory, this phenomenon was already observed for examples of conservative homeomorphisms of the torus which have big enough derivatives not to be considered as ``typical from the $C^1$ case'' (see \cite{Guih-discr}).
\bigskip

We also present the results of the numerical simulation we have conducted for the degree of recurrence of the expanding map of the circle $g$, defined by
\[g(x) = 2x + \varep_1 \cos(2\pi x) + \varep_2 \sin(6\pi x),\]
with $\varep_1 = 0.127\,943\,563\,72$ and $\varep_2 = 0.008\,247\,359\,61$.

On Figure~\ref{GrafDnExp}, we have represented the quantity $D(g_{128k})$ for $k$ from 1 to $1\,000$. It appears that, as predicted by Corollary~\ref{Tau0Dilat}, this degree of recurrence seems to tend to 0. In fact, it is even decreasing, and converges quite fast to 0: as soon as $N=128$, the degree of recurrence is smaller than $1/5$, and if $N\gtrsim 25\,000$, then $D(g_N) \le 1/50$.

\appendix 
\section*{Appendix}
\section{A more general setting where theorems remain true}\label{AddendSett}

Here, we give weaker assumptions under which the theorems of this paper are still true: the framework ``torus $\T^n$ with grids $E_N$ and Lebesgue measure'' could be seen as a little too restrictive.
\bigskip

So, we take a compact smooth manifold $M$ (possibly with boundary) and choose a partition $M_1,\cdots,M_k$ of $M$ into closed sets\footnote{That is, $\bigcup_i M_i = M$, and for $i\neq j$, the intersection between the interiors of $M_i$ and $M_j$ are empty.} with smooth boundaries, such that for every $i$, there exists a chart $\varphi_i : M_i\to \R^n$. We endow $\R^n$ with the euclidean distance, which defines a distance on $M$ \emph{via} the charts $\phi_i$ (this distance is not necessarily continuous). From now, we study what happens on a single chart, as what happens on the neighbourhoods of the boundaries of these charts ``counts for nothing'' from the Lebesgue measure viewpoint.

Finally, we suppose that the uniform measures on the grids $E_N = \bigcup_i E_{N,i}$ converge to a smooth measure $\lambda$ on $M$ when $N$ goes to infinity.

This can be easily seen that these conditions are sufficient for Corollary~\ref{CoroCoroArturJairo} to be still true.
\bigskip

For the rest of the statements of this paper, we need that the grids behave locally as the canonical grids on the torus.

For every $i$, we choose a sequence $(\kappa_{N,i})_N$ of positive real numbers such that $\kappa_{N,i}\underset{N\to +\infty}{\longrightarrow} 0$. This defines a sequence $E_{N,i}$ of grids on the set $M_i$ by $E_{N,i} = \varphi_i^{-1} (\kappa_{N,i}\Z^n)$. Also, the canonical projection $\pi : \R^n\to \Z^n$ (see Definition~\ref{DefDiscrLin}) allows to define the projection $\pi_{N,i}$, defined as the projection on $\kappa_{N,i}\Z^n$ in the coordinates given by $\varphi_i$:
\[\begin{array}{rcl}
\pi_{N,i} : & M_i & \longrightarrow E_{N,i}\\
            & x   & \longmapsto \varphi_i^{-1}\Big(\kappa_{N,i}\pi\big(\kappa_{N,i}^{-1}\varphi_i(x)\big)\Big).
\end{array}\]

We easily check that under these conditions, Theorems~\ref{convBisMieux}, \ref{TauxExpand} and \ref{limiteEgalZero} and Corollary~\ref{Tau0Dilat} are still true, that is if we replace the torus $\T^n$ by $M$, the uniform grids by the grids $E_N$, the canonical projections by the projections $\pi_{N,i}$, and Lebesgue measure by the measure $\lambda$.

\section*{Acknowledgments} 

I would like to warmly thank all of those who helped me during these researches, in particular the anonymous referees, Yves Meyer, who made me discover model sets, and Fran{\c c}ois B{\'e}guin for constant support and numerous proofreadings. Many thanks to Fran{\c c}ois Petit for reference \cite{MR0217337}.


\bibliographystyle{amsplain}
\providecommand{\MR}[1]{}
\providecommand{\bysame}{\leavevmode\hbox to3em{\hrulefill}\thinspace}
\providecommand{\MR}{\relax\ifhmode\unskip\space\fi MR }
\providecommand{\MRhref}[2]{%
  \href{http://www.ams.org/mathscinet-getitem?mr=#1}{#2}
}
\providecommand{\href}[2]{#2}


\begin{dajauthors}
\begin{authorinfo}[pag]
  Pierre-Antoine Guih{\'e}neuf\\
  Universit\'e Paris-Sud 11 / Universidade Federal Fluminense (Niter{\`o}i)\\
  Current address: IMJ-PRG, 4 place Jussieu, case 247, 75252 Paris Cedex 05, France\\
  pierre-antoine.guiheneuf \imageat{}imj-prg\imagedot{} fr\\
  \url{https://webusers.imj-prg.fr/~pierre-antoine.guiheneuf/}
\end{authorinfo}

\end{dajauthors}

\end{document}